\documentclass{article}
\usepackage[utf8]{inputenc}				
\usepackage{lmodern} 
\usepackage{microtype}
\usepackage[numbers,square,sort&compress]{natbib}

\usepackage{caption}
\usepackage{subcaption}
\usepackage{enumitem}
\usepackage{amsmath}
\usepackage{amssymb}
\usepackage{bbm}
\usepackage{verbatim}
\usepackage{graphicx,epstopdf}
\usepackage{array}
\usepackage{tikz-cd}
\usepackage{xfrac}
\usepackage{faktor}

\usepackage{braket,amsfonts}
\usepackage{multirow}
\usepackage{booktabs}
\usepackage{graphbox}
\usepackage{overpic}
\usepackage{algorithm}
\usepackage{algorithmic}

\usepackage{graphicx}
\usepackage{amsmath,amsfonts,amsthm}
\usepackage{amssymb,latexsym}
\usepackage{fixmath}
\usepackage{mathrsfs,amsbsy}
\usepackage{dsfont}
\usepackage{enumerate}
\usepackage{algorithm,algorithmic}
\usepackage{kbordermatrix}
\usepackage{xcolor}
\usepackage{caption}
\usepackage{subcaption}
\usepackage{enumitem}

\date{today}
\overfullrule = 0pt
\textheight200mm                                                                                                                                                                                                               
\textwidth155mm \hoffset-1.2cm

\numberwithin{equation}{section}

\newtheorem{theorem}{Theorem}[section]
\newtheorem{proposition}{Proposition}[section]
\newtheorem{lemma}[theorem]{Lemma}

\newtheorem{remark}{Remark}
\newtheorem{example}{Example} 

\newcommand{\bw}{\textbf w}

\title{On Novel Fixed-Point-Type Iterations with Structure-Preserving Doubling Algorithms for Stochastic Continuous-time Algebraic Riccati equations}
\author{Tsung-Ming Huang\thanks{ Department of Mathematics,  National Taiwan Normal
    University, Taipei 116, Taiwan  ({\tt min@ntnu.edu.tw}).} \and
    Yueh-Cheng Kuo\thanks{Department
    of Applied Mathematics, National University of Kaohsiung,
    Kaohsiung 811, Taiwan ({\tt yckuo@nuk.edu.tw}).} \and
    Ren-Cang Li\thanks{ Department of Mathematics, 
    University of Texas at Arlington, Arlington, USA ({\tt rcli@uta.edu}).} \and
    Wen-Wei Lin\thanks{ Nanjing Center for Applied Mathematics,   
Nanjing, China; Department of Applied Mathematics, National Yang Ming Chiao Tung University, Hsinchu 300, Taiwan. ({\tt wwlin@math.nctu.edu.tw}).}}
\date{\today}
\begin{document}
\maketitle
\begin{abstract}
    In this paper we mainly propose efficient and reliable numerical algorithms for solving stochastic continuous-time algebraic Riccati equations (SCARE) typically arising from the differential state-dependent Riccati equation technique from the 3D missile/target engagement, the F16 aircraft flight control and the quadrotor optimal control etc. 
To this end, we develop a fixed point (FP)-type iteration with solving a CARE by the structure-preserving doubling algorithm (SDA) at each iterative step, called FP-CARE\_SDA. We prove that either the FP-CARE\_SDA is monotonically nondecreasing or nonincreasing, and is R-linearly convergent, with the zero initial matrix or a special initial matrix satisfying some assumptions. 
The FP-CARE\_SDA (FPC) algorithm can be regarded as a robust initial step to produce a good initial matrix, and then the modified Newton (mNT) method can be used by solving the corresponding Lyapunov equation with SDA (FPC-mNT-Lyap\_SDA). Numerical experiments show that the FPC-mNT-Lyap\_SDA algorithm outperforms the other existing algorithms.
\end{abstract}

\section{Introduction}
The nonlinear dynamics of the stochastic state-dependent control system in continuous-time subject to multiplicative white noises can be described as
\begin{align}
    d x(t) = A(x) x + B(x) u + \sum_{i=1}^r (A_0^i(x) x + B_0^i(x)u) d w_i(t), \label{eq:dyna_SSDC}
\end{align}
wher $A(x)$, $A_0^i(x) \in \mathbb{R}^{n \times n}$, $B(x)$, $B_0^i(x) \in \mathbb{R}^{n \times m}$ for $i = 1, \ldots, r$, $x(t)$ and $u(t)$ are the state and the control input, $w(t) = [ w_1(t), \cdots, w_r(t) ]^{\top}$ is a standard Wiener process satisfying that each $w_i(t)$ is a standard Brownian motion.
Under the nonlinear dynamical system \eqref{eq:dyna_SSDC}, we consider the cost functional with respect to the control $u(t)$ with a given initial $x_0$
\begin{align}
    J(t_0, x_0; u) = E \left\{ \int_{t_0}^{\infty} \begin{bmatrix} x \\ u \end{bmatrix}^{\top} \begin{bmatrix}
        Q(x) & L(x) \\ L^{\top}(x) & R(x)
    \end{bmatrix} \begin{bmatrix}
        x \\ u
    \end{bmatrix} dt\right\}, \label{eq:cost_fun}
\end{align}
in which $Q(x) \in \mathbb{R}^{n \times n}$, $L(x) \in \mathbb{R}^{m \times n}$, and $R(x) > 0 \in \mathbb{R}^{m \times m}$ with $Q(x) - L(x) R(x)^{-1} L(x)^{\top} \geq 0$.

Recently, the state-dependent Riccati equation (SDRE) \cite{base:2020} generalizes the well-known linear quadratic regulator (LQR) \cite{meccr:2019} and attacks broad attention in nonlinear optimal controls \cite{poli:2020, kikw:2017, neko:2019}.
The SDRE scheme manifests state and control weighting functions to ameliorate the overall performance \cite{Cime:2012}, as well as, capabilities and potentials of other performance merits as global asymptotic stability \cite{lixi:2019,lilc:2018,behe:2018}. 
In practical applications such as the differential SDRE with impact angle guidance strategies \cite{lwhxwl:2023,nakm:2021a} models a 3D pursuer/target trajecting tracking or interception engagement, finite-time SDRE for F16 aircraft flight controls \cite{cpws:2022}, SDRE optimal control design for quadrotors for enhancing the robustness against unmodeled disturbances \cite{chhu:2022}, and position/velocity controls for a high-speed vehicle \cite{abbe:1999a}. 
However, these application problems are real-world manipulation, the stochastic SDRE (SSDRE) should indispensably be considered.
That is, the goal in SSDRE control is to minimize the cost function \eqref{eq:cost_fun} and compute the optimal control $u$ at each fixed/frozen state $x$.

Assume that, with $A_0 \equiv A \equiv A(x)$, $\{ A_0^i \equiv A_0^i(x), B_0^i \equiv B_0^i(x) \}_{i=1}^r$, $Q \equiv Q(x)$, $R \equiv R(x)$, $L \equiv L(x)$,
\begin{description}
    \item{(c1)} The pair $(\{ A_0^i \}_{i=0}^r, \{ B_0^i \}_{i=0}^r)$ is stabilizable, i.e., there exists $F \in \mathbb{R}^{m \times n}$ such that the linear differential equation
    \begin{align*}
        \frac{d}{dt} Z(t) = \mathcal{L}_F Z(t) \equiv (A + BF)^{\top} Z + Z (A + BF) + \sum_{i=1}^r(A_0^i + B_0^iF)^{\top} Z(A_0^i + B_0^iF)
    \end{align*}
    is exponentially stable, i.e., $e^{\mathcal{L}_FZ(t-t_0)}$ is exponentially stable; 
    \item{(c2)} The pair $( \{ A_0^i \}_{i=0}^r, C)$ is detectable with $C^{\top} C = Q - L R^{-1} L^{\top}$, that is, $( \{ A_0^{i\top} \}_{i=0}^r, \{ C_i \}_{i=0}^r)$ is stabilizable with $C_0 = C$ and $\{ C_i \equiv 0 \}_{i=1}^r$.
\end{description}

For a fixed state $x$, if both (c1) and (c2) hold, then the stochastic continuous-time algebraic Riccati equation (SCARE) corresponding to \eqref{eq:dyna_SSDC} and \eqref{eq:cost_fun}
%
\begin{align}
     & A^{\top} X + X A + \sum_{i=1}^r A_0^{i\top} X A_0^i + Q \nonumber \\
     & - \left( X B + \sum_{i=1}^r A_0^{i\top} X B_0^i + L \right) 
      \left( \sum_{i=1}^r B_0^{i\top} X B_0^i + R\right)^{-1} 
     \left( B^{\top} X + \sum_{i=1}^r B_0^{i\top} X A_0^i + L^{\top} \right) = 0, \label{eq:SCARE_gen}
\end{align}
has a unique positive semi-definite (PSD) stabilizing solution $X_* \equiv X_*(x)$ \cite{drms:2013} such that the system $(A(x) + B(x) F_{X_*},A_0^1(x) + B_0^1(x) F_{X_*}, \cdots, A_0^r(x) + B_0^r(x) F_{X_*})$ is stable with 
\begin{align}
    F_{X_*} = - \left( \sum_{i=1}^r B_0^i(x)^{\top} X_* B_0^i(x) + R(x) \right)^{-1} \left( B(x)^{\top} X_* + \sum_{i=1}^r B_0^i(x)^{\top} X_* A_0^i(x) + L(x)^{\top}\right). \label{eq:F_X*}
\end{align}
In fact, $X_*$ is a stabilizing solution if and only if the closed-loop system
\begin{align*}
    d x(t) = ( A(x) + B(x) F_{X_*}) x + \sum_{i=1}^r ( A_0^i(x) x + B_0^i(x) F_{X_*}) d w_i(t)
\end{align*}
is exponentially stable. 

For convenience for discussion of convergence, the SCARE \eqref{eq:SCARE_gen} can be rewritten as 
\begin{multline}\label{eq:Stoc-ARE-N}
A^{\top}X+XA+Q+\Pi_{11}(X) \\
  -(XB+L+\Pi_{12}(X))(R+\Pi_{22}(X))^{-1}(XB+L+\Pi_{12}(X))^{\top}=0,
\end{multline}
where
\begin{align}\label{Pi}
\Pi(X)&\equiv \begin{bmatrix}
              \Pi_{11}(X) & \Pi_{12}(X) \\
              \Pi_{12}(X)^{\top} & \Pi_{22}(X)
            \end{bmatrix}
\equiv \begin{bmatrix}
              \sum_{i=1}^r A_0^{i\top}XA_0^i & \sum_{i=1}^r A_0^{i\top}XB_0^i \\
              \sum_{i=1}^r B_0^{i\top}XA_0^i & \sum_{i=1}^r B_0^{i\top}XB_0^i
            \end{bmatrix}\nonumber\\
         &=\begin{bmatrix}
              A_0^{1\top} &\cdots&A_0^{r\top}  \\
              B_0^{1\top} &\cdots&B_0^{r\top}  \\
            \end{bmatrix}(I_r\otimes X)  \begin{bmatrix}
              A_0^1 &B_0^1  \\
              \vdots &\vdots  \\
              A_0^r &B_0^r  \\
            \end{bmatrix}.
\end{align}
Note that if $X\geq 0$, then $\Pi(X)\geq 0$ and it also holds that
\begin{align}
      \Pi(X) \geq \Pi(Y) \geq 0\ \mbox{ for } \ X \geq Y \geq 0. \label{eq:assumption_Pi}
\end{align}

The main task of this paper is to develop an efficient, reliable, and robust algorithm for solving SCARE in \eqref{eq:SCARE_gen} or \eqref{eq:Stoc-ARE-N}. 
Several numerical methods, such as the fixed-point (FP) method \cite{guli:2023}, Newton (NT) method \cite{dahi:2001}, modified Newton (mNT) method \cite{chllw:2011,guo:2002a,ivan:2007}, structure-preserving double algorithms (SDAs) \cite{guli:2023}, have been proposed for solving the positive semi-definite (PSD) solutions for SCARE in \eqref{eq:SCARE_gen}. 
In fact, based on the FP, NT, mNT and SDA methods, we can combine these methods to propose various algorithms to efficiently solve SCARE \eqref{eq:SCARE_gen} or \eqref{eq:Stoc-ARE-N}. 
Nevertheless, factors such as the conditions for convergence, monotonic convergence, convergence speed, accuracy of residuals and computational cost of iterative algorithms can ultimately determine the effectiveness, reliability and robustness of a novel algorithm.

We consider the following four algorithms for solving SCARE.
\begin{description}
    \item{(i)} FP-CARE\_SDA: We rewrite \eqref{eq:Stoc-ARE-N} as 
\begin{align}
      \mathcal{R}(X) \equiv A_c(X)^{\top} X + X A_c(X) - X G_c(X) X + H_c(X) = 0 \label{eq:SARE_CARE_0}
\end{align}
where $A_c(X), G_c(X)$ and $H_c(X)$ are defined in \eqref{eq:mtx_Ac}, \eqref{eq:mtx_Gc} and \eqref{eq:mtx_Hc}, respectively.  We frozen $X$ in $A_c(X), G_c(X)$ and $H_c(X)$ as a fixed-point iteration and solve the CARE \eqref{eq:SARE_CARE_0} by SDA \cite{hull:2018,lixu:2006}, called the FP-CARE\_SDA.

Let $\widehat{X} \geq 0$ be a PSD solution of SCARE \eqref{eq:Stoc-ARE-N}. The sequence $\{X_k\}_{k=0}^{\infty}$ generated by FP-CARE\_SDA with $X_0=0$ is monotonically nondecreasing and R-linearly convergent to a PSD $0\le \widehat{X}_-\le \widehat{X}$ (see Theorem~\ref{thm:monotonical_increasing}). On the other hand, if $X_0\ge \widehat{X}$ such that $A_c(X_0)-G_c(X_0)X_0$ is stable, and $\mathcal{R}(X_0)\le 0$, FP-CARE\_SDA generates a monotonically nonincreasing sequence which converges R-linearly to a PSD $\widehat{X}_+\ge\widehat{X}\ge 0$ (see Theorem~\ref{thm:monotonical_decreasing}).

\item{(ii)} NT-FP-Lyap\_SDA: The SCARE \eqref{eq:Stoc-ARE-N} can be regarded as a nonlinear equation and solved by Newton's method in \eqref{eq:equ_Newton_iter_0} which has been derived by \cite{dahi:2001}
\begin{align}
    A_{X_k}^{\top} X + X A_{X_k} + \Pi_{X_k}(X) = - M_{X_k}, \label{eq:equ_Newton_iter_0}
\end{align}
where $A_{X_k}$, $\Pi_{X_k}(X)$ and $M_{X_k}$ are given in \eqref{eq:NT-AXk}, \eqref{eq:NT-PiX} and \eqref{eq:NT-MXk}, respectively. To solve the Newton's step \eqref{eq:equ_Newton_iter_0}, we frozen $X$ in $\Pi_{X_k}(X)$ as a fixed-point iteration and solve the associated Lyapunov equation by the special L-SDA in Algorithm~\ref{alg:Lyapunov_SDA}.

Let $\widehat{X} \geq 0$ be a PSD solution of SCARE \eqref{eq:Stoc-ARE-N}. If there is a $X_0 \geq \widehat{X}$ such that $\mathcal{R}_{X_0}'$ is stable, then $\{ X_k \}_{k=0}^{\infty}$ generated by NT-FP-Lyap\_SDA is monotonically nonincreasing and quadratically convergent to a PSD $\widehat{X}_+\ge \widehat{X}$ (see \cite{dahi:2001}). Here $\mathcal{R}_{X_0}'$ denotes the Fr\`{e}chet partial derivative with respect to $X_0$.
\item{(iii)} mNT-FP-Lyap\_SDA: The method NT-FP-Lyap\_SDA proposed in (ii) can be accelerated by the modified NT (mNT) step \cite{guo:2002a}. That is, the Lyapunov equation \eqref{eq:equ_Newton_iter_0} with the frozen term $\Pi_{X_k}(X)$ only solved once for approximating the modified Newton step:
\begin{align*}
    A_{X_k}^{\top} X + X A_{X_k} = - \Pi_{X_k}(X_k) - M_{X_k}. 
\end{align*}
If $X_0\ge \widehat{X}$ with $\mathcal{R}(X_0)\le 0$ such that $A_0-G_0X_0$ is stable, and $X_k\ge 0$ for all $k\ge 0$ generated by mNT-FP-Lyap\_SDA algorithm, $\{X_k\}_{k=1}^{\infty}$ is monotonically nonincreasing and converges to a PSD $\widehat{X}_+\ge \widehat{X}\ge 0$ (see Theorem~\ref{thm:monotonical_decreasing_mNT}).
\item{(iv)} FP: In \cite{guli:2023}, the authors used the M\"{o}bius transformation to transform the SCARE \eqref{eq:Stoc-ARE-N} into a stochastic discrete-time algebraic Riccati equation (SDARE) and proposed following fixed-point (FP) iteration to solve SDARE:
\begin{align*}
     X_1 & = H_{\gamma}, \nonumber \\
     X_{k+1} &= E_{\gamma}^{\top}(X_k \otimes I_{r+1}) \left(I_{n(r+1)} + G_{\gamma}(X_k \otimes I_{r+1})\right)^{-1} E_{\gamma} + H_{\gamma}, 
\end{align*}
where $E_{\gamma}$, $G_{\gamma}$, and $H_{\gamma}$ are defined in \eqref{eq:mtx_FP}.  The convergence of the fixed-point iteration has been proved in \cite{guli:2023}.  
\end{description}

This paper is organized as follows. We propose the FP-CARE\_SDA (FPC) algorithm for solving SCARE (1.3) in Section~\ref{sec:FP-CARE-SDA} and prove its monotonically nondecreasing/nonincreasing convergence with suitable initial matrices in Section~\ref{sec:conv-FP-CARE-SDA}. 
We propose the mNT-FP-Lyap\_SDA algorithm and prove its monotonic convergences in Sections~\ref{sec:Newton_mNewton} and \ref{sec:conv_mNewton}, respectively. 
Moreover, for the practical applications, we proposed FPC-NT-FP-Lyap\_SDA and FPC-mNT-FP-Lyap\_SDA algorithms, which are combined FP-CARE\_SDA with NT-FP-Lyap\_SDA and mNT-FP-Lyap\_SDA algorithms.
In Section~\ref{sec:numerical}, numerical examples from benchmarks show the convergence behavior of proposed algorithms. 
The real-world practical applications demonstrate the robustness of FP-CARE\_SDA and the efficiency of the FPC-mNT-FP-Lyap\_SDA.



\section{Fixed-Point CARE\_SDA method} \label{sec:FP-CARE-SDA}
 In this section, we propose a FP-CARE\_SDA algorithm to solve \eqref{eq:Stoc-ARE-N}.
Let
\begin{align}
     L_c(X) = L +  \Pi_{12}(X), \quad R_c(X) = R+\Pi_{22}(X), \quad Q_c(X) = Q+\Pi_{11}(X). \label{eq:mtx_LRQ_c}
\end{align}
Then equation \eqref{eq:Stoc-ARE-N} is equivalent to
\begin{align*}
      A^{\top} X + X A- (XB + L_c(X)) R_c(X)^{-1} (XB + L_c(X))^{\top} + Q_c(X) = 0
\end{align*}
or
\begin{subequations}\label{eq1.2}
\begin{align}
      A_c(X)^{\top} X + X A_c(X) - X G_c(X) X + H_c(X) = 0 \label{eq:SARE_CARE}
\end{align}
with 
\begin{align}
      A_c(X) &= A - BR_c(X)^{-1} L_c(X)^{\top}, \label{eq:mtx_Ac}\\ 
      G_c(X) &= B R_c(X)^{-1} B^{\top}, \label{eq:mtx_Gc}\\ 
      H_c(X) &= Q_c(X) - L_c(X) R_c(X)^{-1} L_c(X)^{\top}. \label{eq:mtx_Hc}
\end{align}
\end{subequations}
For a given $X_k\ge 0$, we  denote
\begin{align}
      R_k = R_c(X_k), \quad G_k = G_c(X_k), \quad A_k = A_c(X_k), \quad H_k = H_c(X_k) \label{eq:mtx_AGHR_k}
\end{align}
and consider the following CARE
\begin{align}
      A_k^{\top} X + X A_k - X G_k X + H_k = 0, \label{eq:kth-SARE_CARE}
\end{align}
where $G_k\ge 0$ 
 from \eqref{eq:mtx_AGHR_k}. 

\begin{proposition} \label{prop:stabilizable}
      If $(A,B)$ is stabilizable, $(A_k, G_k)$ is stabilizable.
\end{proposition}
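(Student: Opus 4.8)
The plan is to deduce stabilizability of $(A_k,G_k)$ from that of $(A,B)$ by a Popov--Belevitch--Hautus (PBH) rank comparison, using only the explicit forms in \eqref{eq:mtx_Ac} and \eqref{eq:mtx_Gc}. Write $L_k\equiv L_c(X_k)$ and $R_k\equiv R_c(X_k)$. First I would record that, since $R>0$ and $\Pi_{22}(X_k)\ge 0$ (because $X_k\ge 0$, cf.\ the discussion around \eqref{eq:assumption_Pi}), the matrix $R_k=R+\Pi_{22}(X_k)$ is positive definite, hence invertible with a positive definite square root; consequently $G_k=BR_k^{-1}B^{\top}=(BR_k^{-1/2})(BR_k^{-1/2})^{\top}$ and $\operatorname{range}(G_k)=\operatorname{range}(BR_k^{-1/2})=\operatorname{range}(B)$.

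The key algebraic step is the identity
\[
  [\,\lambda I-A_k,\ B\,]=[\,\lambda I-A,\ B\,]
  \begin{bmatrix} I_n & 0\\ R_k^{-1}L_k^{\top} & I_m\end{bmatrix},
\]
which follows directly from $A_k=A-BR_k^{-1}L_k^{\top}$; since the right-hand factor is invertible, $\operatorname{rank}[\lambda I-A_k,\ B]=\operatorname{rank}[\lambda I-A,\ B]$ for every $\lambda$. Combining this with $\operatorname{range}(G_k)=\operatorname{range}(B)$ gives $\operatorname{rank}[\lambda I-A_k,\ G_k]=\operatorname{rank}[\lambda I-A_k,\ B]=\operatorname{rank}[\lambda I-A,\ B]$, and because $(A,B)$ is stabilizable the last quantity equals $n$ for all $\lambda$ with $\operatorname{Re}\lambda\ge 0$; this is exactly the PBH criterion for $(A_k,G_k)$ to be stabilizable. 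Equivalently, and perhaps closer in spirit to the feedback arguments used later in the paper, one can take $\widetilde F$ with $A-B\widetilde F$ stable and observe that $A_k-B(\widetilde F-R_k^{-1}L_k^{\top})=A-B\widetilde F$ is stable, so $(A_k,B)$ --- and hence $(A_k,G_k)$, whose effective input space $\operatorname{range}(G_k)$ coincides with $\operatorname{range}(B)$ --- is stabilizable.

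The only point that needs a little care (nothing deep) is the bookkeeping between the $n\times n$ matrix $G_k$ and the $n\times m$ matrix $B$: one must recall that stabilizability in the PBH sense depends on the second argument only through its column space, and that $\operatorname{range}(BR_k^{-1}B^{\top})=\operatorname{range}(B)$ precisely because $R_k^{-1}$ is positive definite. Everything else --- invertibility of $R_k$, the block-triangular factorization above, and the invariance of stabilizability under state feedback --- is routine, so I expect no serious obstacle.
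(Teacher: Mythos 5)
Your proposal is correct and is essentially the same argument as the paper's: the paper applies the PBH test in eigenvector form (a left eigenvector $y$ of $A_k$ with $\operatorname{Re}\lambda\ge 0$ annihilating $G_k$ must satisfy $y^{\top}B=0$ because $R_k>0$, hence $y^{\top}A=\lambda y^{\top}$, contradicting stabilizability of $(A,B)$), which uses exactly the two facts you isolate — $\operatorname{range}(G_k)=\operatorname{range}(B)$ and $A_k-A\in\operatorname{range}(B)\cdot\mathbb{R}^{m\times n}$. Your rank-factorization / state-feedback phrasing is just a slightly more structured packaging of the same idea, so no further comment is needed.
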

\begin{proof}
     Let $y^{\top} ( A - B R_k^{-1} L_c(X_k)) = \lambda y^{\top}$ with ${\rm Re}(\lambda) \geq 0$. Suppose $y^{\top}G_k = y^{\top} B R_k^{-1} B^{\top} = 0$. This implies that $y^{\top} B R_k^{-1/2} = 0$, i.e., $y^{\top} B = 0$.  Then we have $y^{\top} A = \lambda y^{\top}$, ${\rm Re}(\lambda) \geq 0$, which implies that $y = 0$ by stabilzability of $(A, B)$.
\end{proof}
\begin{proposition} \label{prop:detectable}
     Let $C^{\top} C= Q - L R^{-1} L^{\top}$ (a full rank decomposition). Suppose that 
     ${\rm Ker}C \subseteq \left(\bigcap_{i=1}^r{\rm Ker} A_0^i\right)\bigcap {\rm Ker}L$. If $(C, A)$ is detectable, $(H_k, A_k)$ is detectable and $H_k \geq 0$ for $X_k \geq 0$.
\end{proposition}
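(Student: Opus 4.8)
The plan is to establish the two assertions separately: $H_k\ge 0$ by a Schur-complement computation, and detectability of $(H_k,A_k)$ by the PBH (Popov--Belevitch--Hautus) eigenvector test, reduced to the hypothesis on $\mathrm{Ker}\,C$ via a short direct calculation. For $H_k\ge 0$, I would first note that the standing hypotheses $R>0$ and $Q-LR^{-1}L^{\top}\ge 0$ say exactly that the block weight matrix $\bigl[\begin{smallmatrix} Q & L \\ L^{\top} & R\end{smallmatrix}\bigr]\ge 0$. Since $X_k\ge 0$, \eqref{eq:assumption_Pi} gives $\Pi(X_k)\ge 0$, hence $\bigl[\begin{smallmatrix} Q_c(X_k) & L_c(X_k) \\ L_c(X_k)^{\top} & R_k\end{smallmatrix}\bigr]=\bigl[\begin{smallmatrix} Q & L \\ L^{\top} & R\end{smallmatrix}\bigr]+\Pi(X_k)\ge 0$, with $(2,2)$-block $R_k=R+\Pi_{22}(X_k)>0$. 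Its Schur complement with respect to $R_k$ is precisely $H_k=Q_c(X_k)-L_c(X_k)R_k^{-1}L_c(X_k)^{\top}$, so $H_k\ge 0$.

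For detectability, by the PBH test it suffices to show that $A_kv=\lambda v$ with $\mathrm{Re}(\lambda)\ge 0$ and $H_kv=0$ force $v=0$ (the condition $H_kv=0$ is the right one because $H_k\ge 0$, being equivalent to $\widehat C_kv=0$ for any factor $\widehat C_k^{\top}\widehat C_k=H_k$). Put $u=-R_k^{-1}L_c(X_k)^{\top}v$; then a direct computation gives $\bigl[\begin{smallmatrix} Q_c(X_k) & L_c(X_k) \\ L_c(X_k)^{\top} & R_k\end{smallmatrix}\bigr]\bigl[\begin{smallmatrix} v \\ u\end{smallmatrix}\bigr]=\bigl[\begin{smallmatrix} H_kv \\ 0\end{smallmatrix}\bigr]=0$. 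As this matrix is the sum of the two PSD matrices $\bigl[\begin{smallmatrix} Q & L \\ L^{\top} & R\end{smallmatrix}\bigr]$ and $\Pi(X_k)$, the vector $\bigl[\begin{smallmatrix} v \\ u\end{smallmatrix}\bigr]$ must lie in the kernel of each summand (recall $w^{*}Pw=0\Rightarrow Pw=0$ for $P\ge 0$). In particular $\bigl[\begin{smallmatrix} Q & L \\ L^{\top} & R\end{smallmatrix}\bigr]\bigl[\begin{smallmatrix} v \\ u\end{smallmatrix}\bigr]=0$, which forces $u=-R^{-1}L^{\top}v$ and $(Q-LR^{-1}L^{\top})v=C^{\top}Cv=0$, i.e.\ $Cv=0$. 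The hypothesis $\mathrm{Ker}\,C\subseteq\bigl(\bigcap_{i=1}^r\mathrm{Ker}\,A_0^i\bigr)\cap\mathrm{Ker}\,L$ then gives $A_0^iv=0$ for all $i$ and $L^{\top}v=0$, so $L_c(X_k)^{\top}v=L^{\top}v+\sum_{i=1}^r B_0^{i\top}X_kA_0^iv=0$ and hence $A_kv=Av-BR_k^{-1}L_c(X_k)^{\top}v=Av$. Thus $Av=\lambda v$ with $\mathrm{Re}(\lambda)\ge 0$ and $Cv=0$, and detectability of $(C,A)$ forces $v=0$.

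The argument is short and essentially elementary, so I expect the main difficulty to be bookkeeping rather than any conceptual step: one must correctly transport the kernel vector of the Schur complement $H_k$ up to a kernel vector $\bigl[\begin{smallmatrix} v \\ u\end{smallmatrix}\bigr]$ of the full $2\times 2$ block matrix and then legitimately split it across the two PSD summands, which is exactly where the identity $w^{*}Pw=0\Rightarrow Pw=0$ for $P\ge 0$ is crucial. Once $v\in\mathrm{Ker}\,C$ is obtained, the reduction $A_kv=Av$ and the appeal to the PBH characterization of detectability of $(C,A)$ are routine. I would also pin down the dimensional convention for $L$ so that ``$\mathrm{Ker}\,L$'' is read as $\{v\in\R^n : L^{\top}v=0\}$.
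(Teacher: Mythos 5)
Your proof is correct and takes essentially the same route as the paper: a Schur-complement/positive-semidefiniteness argument for $H_k\ge 0$, followed by the PBH eigenvector test reduced to detectability of $(C,A)$ via the kernel hypothesis. The only (minor) difference is how $Cv=0$ is extracted from $H_kv=0$: the paper writes $H_k=C^{\top}C+\Lambda(X_k)$ with $\Lambda(X_k)\ge 0$ the Schur complement of the shifted matrix $\Pi(X_k)+\bigl[\begin{smallmatrix} Q-C^{\top}C & L\\ L^{\top} & R\end{smallmatrix}\bigr]$ and splits the quadratic form of $H_k$ directly, whereas you lift the kernel vector of the Schur complement to a kernel vector of the full $(n+m)\times(n+m)$ block matrix and split the PSD sum at that level --- both are equally valid, and your version has the slight advantage of invoking $w^{*}Pw=0\Rightarrow Pw=0$ explicitly, which also handles complex eigenvectors cleanly.
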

\begin{proof}
     Let $A_k x = A x - BR_k^{-1} (L+\sum_{i=1}^r (B_0^i)^{\top} X_k A_0^i) x = \lambda x$ with ${\rm Re}(\lambda) \geq 0$. If $H_k x = 0$, then from \eqref{eq:mtx_AGHR_k} and \eqref{eq:mtx_Hc} follows that 
     \begin{align}\label{eq1.4.1}
     x^{\top} C^{\top} C x + x^{\top} \Lambda(X_k) x = 0, 
     \end{align}
     where $\Lambda(X) \equiv (Q-C^{\top} C+\Pi_{11}(X)) - L_c(X) R_c(X)^{-1}  L_c(X)^{\top}$. From \eqref{Pi}, it holds that
     \begin{align*}
            0 \leq \Pi(X_k) + \begin{bmatrix}
                 Q-C^{\top} C & L \\ L^{\top} & R
            \end{bmatrix} = \begin{bmatrix}
                 I & L_c(X_k) R_c(X_k)^{-1} \\ 0 & I
            \end{bmatrix} \begin{bmatrix} 
                 \Lambda(X_k) & 0 \\ 0 & R_c(X_k)
            \end{bmatrix} \begin{bmatrix}
                 I & 0 \\ R_c(X_k)^{-1} L_c(X_k)^{\top} & I
            \end{bmatrix},
     \end{align*}
     which implies that $\Lambda(X_k) \geq 0$   and $H_k = C^{\top} C + \Lambda(X_k) \geq 0$.
     Hence, we have $C x = 0$ and $x^{\top} \Lambda(X_k) x = 0$. Since ${\rm Ker}C \subseteq \left(\bigcap_{i=1}^r{\rm Ker} A_0^i\right)\bigcap {\rm Ker}L$, we obtain that $A_0^i x = 0$ for $i = 1, \ldots, r$. This means that $A x = \lambda x$ with ${\rm Re}(\lambda) \geq 0$. Therefore, $x = 0$ by the detectability of $(C,A)$ and then $(H_k, A_k)$ is detectable.
\end{proof}

\begin{theorem}\label{thm2.3}
    For a given $X_k\ge 0$, the CARE of \eqref{eq:kth-SARE_CARE} has a unique PSD stabilizing solution $X_{k+1}\ge 0$.
\end{theorem}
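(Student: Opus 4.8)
The plan is to reduce Theorem~\ref{thm2.3} to the classical existence–uniqueness theory for continuous-time algebraic Riccati equations, with Propositions~\ref{prop:stabilizable} and \ref{prop:detectable} supplying exactly the structural hypotheses that theory requires. First I would record the coefficient sign/semidefiniteness data for \eqref{eq:kth-SARE_CARE}: since $X_k\ge 0$ implies $\Pi_{22}(X_k)\ge 0$ by \eqref{Pi} and $R>0$ by assumption, the matrix $R_k=R+\Pi_{22}(X_k)$ is positive definite, hence $R_k^{-1}>0$ and $G_k=BR_k^{-1}B^{\top}\ge 0$; and $H_k\ge 0$ is already established in Proposition~\ref{prop:detectable}. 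So \eqref{eq:kth-SARE_CARE} is a CARE with $G_k\ge 0$ and $H_k\ge 0$.

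Next, under the standing assumptions that $(A,B)$ is stabilizable and $(C,A)$ is detectable (together with the kernel condition ${\rm Ker}\,C\subseteq(\bigcap_{i=1}^r{\rm Ker}\,A_0^i)\cap{\rm Ker}\,L$ used in Proposition~\ref{prop:detectable}), Proposition~\ref{prop:stabilizable} gives that $(A_k,G_k)$ is stabilizable and Proposition~\ref{prop:detectable} gives that $(H_k,A_k)$ is detectable. I would then invoke the standard CARE theorem (e.g.\ Lancaster–Rodman, or the references already cited in the paper for SDA-type solvers): for a CARE $A_k^{\top}X+XA_k-XG_kX+H_k=0$ with $G_k\ge 0$, $H_k\ge 0$, $(A_k,G_k)$ stabilizable and $(H_k,A_k)$ detectable, there exists a unique stabilizing solution $X_{k+1}$ (i.e.\ $A_k-G_kX_{k+1}$ is Hurwitz); moreover it coincides with the maximal Hermitian solution, and since $H_k\ge 0$ that maximal solution is positive semidefinite. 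This yields $X_{k+1}\ge 0$ as claimed. For uniqueness one uses the standard fact that a stabilizing solution of a CARE, when it exists, is unique regardless of semidefiniteness.

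If a self-contained argument is preferred over citing the classical theorem, I would instead work with the Hamiltonian matrix $\mathcal{H}_k=\begin{bmatrix} A_k & -G_k\\ -H_k & -A_k^{\top}\end{bmatrix}$: stabilizability of $(A_k,G_k)$ together with detectability of $(H_k,A_k)$ implies $\mathcal{H}_k$ has no purely imaginary eigenvalues, so its stable invariant subspace is $n$-dimensional and complementary to $\{0\}\times\mathbb{R}^n$ in a graph form $\begin{bmatrix} I\\ X_{k+1}\end{bmatrix}$; one then checks $X_{k+1}$ solves \eqref{eq:kth-SARE_CARE}, is stabilizing, is symmetric (using $H_k=H_k^{\top}$, $G_k=G_k^{\top}$), and is PSD by a Lyapunov argument on $A_k-G_kX_{k+1}$ with right-hand side involving $H_k+X_{k+1}G_kX_{k+1}\ge 0$.

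The main point to be careful about — rather than a genuine obstacle — is bookkeeping: matching the sign convention of \eqref{eq:kth-SARE_CARE} to that in whichever source is cited, and confirming that the hypotheses invoked (stabilizability, detectability, $G_k\ge 0$, $H_k\ge 0$) are precisely what that source needs for both existence of the stabilizing solution and its positive semidefiniteness. Everything else is a direct application of Propositions~\ref{prop:stabilizable} and \ref{prop:detectable}.
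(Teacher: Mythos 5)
Your proposal is correct and follows essentially the same route as the paper: the paper's proof simply combines Propositions~\ref{prop:stabilizable} and \ref{prop:detectable} (which give stabilizability of $(A_k,G_k)$, detectability of $(H_k,A_k)$, and $H_k\ge 0$) with the classical existence--uniqueness theorem for CAREs cited as \cite{laro:1980}. Your additional Hamiltonian-matrix sketch is a standard self-contained substitute for that citation but is not needed.
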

\begin{proof}
    From Propositions~\ref{prop:stabilizable} and \ref{prop:detectable}, as well as the uniqueness and existence theorem \cite{laro:1980} follows that there is a unique stabilizing solution $X_{k+1}\ge 0$ for \eqref{eq:kth-SARE_CARE}.
\end{proof}

 It has been shown in \cite{hull:2018,lixu:2006} that the structure-preserving doubling algorithm (SDA) can be applied to solve \eqref{eq:kth-SARE_CARE} with quadratic convergence. In this paper, 
we consider using the SDA as an implicit fixed-point iteration, called FP-CARE\_SDA as stated in Algorithm~\ref{alg:SDA-CARE}, for solving the SCARE~\eqref{eq:Stoc-ARE-N}. 

\begin{algorithm}
\caption{FP-CARE\_SDA method for solving SCARE \eqref{eq:Stoc-ARE-N} } \label{alg:SDA-CARE}
\begin{algorithmic}[1]
\REQUIRE $A, Q \in \mathbb{R}^{n \times n}$, $B, L  \in \mathbb{R}^{n \times m}$, $R = R^{\top} \in \mathbb{R}^{m \times m}$, $A_0^i \in \mathbb{R}^{n \times n}$, $B_0^i \in \mathbb{R}^{n \times m}$ for $i = 1, \ldots, r$, and  a tolerance $\varepsilon$.
\ENSURE Solution $X$.
\STATE Set $k = 0$ and $\delta = \infty$.
\STATE Choose an initial $X_0^{\top} = X_0$.
\WHILE{$\delta>\varepsilon$}
\STATE Set $L_c = L + \Pi_{12}(X_k)$,  $R_c = \Pi_{22}(X_k) + R$ and $Q_c = \Pi_{11}(X_k) + Q$.
\STATE Set $A_c = A - B R_c^{-1} L_c^{\top}$, $G_c = B R_c^{-1} B^{\top}$ and $H_c = Q_c - L_c R_c^{-1} L_c^{\top}$.
\STATE Use SDA to solve the stabilizing solution $X_{k+1}$ of the CARE \label{alg:step6_FP-CARE}
\begin{align*}
      A_c^{\top} X + X A_c  - X G_c  X + H_c  = 0.
\end{align*} 
\STATE Compute the normalized residual $\delta$ of \eqref{eq:Stoc-ARE-N} with $X = X_{k+1}$. 
\STATE Set $k = k + 1$.
\ENDWHILE 
\end{algorithmic}
\end{algorithm}

\section{Convergence Analysis for FP-CARE\_SDA Method} \label{sec:conv-FP-CARE-SDA}
In this section, we will study the convergence analysis of the FP-CARE\_SDA algorithm. Let $\widehat{X}$ be a PSD solution of SCARE \eqref{eq:SCARE_gen}. With different initial conditions of $X_0$, the sequence $\{X_k\}_{k=0}^{\infty}$ generated by FP-CARE\_SDA is monotonically nondecreasing/nonincreasing to $\widehat{X}$ and R-linearly convergent to some PSD solution $\widehat{X}_-/\widehat{X}_+$, respectively, with $\widehat{X}_+\ge \widehat{X}\ge \widehat{X}_-\ge 0$.

Let 
\begin{align}
    \Gamma(X)&\equiv \left[\begin{array}{cc|c}
              0&-A&-B\\
              -A^{\top}&Q_c(X) & L_{c}(X) \\  \hline 
              -B^{\top}&L_{c}(X)^{\top} & R_c(X)
            \end{array}\right] \equiv \left[\begin{array}{c|c}
         \Gamma_{11}(X) & \Gamma_{12}(X) \\ \hline\Gamma_{21}(X)^{\top} & R_c(X)
    \end{array}\right]  \in \mathbb{R}^{(2n+m)\times (2n+m)}, \label{eq:mtx_Gamma}
\end{align}
where $Q_c(X)$, $L_c(X)$, and $R_c(X)$ are given in \eqref{eq:mtx_LRQ_c}. 
Then the Schur complement of the block matrix $\Gamma(X)$ has the form
\begin{align}\label{eq:mtx_Omega}
    \Omega(X)=&\begin{bmatrix}
        0&-A\\
     -A^{\top}&Q_c(X)    \end{bmatrix}-
            \begin{bmatrix}
              -B\\
            L_{c}(X)        \end{bmatrix}R_c(X)^{-1}\begin{bmatrix}
              -B^{\top}&L_{c}(X)^{\top}  
            \end{bmatrix}\nonumber\\
   =& \begin{bmatrix}
        -BR_c(X)^{-1}B^{\top}&-(A-BR_c(X)^{-1} L_{c}(X)^{\top})\\
     -(A^{\top}- L_{c}(X)R_c(X)^{-1}B^{\top})&Q_c(X)- L_{c}(X)R_c(X)^{-1}L_{c}(X)^{\top}  \end{bmatrix} \nonumber\\
     =&\begin{bmatrix}
        -G_c(X)&-A_c(X)\\
     -A_c(X)^{\top}&H_c(X)  \end{bmatrix},
\end{align}
where $A_c(X)$, $G_c(X)$ and $H_c(X)$ in \eqref{eq1.2} are the coefficient matrices of CARE \eqref{eq:SARE_CARE}. 

\begin{lemma} \label{lem:increasing_Omega}
Let $\Omega(X)$ be defined in \eqref{eq:mtx_Omega}. Then
\begin{align}\label{eq:increasing_Omega}
     \Omega(X)\ge \Omega(Y)\text{ for } X\ge Y\ge 0. 
\end{align}
\end{lemma}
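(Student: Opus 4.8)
The plan is to recognize $\Omega(X)$ as the Schur complement of the symmetric matrix $\Gamma(X)$ with respect to its pivot block $R_c(X)$, and then to combine the monotonicity of $X\mapsto\Gamma(X)$ with the standard fact that taking Schur complements is order preserving once the pivot block is positive definite.

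First I would split off the $X$-dependent part of $\Gamma(X)$. Using $Q_c(X)=Q+\Pi_{11}(X)$, $L_c(X)=L+\Pi_{12}(X)$, $R_c(X)=R+\Pi_{22}(X)$ from \eqref{eq:mtx_LRQ_c} together with \eqref{Pi}, one has
\begin{align*}
\Gamma(X)=\begin{bmatrix} 0 & -A & -B\\ -A^{\top} & Q & L\\ -B^{\top} & L^{\top} & R\end{bmatrix}+\begin{bmatrix} 0 & 0\\ 0 & \Pi(X)\end{bmatrix},
\end{align*}
where the block $\Pi(X)$ occupies the last $n+m$ rows and columns. Since $\Pi(X)\ge\Pi(Y)\ge 0$ whenever $X\ge Y\ge 0$ by \eqref{eq:assumption_Pi}, this gives $\Gamma(X)\ge\Gamma(Y)$ at once. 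Moreover, because $R>0$ and $\Pi_{22}(Z)\ge 0$ for every $Z\ge 0$, the pivot block $R_c(Z)=R+\Pi_{22}(Z)$ is positive definite for all $Z\ge 0$; in particular both $R_c(X)$ and $R_c(Y)$ are positive definite.

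Next I would invoke the variational description of the Schur complement: for a symmetric block matrix with blocks $M_{11},M_{12},M_{22}$ and $M_{22}\succ 0$, and any $u$ of matching size,
\begin{align*}
u^{\top}\!\left(M_{11}-M_{12}M_{22}^{-1}M_{12}^{\top}\right)\!u=\min_{v}\begin{bmatrix} u\\ v\end{bmatrix}^{\top}\begin{bmatrix} M_{11}&M_{12}\\ M_{12}^{\top}&M_{22}\end{bmatrix}\begin{bmatrix} u\\ v\end{bmatrix},
\end{align*}
the minimum being attained at $v=-M_{22}^{-1}M_{12}^{\top}u$; this requires only $M_{22}\succ 0$, not positivity of the whole matrix. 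Applying it to $\Gamma(X)$ and to $\Gamma(Y)$, which is legitimate since both $(3,3)$-blocks are positive definite, and recalling from \eqref{eq:mtx_Omega} that the Schur complement of $\Gamma(Z)$ equals $\Omega(Z)$, I obtain for every $u\in\mathbb{R}^{2n}$
\begin{align*}
u^{\top}\Omega(X)u=\min_{v}\begin{bmatrix} u\\ v\end{bmatrix}^{\top}\!\Gamma(X)\begin{bmatrix} u\\ v\end{bmatrix}\ge\min_{v}\begin{bmatrix} u\\ v\end{bmatrix}^{\top}\!\Gamma(Y)\begin{bmatrix} u\\ v\end{bmatrix}=u^{\top}\Omega(Y)u,
\end{align*}
the inequality being a pointwise comparison of the two quadratic forms via $\Gamma(X)\ge\Gamma(Y)$. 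Since $u$ is arbitrary, this is exactly \eqref{eq:increasing_Omega}.

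There is no genuine obstacle here; the one point that needs care is that the pivot block must be positive \emph{definite} (not merely semidefinite) for the Schur complement and its variational formula to be well defined, which is precisely where the standing hypothesis $R>0$ enters. As an alternative to the variational argument, one could write $\Gamma(X)=\Gamma(Y)+\Delta$ with $\Delta=\mathrm{diag}(0,\Pi(X)-\Pi(Y))\ge 0$ and expand $\Omega(X)-\Omega(Y)$ directly using the block $LDL^{\top}$ factorization already exhibited around \eqref{eq:mtx_Omega}; I would still prefer the variational route, since it keeps the algebra to a minimum.
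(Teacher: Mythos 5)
Your proof is correct. The first half (showing $\Gamma(X)\ge\Gamma(Y)$ by splitting off the $X$-dependent block $\mathrm{diag}(0,\Pi(X))$ and invoking \eqref{eq:assumption_Pi}) is exactly what the paper does. Where you diverge is in passing from $\Gamma(X)\ge\Gamma(Y)$ to $\Omega(X)\ge\Omega(Y)$: the paper compresses $\Gamma$ against a fixed $\bw\in\mathbb{R}^{2n}$ to get a $(1+m)\times(1+m)$ matrix $\Gamma_{\bw}$, adds a shift $\alpha$ to force positive definiteness, and then reads off $(\bw^{\top}\Omega\bw+\alpha)^{-1}$ as the $(1,1)$ entry of $\Gamma_{\bw}^{\alpha}{}^{-1}$, concluding via the antitonicity of matrix inversion on positive definite matrices. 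You instead use the variational characterization of the Schur complement, $u^{\top}\Omega(Z)u=\min_{v}[u;v]^{\top}\Gamma(Z)[u;v]$, which is valid as soon as the pivot block $R_c(Z)=R+\Pi_{22}(Z)>0$ (guaranteed by $R>0$ and $Z\ge 0$), and then simply compares the two quadratic forms pointwise before minimizing. Your route is cleaner: it avoids the $\alpha$-regularization and the inversion-monotonicity lemma entirely, and it makes transparent exactly where positive definiteness of the pivot block is needed and why no positivity of the full matrix $\Gamma$ is required. Both arguments rest on the same underlying fact — monotonicity of the Schur complement in the Loewner order with a positive definite pivot — so the difference is one of packaging rather than substance, but yours is the more economical packaging.
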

\begin{proof}
Suppose that $X\ge Y\ge 0$. From \eqref{eq:mtx_Gamma}, \eqref{eq:mtx_LRQ_c} and \eqref{Pi}, we have
\begin{align*}
    \Gamma(X)&=\left[\begin{array}{c|cc}
              0&-A&-B\\\hline
              -A^{\top}&Q & L \\   
              -B^{\top}&L^{\top} & R
    \end{array}\right]+\left[\begin{array}{ccc}
              0&\cdots&0\\\hline 
              A_0^{1\top}&\cdots & A_{0}^{r\top} \\  
              B_0^{1\top}&\cdots & B_{0}^{r\top}
            \end{array}\right] (I_r\otimes X)\left[\begin{array}{c|cc}
              0&A_0^1&B_0^1\\
              \vdots&\vdots & \vdots \\  
              0&A_0^r & B_0^{r}
            \end{array}\right]\\
            &\ge\left[\begin{array}{c|cc}
              0&-A&-B\\\hline
              -A^{\top}&Q & L \\   
              -B^{\top}&L^{\top} & R
    \end{array}\right]+\left[\begin{array}{ccc}
              0&\cdots&0\\\hline 
              A_0^{1\top}&\cdots & A_{0}^{r\top} \\  
              B_0^{1\top}&\cdots & B_{0}^{r\top}
            \end{array}\right] (I_r\otimes Y)\left[\begin{array}{c|cc}
              0&A_0^1&B_0^1\\
              \vdots&\vdots & \vdots \\  
              0&A_0^r & B_0^{r}
            \end{array}\right]=\Gamma(Y).
\end{align*}
Now, we show that \eqref{eq:increasing_Omega}. 
For any $\bw\in \mathbb{R}^{2n}$, since $\Gamma(X)\ge \Gamma(Y)$, we have
    \begin{align}\label{eq:Gamma_w_increasing}
        \Gamma_{\bw}(X)\equiv\left[\begin{array}{cc}
        \bw^{\top}\Gamma_{11}(X)\bw &\bw^{\top}\Gamma_{12}(X)   \\
        \Gamma_{12}(X)^{\top}\bw  & R_c(X) 
    \end{array}\right]\ge \left[\begin{array}{cc}
        \bw^{\top}\Gamma_{11}(Y)\bw &\bw^{\top}\Gamma_{12}(Y)   \\
        \Gamma_{12}^{\top}(Y)\bw  & R_c(Y) 
    \end{array}\right]\equiv \Gamma_{\bw}(Y).
    \end{align}
    It is easily seen that $\bw^{\top}\Omega(X)\bw$ and $\bw^{\top}\Omega(Y)\bw$ are Schur complement of $\Gamma_{\bw}(X)$ and $\Gamma_{\bw}(Y)$, respectively. Suppose that $\alpha\in \mathbb{R}_{+}$ such that 
    \begin{align*}
        \bw^{\top}\Omega(Y)\bw+\alpha>0.
    \end{align*}
Let
    \begin{align}\label{eq:Gamma_wa}
        \Gamma_{\bw}^{\alpha}(X)=\Gamma_{\bw}(X)+\left[\begin{array}{cc}
            \alpha &0  \\
            0 & 0
        \end{array}\right],\ \ \Gamma_{\bw}^{\alpha}(Y)=\Gamma_{\bw}(Y)+\left[\begin{array}{cc}
            \alpha &0  \\
            0 & 0
        \end{array}\right].
    \end{align}
    Note that $\bw^{\top}\Omega(Y)\bw+\alpha$ is the Schur complement of $\Gamma_{\bw}^{\alpha}(Y)$. Since $R_c(Y)>0$ and $\bw^{\top}\Omega(Y)\bw+\alpha>0$, it follows from \eqref{eq:Gamma_w_increasing} that $\Gamma_{\bw}^{\alpha}(X)\ge \Gamma_{\bw}^{\alpha}(Y)>0$. Hence,
    \begin{align}\label{eq:Gamma_wa_inv}
        0<\Gamma_{\bw}^{\alpha}(X)^{-1}\le \Gamma_{\bw}^{\alpha}(Y)^{-1}.
    \end{align}
    From \eqref{eq:Gamma_wa}, we obtain that the $(1,1)$ entries of $\Gamma_{\bw}^{\alpha}(X)^{-1}$ and $\Gamma_{\bw}^{\alpha}(Y)^{-1}$ are $(\bw^{\top}\Omega(X)\bw+\alpha)^{-1}$ and $(\bw^{\top}\Omega(Y)\bw+\alpha)^{-1}$, respectively. It follows from \eqref{eq:Gamma_wa_inv} that $0<(\bw^{\top}\Omega(X)\bw+\alpha)^{-1}\le (\bw^{\top}\Omega(Y)\bw+\alpha)^{-1}$ for any $\bw \in \mathbb{R}^{2n}$. This implies that \eqref{eq:increasing_Omega} holds.
\end{proof}

\begin{remark} \label{rem:equiv_form_SCARE}
Let $\Omega(X)$ be defined in \eqref{eq:mtx_Omega}. The SCARE in \eqref{eq:SARE_CARE} can be represented as  
\begin{align}\label{eq:new_rep_SCARE}
    \begin{bmatrix} X &  -I \end{bmatrix} \Omega(X)\begin{bmatrix}
        X\\
     -I  \end{bmatrix}=0.
\end{align}
\end{remark}

Let $\{X_k\}_{k=0}^{\infty}$ be a sequence generated by Algorithm 1. 
Suppose that $X_{k-1}\geq 0$. 
From Theorem \ref{thm2.3}, it holds that the CARE
\begin{align}
      A_{k-1}^{\top} X_{k} + X_{k} A_{k-1} - X_{k} G_{k-1} X_{k} + H_{k-1} = 0, \label{eq:k-1th-SARE_CARE}
\end{align}
has a PSD solution $X_k\ge 0$.
That is,
\begin{align}\label{eq2.13}
    \begin{bmatrix} X_{k} &  -I \end{bmatrix} \Omega(X_{k-1})\begin{bmatrix}
        X_k\\
     -I  \end{bmatrix}=0.
\end{align} 
We now state a well-known result \cite{laro:1995} to show the monotonicity of $\{ X_k \}_{k=0}^{\infty}$.
\begin{lemma}{\cite{laro:1995}} \label{lem3.2}
    Let $A,Q\in \mathbb{R}^{n\times n}$ with $A$ stable and $Q\le 0$. Then the solution of the Lyapunov equation $A^{\top}X+XA=Q$ is PSD.
\end{lemma}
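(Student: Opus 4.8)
The plan is to invoke the classical integral representation for solutions of stable Lyapunov equations. First I would note that stability of $A$ makes the Lyapunov operator $\mathcal{L}(X)=A^{\top}X+XA$ nonsingular: its eigenvalues are the sums $\lambda_i+\lambda_j$ with $\lambda_i,\lambda_j\in\sigma(A)$, each of which has strictly negative real part, so $0\notin\sigma(\mathcal{L})$ and the equation $A^{\top}X+XA=Q$ has exactly one solution. It therefore suffices to produce one PSD matrix satisfying the equation, and that matrix will automatically be \emph{the} solution.

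Next I would set $X=\int_{0}^{\infty}e^{A^{\top}t}(-Q)\,e^{At}\,dt$ and check this is well defined: since $A$ is stable there are constants $c\ge 1$ and $\beta>0$ with $\|e^{At}\|\le c\,e^{-\beta t}$, so the integrand is dominated in norm by $c^{2}\|Q\|\,e^{-2\beta t}$, which is integrable; the same bound justifies differentiating under the integral sign. Then a telescoping computation gives $A^{\top}X+XA=\int_{0}^{\infty}\frac{d}{dt}\bigl(e^{A^{\top}t}(-Q)\,e^{At}\bigr)\,dt=\bigl[e^{A^{\top}t}(-Q)\,e^{At}\bigr]_{0}^{\infty}=0-(-Q)=Q$, using $e^{At}\to 0$ as $t\to\infty$. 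Hence $X$ solves the Lyapunov equation.

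Finally, positive semidefiniteness is immediate: for any $v\in\mathbb{R}^{n}$ we have $v^{\top}Xv=\int_{0}^{\infty}(e^{At}v)^{\top}(-Q)(e^{At}v)\,dt\ge 0$ because $-Q\ge 0$. Thus $X\ge 0$, and by the uniqueness noted above this $X$ is the solution of $A^{\top}X+XA=Q$.

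There is no substantial obstacle here — this is a textbook fact — but the only points requiring a line of care are the convergence of the improper integral and the legitimacy of differentiating under the integral, both of which rest on the exponential decay estimate $\|e^{At}\|\le c\,e^{-\beta t}$ coming from stability of $A$. If one prefers to avoid integrals, an alternative is to combine the unique solvability of $\mathcal{L}(X)=Q$ with a limiting/monotonicity argument, but the integral route is the cleanest and is the one I would write out.
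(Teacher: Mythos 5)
The paper states this lemma without proof, simply citing the reference, so there is no in-paper argument to compare against; your integral-representation proof is the standard one and is correct. Your verification of well-definedness via the decay bound $\|e^{At}\|\le c\,e^{-\beta t}$, the telescoping identity $A^{\top}X+XA=\bigl[e^{A^{\top}t}(-Q)e^{At}\bigr]_{0}^{\infty}=Q$, the positive semidefiniteness from $-Q\ge 0$, and the uniqueness via the spectrum $\{\lambda_i+\lambda_j\}$ of the Lyapunov operator together give a complete and correct proof of the statement.
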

\begin{theorem} \label{thm:monotonical_increasing}
      Let $\widehat{X} \ge 0$ be a solution of SCARE \eqref{eq:Stoc-ARE-N}. Suppose that $\{ X_k \}_{k=0}^{\infty}$ is a sequence generated by Algorithm 1 with $X_0=0$. 
 Then 
 \begin{description}
     \item[(i)] $X_k\le X_{k+1}$, $X_k\le \widehat{X}$, $\mathcal{R}(X_k)\ge 0$, and $A_k-G_k\widehat{X}$ is stable for each $k\ge 0$.
     \item[(ii)] $\lim_{k\rightarrow \infty}X_k=\widehat{X}_-$, where $\widehat{X}_-\le \widehat{X}$ is a solution of SCARE \eqref{eq:Stoc-ARE-N}.
     \item[(iii)] $\sigma\left(A_c(\widehat{X}_-)-G_c(\widehat{X}_-)\widehat{X}_-\right)\subset \mathbb{C}_-\bigcup i\mathbb{R}$.
 \end{description}
\end{theorem}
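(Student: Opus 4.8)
The plan is to prove (i) by induction on $k$, using the CARE \eqref{eq:k-1th-SARE_CARE} together with Lemma~\ref{lem3.2} and the monotonicity of $\Omega$ from Lemma~\ref{lem:increasing_Omega}. For the base case $k=0$: since $X_0 = 0$ we have $H_0 = H_c(0) = Q - LR^{-1}L^{\top} = C^{\top}C \ge 0$, and $X_1$ solves $A_0^{\top}X_1 + X_1A_0 - X_1G_0X_1 + H_0 = 0$; rewriting this as a Lyapunov equation $(A_0 - G_0X_1)^{\top}X_1 + X_1(A_0-G_0X_1) = -(H_0 + X_1G_0X_1) \le 0$ with $A_0 - G_0X_1$ stable (the SDA returns the stabilizing solution, Theorem~\ref{thm2.3}), Lemma~\ref{lem3.2} gives $X_1 \ge 0 = X_0$. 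To get $X_0 \le \widehat X$ is trivial; the substantive base-case fact is $A_0 - G_0\widehat X$ stable, which I would obtain by writing the SCARE identity $[\widehat X\ \ -I]\,\Omega(\widehat X)\,[\widehat X;\,-I]^{\top} = 0$ and the CARE-$0$ identity $[X_1\ \ -I]\,\Omega(X_0)\,[X_1;\,-I]^{\top} = 0$, subtracting, and using $\Omega(\widehat X) \ge \Omega(X_0)$ plus a standard stabilizing-solution comparison argument (à la the theory in \cite{laro:1980,laro:1995}).

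For the inductive step, assume $X_{k-1} \le X_k \le \widehat X$, $\mathcal{R}(X_{k-1}) \ge 0$, and $A_{k-1} - G_{k-1}\widehat X$ stable. First I show $X_k \le X_{k+1}$: from \eqref{eq2.13} at level $k$ and level $k-1$ one derives that $X_{k+1} - X_k$ satisfies a Lyapunov equation whose right-hand side is $\le 0$ — the key input is $\Omega(X_k) \ge \Omega(X_{k-1})$ (Lemma~\ref{lem:increasing_Omega}, since $X_k \ge X_{k-1} \ge 0$), which feeds the difference of the two quadratic forms the right sign — and whose coefficient matrix $A_k - G_kX_{k+1}$ is stable (stabilizing solution). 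Then Lemma~\ref{lem3.2} gives $X_{k+1} \ge X_k$. Next, $X_{k+1} \le \widehat X$: compare \eqref{eq2.13} at level $k$ with the SCARE identity for $\widehat X$; here I use $\Omega(\widehat X) \ge \Omega(X_k)$ (from the inductive hypothesis $X_k \le \widehat X$) and the stability of $A_k - G_k\widehat X$, which itself must be re-established — this is where one uses that $A_k - G_k\widehat X$ is a convex-combination-type perturbation controlled by the already-known stability at level $k-1$ and the monotonicity $X_{k-1} \le X_k$; concretely, $(A_k - G_k\widehat X)$ stable follows because $\widehat X$ is a supersolution of CARE~\eqref{eq:kth-SARE_CARE} (i.e.\ $\mathcal R_k(\widehat X) \le 0$ where $\mathcal R_k$ is the $k$th CARE operator), which in turn follows from $\mathcal{R}(\widehat X) = 0$ and $\Pi(X_k) \le \Pi(\widehat X)$. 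Finally $\mathcal{R}(X_{k+1}) \ge 0$: evaluate $\mathcal{R}$ at $X_{k+1}$, subtract the CARE~\eqref{eq2.13} identity it satisfies, and the discrepancy is exactly $[X_{k+1}\ \ -I](\Omega(X_{k+1}) - \Omega(X_k))[X_{k+1};\,-I]^{\top} \ge 0$ by Lemma~\ref{lem:increasing_Omega} and $X_{k+1} \ge X_k$.

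Part (ii) then follows from the Monotone Convergence Theorem for bounded nondecreasing sequences of symmetric matrices: $\{X_k\}$ is nondecreasing and bounded above by $\widehat X$, so $X_k \to \widehat X_- \le \widehat X$; passing to the limit in the defining CARE relation (all the data $A_c, G_c, H_c, R_c^{-1}$ depend continuously on $X$ since $R_c(X) = R + \Pi_{22}(X) \ge R > 0$ stays uniformly positive definite on the bounded set $0 \le X \le \widehat X$) shows $\widehat X_-$ solves \eqref{eq:Stoc-ARE-N}, i.e.\ $\mathcal{R}(\widehat X_-) = 0$. For part (iii), $A_c(\widehat X_-) - G_c(\widehat X_-)\widehat X_-$ is the limit of the stable matrices $A_k - G_k X_{k+1}$ (eigenvalues are continuous), so its spectrum lies in $\overline{\mathbb{C}_-} = \mathbb{C}_- \cup i\mathbb{R}$. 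The main obstacle I anticipate is the bookkeeping around re-establishing, at each induction level, the stability of $A_k - G_k\widehat X$ (and of the stabilizing closed-loop matrices) cleanly — one wants a single lemma of the form "if $Y$ is a positive semidefinite supersolution of CARE~\eqref{eq:kth-SARE_CARE} then the stabilizing solution $X_{k+1}$ exists with $X_{k+1} \le Y$ and $A_k - G_k Y$ stable" and then to verify that $\widehat X$ is such a supersolution at every level, leaning on $\Pi$-monotonicity; getting the inequality directions consistent between the $\Omega$-formulation and the $(A_c, G_c, H_c)$-formulation is the place where sign errors are easiest.
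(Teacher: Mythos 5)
Your proposal follows essentially the same route as the paper: induction using the monotonicity of $\Omega$ (Lemma~\ref{lem:increasing_Omega}) and the Lyapunov comparison (Lemma~\ref{lem3.2}) to get $X_k\le X_{k+1}\le\widehat X$ and $\mathcal{R}(X_k)\ge 0$, then monotone convergence for (ii) and continuity of eigenvalues for (iii). The one place to be careful is your claim that ``$A_k-G_k\widehat X$ stable follows because $\widehat X$ is a supersolution of the $k$th CARE'': a supersolution alone does not force stability — the paper closes this step with a contradiction argument that invokes the detectability of $(H_k,A_k)$ from Proposition~\ref{prop:detectable} (if $(A_k-G_k\widehat X)y=\lambda y$ with $\operatorname{Re}\lambda\ge 0$ then $H_ky=0$ and $G_k\widehat Xy=0$), so you need that hypothesis explicitly in your ``standard comparison'' lemma.
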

\begin{proof}
    (i). We prove the assertion by induction for each $k\ge 0$. 
    For $k=0$, we have $0=X_0\le \widehat{X}$ and $\mathcal{R}(X_0)=H_0\ge 0$. Since $X_1$ is the stabilizing PSD solution of 
    \begin{align*}
    A_0^{\top}X+XA_0-XG_0X+H_0=0,
\end{align*}
    where $(A_0, G_0)$ is stabilizable and $(H_0,A_0)$ is detectable, we have $X_1\ge X_0=0$. Now, we claim that $A_0-G_0\widehat{X}$ is stable. From Lemma \ref{lem:increasing_Omega} and using the fact that $X_{0}\le \widehat{X}$, we have 
    \begin{align}\label{eq3.12}
        (A_0^{\top}-\widehat{X}G_0)\widehat{X}+\widehat{X}(A_0-G_0\widehat{X})&=\begin{bmatrix} \widehat{X} &  -I \end{bmatrix} \Omega(X_0)\begin{bmatrix}
        \widehat{X}\\
     -I  \end{bmatrix}-H_0-\widehat{X}^{\top}G_0\widehat{X}\nonumber\\
     &\le -H_0-\widehat{X}^{\top}G_0\widehat{X}.
    \end{align}
    Suppose that $A_0-G_0\widehat{X}$ is not stable. Then there exist $y\neq 0$ and Re$(\lambda)\ge 0$ such that $(A_0-G_0\widehat{X})y=\lambda y$. From \eqref{eq3.12}, we have
    \begin{align*}
        2{\rm Re}(\lambda)y^*\widehat{X}y\le -y^*H_0y-y^*\widehat{X}^{\top}G_0\widehat{X}y.
    \end{align*}
    Since $\widehat{X}, H_0, G_0\ge 0$ and Re$(\lambda)\ge 0$, we obtain that $H_0y=0$ and $G_0\widehat{X}y=0$. This implies that $A_0y=\lambda y$ which is a contradiction because of the detectabity of $(H_0,A_0)$. Hence, $A_0-G_0\widehat{X}$ is stable. We now assume that the assertion (i) is true for $k\ge 0$ and $X_{k+2}$ is the stabilizing solution of the CARE
\begin{align*}
 \begin{bmatrix} X &  -I \end{bmatrix} \Omega(X_{k+1})\begin{bmatrix}
        X\\
     -I  \end{bmatrix}=0.
\end{align*}
Then we have $A_{k+1} - G_{k+1} X_{k+2}$ is stable and 
\begin{align*}
    & ( A_{k+1}^{\top} - X_{k+2} G_{k+1})(X_{k+2} - X_{k+1}) + ( X_{k+2} - X_{k+1})( A_{k+1} - G_{k+1} X_{k+2})  \nonumber\\
    =& A_{k+1}^{\top}X_{k+2}+X_{k+2}A_{k+1}- X_{k+2} G_{k+1}X_{k+2}-(A_{k+1}^{\top}X_{k+1}+X_{k+1}A_{k+1}-X_{k+1} G_{k+1}X_{k+1}) \nonumber \\
    &- (X_{k+2} - X_{k+1}) G_{k+1} ( X_{k+2} - X_{k+1}) \nonumber \\
    =& -\mathcal{R}(X_{k+1})- (X_{k+2} - X_{k+1}) G_{k+1} ( X_{k+2} - X_{k+1}).
\end{align*}
From Lemma \ref{lem:increasing_Omega} and using the fact that $X_{k}\le X_{k+1}$, we have
\begin{align*}
    0=\begin{bmatrix} X_{k+1} & -I \end{bmatrix} \Omega(X_{k}) \begin{bmatrix}
        X_{k+1}\\
     -I  \end{bmatrix}\le \begin{bmatrix} X_{k+1} & -I \end{bmatrix} \Omega(X_{k+1}) \begin{bmatrix}
        X_{k+1}\\
     -I  \end{bmatrix}=\mathcal{R}(X_{k+1}).
\end{align*}
Hence, $X_{k+1}\le X_{k+2}$ by Lemma \ref{lem3.2}. Using the fact that $X_k\le \widehat{X}$ and Lemma \ref{lem:increasing_Omega}, we have 
\begin{align*}
    &(A_k^{\top}-\widehat{X}G_k)(\widehat{X}-X_{k+1})+(\widehat{X}-X_{k+1})(A_k-G_k\widehat{X})\\
    &=\begin{bmatrix} \widehat{X} &  -I \end{bmatrix} \Omega(X_k)\begin{bmatrix}
        \widehat{X}\\
     -I  \end{bmatrix}-(\widehat{X}-X_{k+1})G_k(\widehat{X}-X_{k+1})
     \le -(\widehat{X}-X_{k+1})G_k(\widehat{X}-X_{k+1}).
\end{align*}
This leads to $X_{k+1}\le\widehat{X}$ because $A_k-G_k\widehat{X}$ is stable. Now, we claim that $A_{k+1}-G_{k+1}\widehat{X}$ is stable. After calculation, it holds that  
\begin{subequations}\label{eq3.16}
\begin{align}\label{eq3.16a}
    &(A_{k+1}^{\top}-\widehat{X}G_{k+1})(\widehat{X}-X_{k+1})+(\widehat{X}-X_{k+1})(A_{k+1}-G_{k+1}\widehat{X})\nonumber\\
    &=\begin{bmatrix} \widehat{X} &  -I \end{bmatrix} \Omega(X_{k+1})\begin{bmatrix}
        \widehat{X}\\
     -I  \end{bmatrix}-\begin{bmatrix} X_{k+1} &  -I \end{bmatrix} \Omega(X_{k+1})\begin{bmatrix}
        X_{k+1}\\
     -I  \end{bmatrix}-(\widehat{X}-X_{k+1})G_{k+1}(\widehat{X}-X_{k+1}).
\end{align}
Similarly, using the fact that $X_k\le X_{k+1}\le\widehat{X}$ and Lemma \ref{lem:increasing_Omega}, we have
\begin{align}\label{eq3.16b}
    \begin{bmatrix} \widehat{X} &  -I \end{bmatrix} \Omega(X_{k+1})\begin{bmatrix}
        \widehat{X}\\
     -I  \end{bmatrix}\le 0 \ \text{ and } \ -\begin{bmatrix} X_{k+1} &  -I \end{bmatrix} \Omega(X_{k+1})\begin{bmatrix}
        X_{k+1}\\
     -I  \end{bmatrix}\le 0.
\end{align}
\end{subequations}
Assume that $A_{k+1} - G_{k+1} \widehat{X}$ is not stable. Then there exist $y \neq 0$ and $\mbox{Re}(\lambda) \geq 0$ such that $(A_{k+1} - G_{k+1} \widehat{X}) y = \lambda y$. From \eqref{eq3.16} and $ X_{k+1}\le\widehat{X}$, we obtain that $0 \leq 2 \mbox{Re}(\lambda) y^{*} (\widehat{X}-X_{k+1}) y$ and hence,  
\begin{align*}
    0& =y^*\begin{bmatrix} \widehat{X} &  -I \end{bmatrix} \Omega(X_{k+1})\begin{bmatrix}
        \widehat{X}\\
     -I  \end{bmatrix}y\\
     &=y^*\left[(A_{k+1}^{\top} - \widehat{X}G_{k+1} )\widehat{X}+\widehat{X}(A_{k+1} - G_{k+1} \widehat{X})+\widehat{X} G_{k+1} \widehat{X}+H_{k+1}\right]y\\
     &=2 \mbox{Re}(\lambda) y^{*}\widehat{X}y+y^{*}\widehat{X} G_{k+1} \widehat{X}y+y^{*} H_{k+1} y.
\end{align*}
Since $\widehat{X}, H_{k+1}, G_{k+1} \geq 0$, we obtain that $H_{k+1} y = 0$ and $G_{k+1} \widehat{X} y = 0$. 
This means that
\begin{align*}
    \lambda y = (A_{k+1} - G_{k+1} \widehat{X}) y = A_{k+1} y, \quad y \neq 0, \quad \mbox{Re}(\lambda) \geq 0, \quad \mbox{ and } \quad H_{k+1} y = 0,
\end{align*}
which contradicts the detectability of $(H_{k+1}, A_{k+1})$. Therefore, $A_{k+1} - G_{k+1} \widehat{X}$ is stable. The induction process is complete. 

(ii). Since the sequence $\{X_k\}$ is monotonically nondecreasing and bounded above by $\widehat{X}$, we have $\lim_{k\rightarrow \infty}X_k=\widehat{X}_-$,  where $\widehat{X}_-\le \widehat{X}$ is a solution of SCARE \eqref{eq:Stoc-ARE-N}. 

(iii). By (i), all $A_{k} - G_{k} \widehat{X}_-$ are stable, by continuity $\sigma\left(A_c(\widehat{X}_-)-G_c(\widehat{X}_-)\widehat{X}_-\right)\subset \mathbb{C}_-\bigcup i\mathbb{R}$. 
\end{proof}

From another perspective, we can prove that $\{ X_k \}_{k=0}^{\infty}$ is monotonically nonincreasing if the initial $X_0$ satisfies some strict conditions.
\begin{theorem} \label{thm:monotonical_decreasing}
      Let $\widehat{X} \ge 0$ be a solution of SCARE \eqref{eq:Stoc-ARE-N}. Suppose that $\{ X_k \}_{k=0}^{\infty}$ is a sequence generated by Algorithm 1.  Let $X_0\ge \widehat{X}$ such that $A_0-G_0X_0$ is stable and $\mathcal{R}(X_0) \leq 0$.
 Then 
 \begin{description}
      \item[(i)] $X_k\ge X_{k+1}$, $X_k\ge \widehat{X}$, $\mathcal{R}(X_k)\le 0$ for each $k\ge 0$.
     \item[(ii)] $\lim_{k\rightarrow \infty}X_k=\widehat{X}_+$, where $\widehat{X}_+\ge \widehat{X}$ is a solution of SCARE \eqref{eq:Stoc-ARE-N}.
     \item[(iii)] $\sigma\left(A_c(\widehat{X}_+)-G_c(\widehat{X}_+)\widehat{X}_+\right)\subset \mathbb{C}_-\bigcup i\mathbb{R}$. 
 \end{description}
\end{theorem}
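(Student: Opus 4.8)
The plan is to run the dual of the induction behind Theorem~\ref{thm:monotonical_increasing}, reversing every order relation. Since $X_0\ge\widehat X\ge 0$, and inductively $X_k\ge 0$, Theorem~\ref{thm2.3} makes each $X_{k+1}$ well defined as the unique PSD stabilizing solution of $A_k^{\top}X+XA_k-XG_kX+H_k=0$, equivalently $\begin{bmatrix}X_{k+1}&-I\end{bmatrix}\Omega(X_k)\begin{bmatrix}X_{k+1}\\-I\end{bmatrix}=0$ (Remark~\ref{rem:equiv_form_SCARE}), with $A_k-G_kX_{k+1}$ stable. I would carry the induction hypothesis $P(k)$: $X_k\ge\widehat X$, $\mathcal{R}(X_k)\le 0$, and $A_k-G_kX_k$ stable. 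The base case $P(0)$ is exactly the hypothesis placed on $X_0$.

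For the inductive step, assume $P(k)$. First, $X_{k+1}\le X_k$: from the identity
\[
(A_k-G_kX_k)^{\top}(X_k-X_{k+1})+(X_k-X_{k+1})(A_k-G_kX_k)=\mathcal{R}(X_k)-(X_k-X_{k+1})G_k(X_k-X_{k+1})\le 0,
\]
and Lemma~\ref{lem3.2} applied with the stable matrix $A_k-G_kX_k$. Next, $X_{k+1}\ge\widehat X$: from the dual identity, using the stable matrix $A_k-G_kX_{k+1}$,
\[
(A_k-G_kX_{k+1})^{\top}(\widehat X-X_{k+1})+(\widehat X-X_{k+1})(A_k-G_kX_{k+1})=\begin{bmatrix}\widehat X&-I\end{bmatrix}\Omega(X_k)\begin{bmatrix}\widehat X\\-I\end{bmatrix}+(\widehat X-X_{k+1})G_k(\widehat X-X_{k+1}),
\]
whose right-hand side is PSD because the last term is PSD and, by Lemma~\ref{lem:increasing_Omega} (here $X_k\ge\widehat X\ge 0$) together with Remark~\ref{rem:equiv_form_SCARE}, $\begin{bmatrix}\widehat X&-I\end{bmatrix}\Omega(X_k)\begin{bmatrix}\widehat X\\-I\end{bmatrix}\ge\begin{bmatrix}\widehat X&-I\end{bmatrix}\Omega(\widehat X)\begin{bmatrix}\widehat X\\-I\end{bmatrix}=\mathcal{R}(\widehat X)=0$; applying Lemma~\ref{lem3.2} to $-(\widehat X-X_{k+1})$ then gives $X_{k+1}\ge\widehat X$. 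Finally $\mathcal{R}(X_{k+1})\le 0$ follows from $X_{k+1}\le X_k$ and Lemma~\ref{lem:increasing_Omega}: $\mathcal{R}(X_{k+1})=\begin{bmatrix}X_{k+1}&-I\end{bmatrix}\Omega(X_{k+1})\begin{bmatrix}X_{k+1}\\-I\end{bmatrix}\le\begin{bmatrix}X_{k+1}&-I\end{bmatrix}\Omega(X_k)\begin{bmatrix}X_{k+1}\\-I\end{bmatrix}=0$.

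The step I expect to be the main obstacle is closing the induction by showing that $A_{k+1}-G_{k+1}X_{k+1}$ is stable — the counterpart of the ``$A_{k+1}-G_{k+1}\widehat X$ is stable'' claim inside the proof of Theorem~\ref{thm:monotonical_increasing}. I would start from
\[
(A_{k+1}-G_{k+1}X_{k+1})^{\top}X_{k+1}+X_{k+1}(A_{k+1}-G_{k+1}X_{k+1})=\mathcal{R}(X_{k+1})-X_{k+1}G_{k+1}X_{k+1}-H_{k+1}\le 0,
\]
using $\mathcal{R}(X_{k+1})\le 0$, $X_{k+1}\ge 0$, $H_{k+1}\ge 0$, and then argue by contradiction: if $(A_{k+1}-G_{k+1}X_{k+1})y=\lambda y$ with $y\ne 0$ and $\mathrm{Re}(\lambda)\ge 0$, then $0\le 2\,\mathrm{Re}(\lambda)\,y^{*}X_{k+1}y=y^{*}\big(\mathcal{R}(X_{k+1})-X_{k+1}G_{k+1}X_{k+1}-H_{k+1}\big)y\le 0$, which forces $H_{k+1}y=0$ and $G_{k+1}X_{k+1}y=0$, hence $A_{k+1}y=\lambda y$ with $H_{k+1}y=0$, contradicting detectability of $(H_{k+1},A_{k+1})$ from Proposition~\ref{prop:detectable} (which applies since $X_{k+1}\ge 0$). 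This reproduces $P(k+1)$ and proves (i).

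Part (ii) is then immediate: $\{X_k\}$ is monotonically nonincreasing and bounded below by $\widehat X\ge 0$, hence converges to some $\widehat X_+\ge\widehat X\ge 0$; letting $k\to\infty$ in $\begin{bmatrix}X_{k+1}&-I\end{bmatrix}\Omega(X_k)\begin{bmatrix}X_{k+1}\\-I\end{bmatrix}=0$ and using continuity of $\Omega$ gives $\begin{bmatrix}\widehat X_+&-I\end{bmatrix}\Omega(\widehat X_+)\begin{bmatrix}\widehat X_+\\-I\end{bmatrix}=0$, i.e., $\widehat X_+$ solves SCARE~\eqref{eq:Stoc-ARE-N}. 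For (iii), each $A_k-G_kX_k$ is stable by (i) and $A_k-G_kX_k\to A_c(\widehat X_+)-G_c(\widehat X_+)\widehat X_+$ by continuity of $A_c,G_c$ (note $R_c(\cdot)\ge R>0$), so by continuity of eigenvalues the spectrum of the limit lies in $\mathbb{C}_-\cup i\mathbb{R}$, exactly as in part (iii) of Theorem~\ref{thm:monotonical_increasing}.
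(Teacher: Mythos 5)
Your proof is correct, and parts (ii)--(iii), the base step, the inequality $X_{k+1}\ge\widehat X$, and the derivation of $\mathcal{R}(X_{k+1})\le 0$ coincide with the paper's argument. The one place you genuinely diverge is the monotonicity step $X_{k+1}\le X_k$ for $k\ge 1$. You compare $X_k$ and $X_{k+1}$ through the CARE at level $k$, which forces you to carry ``$A_k-G_kX_k$ is stable'' in the induction hypothesis and to re-establish it at each step via the detectability of $(H_{k+1},A_{k+1})$ — the step you correctly identify as the main obstacle, and which you resolve with an argument that is essentially the paper's Lemma~\ref{lem:X_to_stable} (proved there only in the modified-Newton context, but equally valid here since it only uses $\mathcal{R}(X_{k+1})\le 0$, $X_{k+1}\ge 0$ and Proposition~\ref{prop:detectable}). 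The paper sidesteps this entirely: it compares $X_{k+1}$ and $X_k$ through the CARE at level $k-1$, writing
\begin{align*}
  ( A_{k-1}^{\top} - X_k G_{k-1})(X_{k+1} - X_k) + ( X_{k+1} - X_k)( A_{k-1} - G_{k-1} X_k)
  = (X_{k+1} - X_k) G_{k-1} ( X_{k+1} - X_k) + \begin{bmatrix} X_{k+1} & -I \end{bmatrix} \Omega(X_{k-1}) \begin{bmatrix} X_{k+1}\\ -I \end{bmatrix},
\end{align*}
where the right-hand side is PSD because $\Omega(X_{k-1})\ge\Omega(X_k)$ and $X_{k+1}$ annihilates $\Omega(X_k)$, and where $A_{k-1}-G_{k-1}X_k$ is stable for free, since $X_k$ is by construction the \emph{stabilizing} solution of the $(k-1)$st CARE. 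So the paper's route needs the stability hypothesis only for $A_0-G_0X_0$ and never invokes detectability inside this proof, while your route buys, as a by-product, the stronger statement that every closed-loop matrix $A_k-G_kX_k$ is stable. Both are sound; the paper's is slightly leaner, yours is self-contained and yields extra information.
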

\begin{proof}
    (i). We prove the assertion by mathematical induction on $k \geq 0$.
    First, we show that if $X_{k-1}\ge \widehat{X}$,   $X_k\ge \widehat{X}$. From \eqref{eq2.13}, we have
     \begin{align}\label{eq2.14}
     & ( A_{k-1}^{\top} - X_k G_{k-1})(\widehat{X} - X_k) + ( \widehat{X} - X_k)( A_{k-1} - G_{k-1} X_k)  \nonumber\\
     =& \ (\widehat{X} - X_k) G_{k-1} ( \widehat{X} - X_k) + \begin{bmatrix} \widehat{X} & -I \end{bmatrix} \Omega(X_{k-1}) \begin{bmatrix}
        \widehat{X}\\
     -I  \end{bmatrix}.
\end{align}
Since $X_{k-1}\ge \widehat{X}$ and $\widehat{X}$ is a solution of SCARE \eqref{eq:Stoc-ARE-N}, it follows from Lemma \ref{lem:increasing_Omega} that 
\begin{align*}
    \begin{bmatrix} \widehat{X} & -I \end{bmatrix} \Omega(X_{k-1}) \begin{bmatrix}
        \widehat{X}\\
     -I  \end{bmatrix}\ge \begin{bmatrix} \widehat{X} & -I \end{bmatrix} \Omega(\widehat{X}) \begin{bmatrix}
        \widehat{X}\\
     -I  \end{bmatrix}=0. 
\end{align*}
Since $G_{k-1}\ge 0$ and $A_{k-1} - G_{k-1} X_k$ is stable, it follows from \eqref{eq2.14} and Lemma \ref{lem3.2} that $X_k\ge \widehat{X}$. Since $X_0\ge \widehat{X}\ge 0$, we obtain that $X_k\ge \widehat{X}$ for each $k$. 

Next,  we show that the sequence $\{X_k\}$ is monotonically nonincreasing.  First, we show that 
$X_0\ge X_1$. Since $X_1$ is the stabilizing solution of 
\begin{align*}
    A_0^{\top}X+XA_0-XG_0X+H_0=0,
\end{align*}
we have
\begin{align*}
    &(A_0-G_0X_0)^{\top}(X_1-X_0)+(X_1-X_0)(A_0-G_0X_0)  
    =(X_0-X_1)G_0(X_0-X_1)-\mathcal{R}(X_0)\ge 0.  
\end{align*}
Since $(A_0-G_0X_0)$ is stable, from Lemma \ref{lem3.2}, $X_0\ge X_1$.
Suppose that $X_{k-1}\ge X_{k}\ge 0$ and $X_{k+1}$ stabilizing solution of 
\begin{align*}
 \begin{bmatrix} X &  -I \end{bmatrix} \Omega(X_{k})\begin{bmatrix}
        X\\
     -I  \end{bmatrix}=0.
\end{align*}
{\color{red} Since $X_k$ is stabilizing solution satisfying \eqref{eq2.13}, we have $A_{k-1} - G_{k-1} X_k$ is stable. On the other hand,}  
\begin{align*}
    & ( A_{k-1}^{\top} - X_k G_{k-1})(X_{k+1} - X_k) + ( X_{k+1} - X_k)( A_{k-1} - G_{k-1} X_k)  \nonumber\\
     =&  (X_{k+1} - X_k) G_{k-1} ( X_{k+1} - X_k) + \begin{bmatrix} X_{k+1} & -I \end{bmatrix} \Omega(X_{k-1}) \begin{bmatrix}
        X_{k+1}\\
     -I  \end{bmatrix}.
\end{align*}
From Lemma \ref{lem:increasing_Omega} and using the fact that $X_{k-1}\ge X_{k}$, we have
\begin{align*}
    \begin{bmatrix} X_{k+1} & -I \end{bmatrix} \Omega(X_{k-1}) \begin{bmatrix}
        X_{k+1}\\
     -I  \end{bmatrix}\ge \begin{bmatrix} X_{k+1} & -I \end{bmatrix} \Omega(X_k) \begin{bmatrix}
        X_{k+1}\\
     -I  \end{bmatrix}=0.
\end{align*}
Hence, $X_{k}\ge X_{k+1}$ by Lemma \ref{lem3.2}. Since $X_0\ge X_1$, we find that $\{ X_k \}_{k=0}^{\infty}$ is a monotonically nonincreasing sequence by mathematical induction. Finally, we show that $\mathcal{R}(X_k)\le 0$ for each $k$. Since  $\{ X_k \}_{k=0}^{\infty}$ is monotonically nonincreasing, from Lemma \ref{lem:increasing_Omega}  we have
\begin{align*}
    \mathcal{R}(X_k)=\begin{bmatrix} X_{k} & -I \end{bmatrix} \Omega(X_k) \begin{bmatrix}
        X_{k}\\
     -I  \end{bmatrix}\le \begin{bmatrix} X_{k} & -I \end{bmatrix} \Omega(X_{k-1}) \begin{bmatrix}
        X_{k}\\
     -I  \end{bmatrix}=0.
\end{align*}

(ii). Since the sequence $\{ X_k \}_{k=0}^{\infty}$ is monotonically nonincreasing and bounded below by $\widehat{X}$,  we have $\lim_{k\rightarrow \infty}X_k=\widehat{X}_{+}$, where $\widehat{X}_{+}$ is a solution of SCARE \eqref{eq:Stoc-ARE-N}. (iii). Using the fact that $A_k-G_kX_{k+1}$ is stable for each $k$, by continuity $\sigma\left(A_c(\widehat{X}_{+})-G_c(\widehat{X}_{+})\widehat{X}_{+}\right)\subset \mathbb{C}_-\bigcup i\mathbb{R}$. 
\end{proof}

\begin{remark}\label{rem3.1}
    From Theorems~\ref{thm:monotonical_increasing} and \ref{thm:monotonical_decreasing}, we obtain that $\widehat{X}_-$ and $\widehat{X}_+$ are minimal and maximal PSD solutions of SCARE \eqref{eq:Stoc-ARE-N}, respectively. In addition, we obtain that all eiganvalues of $A_c(\widehat{X}_-)-G_c(\widehat{X}_-)\widehat{X}_-$ and $A_c(\widehat{X}_+)-G_c(\widehat{X}_+)\widehat{X}_+$ are in $\mathbb{C}_-\bigcup i\mathbb{R}$. If the PSD solution of SCARE \eqref{eq:Stoc-ARE-N} is unique, we have $\widehat{X}_-=\widehat{X}_+$.
\end{remark}

From Theorems~\ref{thm:monotonical_increasing} and \ref{thm:monotonical_decreasing}, we proved that the sequence $\{ X_k \}_{k = 1}^{\infty}$ generated by the FP-CARE\_SDA   converges monotonically.
Suppose that
\begin{align*}
      \lim_{k \to \infty} X_k = \widehat{X} \geq 0.
\end{align*}
From \eqref{eq:kth-SARE_CARE}, we have the implicit relation between $X_k$ and $X_{k+1}$ as
\begin{align*}
      \mathcal{R}(X_k, X_{k+1}) &= A_{k}^{\top} X_{k+1} + X_{k+1} A_{k} - X_{k+1} G_k X_{k+1} + H_k = 0,  
\end{align*}
where $R_k = R + \Pi_{22}(X_{k})$, $G_k = B R_k^{-1} B^{\top}$, $A_k=A-BR_k^{-1}(L+\Pi_{12}(X_k))^{\top}$ and $H_k = Q + \Pi_{11}(X_{k}) - (L+\Pi_{12}(X_{k})) R_k^{-1} (L + \Pi_{12}(X_{k}))^{\top}$.
Let $\mathcal{H}^n$ be the set of all Hermitian matrices and 
\begin{align*}
     \mathcal{R}_{X_k}^{\prime} : \mathcal{H}^n \to \mathcal{H}^n, \quad \mathcal{R}_{X_{k+1}}^{\prime} : \mathcal{H}^n \to \mathcal{H}^n
\end{align*}
be the Fr\`{e}chet partial derivative with respect to $X_k$ and $X_{k+1}$, respectively, i.e.,
\begin{subequations}
\begin{align}
      \mathcal{R}_{X_k}^{\prime}(Z) &= \lim_{t \to 0} \frac{1}{t}(\mathcal{R}(X_k+tZ) - \mathcal{R}(X_k)), \label{eq:Fd_k}\\
      \mathcal{R}_{X_{k+1}}^{\prime}(Z) &= \lim_{t \to 0} \frac{1}{t}(\mathcal{R}(X_{k+1}+tZ) - \mathcal{R}(X_{k+1})).  \label{eq:Fd_k+1}
\end{align}
\end{subequations}
From the implicit function theorem in Banach space, it holds that
\begin{align}
      \mathcal{R}_{X_k}^{\prime}(Z) + \mathcal{R}_{X_{k+1}}^{\prime}(Z) \frac{d X_{k+1}}{d X_k} = 0. \label{eq:IFT_RXk}
\end{align}
From \eqref{eq:IFT_RXk}, we can derive the derivation of the explicit expression of $X_{k+1} := F(X_k)$.

From \eqref{eq:Fd_k+1} follows that
\begin{align}
     \mathcal{R}_{X_{k+1}}^{\prime}(Z) = (A_k^{\top} - X_{k+1} G_k) Z + Z(A_k - G_k X_{k+1}). \label{eq:R_k+1_prime}
\end{align}
Furthermore, the Fr\`{e}chet derivatives of $H_c(X)$, $G_c(X)$ and $A_c(X)$ at $X=X_k$ are 
\begin{subequations} \label{eq:lim}
\begin{align}
 H_{c}^{'}(X_k)(Z) =&\Pi_{11}(Z)-\Pi_{12}(Z)R_c(X_k)^{-1}L_c(X_k)^{\top}-L_c(X_k)R_c(X_k)^{-1}\Pi_{12}(Z)^{\top}\nonumber\\
 &+L_c(X_k)R_c(X_k)^{-1}\Pi_{22}(Z)R_c(X_k)^{-1}L_c(X_k)^{\top}\nonumber\\
 =&\left[I, -L_c(X_k)R_c(X_k)^{-1}\right]\Pi(Z)\left[\begin{array}{c}
      I \\
      -R_c(X_k)^{-1}L_c(X_k)^{\top}
 \end{array}\right],\label{eq:lim_H}\\
  G_{c}^{'}(X_k)(Z) =&-BR_c(X_k)^{-1}\Pi_{22}(Z)R_c(X_k)^{-1}B^{\top},\label{eq:lim_G}\\
  A_{c}^{'}(X_k)(Z) =&BR_c(X_k)^{-1}\Pi_{22}(Z)R_c(X_k)^{-1}L_c(X_k)^{\top}-BR_c(X_k)^{-1}\Pi_{12}(Z)^{\top}.\label{eq:lim_A}
\end{align}
\end{subequations}
From \eqref{eq:Fd_k} and \eqref{eq:lim}, we have
\begin{align}\label{eq3.18}
    \mathcal{R}_{X_{k}}^{\prime}(Z) =\left(A_{c}^{'}(X_k)(Z)\right)^{\top}X_{k+1}+X_{k+1}A_{c}^{'}(X_k)(Z)-X_{k+1}G_{c}^{'}(X_k)(Z)X_{k+1}+H_{c}^{'}(X_k)(Z)
\end{align}

Denote 
\begin{align*}
    \widehat{A} = A_c(\widehat{X}),\ \
    \widehat{G}=G_c(\widehat{X}),\ \
    \widehat{L}=L_c(\widehat{X}),\ \
    \widehat{R} =R_c(\widehat{X}).
\end{align*} 
From \eqref{eq:R_k+1_prime}, \eqref{eq:lim} and \eqref{eq3.18}, we set
\begin{align*}
     \mathcal{L}_{\widehat{X}}(Z) \equiv& ( \widehat{A}^{\top} - \widehat{X} \widehat{G}) Z + Z ( \widehat{A} - \widehat{G} \widehat{X}),\\
      \Psi_{\widehat{X}}(Z) \equiv &\left(\widehat{L}\widehat{R}^{-1}\Pi_{22}(Z)\widehat{R}^{-1}B^{\top}-\Pi_{12}(Z)\widehat{R}^{-1}B^{\top}\right)\widehat{X}+\widehat{X}\left(B\widehat{R}^{-1}\Pi_{22}(Z)\widehat{R}^{-1}\widehat{L}^{\top}-B\widehat{R}^{-1}\Pi_{12}(Z)^{\top}\right)\\
      &+\widehat{X}B\widehat{R}^{-1}\Pi_{22}(Z)\widehat{R}^{-1}B^{\top}\widehat{X}+\left[I, -\widehat{L}\widehat{R}^{-1}\right]\Pi(Z)\left[\begin{array}{c}
      I \\
      -\widehat{R}^{-1}\widehat{L}^{\top}
 \end{array}\right].
\end{align*}

\begin{theorem} \label{thm:R-linear-convergence}
Suppose $\lim_{k \to \infty} X_k = \widehat{X} \geq 0$ monotonically by SDA-CARE. If $\rho(\mathcal{L}_{\widehat{X}}^{-1} \Psi_{\widehat{X}}) < 1$, then $\{ X_k \}_{k=1}^{\infty}$ converges to $\widehat{X}$ R-linearly with
\begin{align*}
      \limsup_{k \to \infty} \sqrt[k]{\| X_k - \widehat{X} \|} \leq \widehat{\gamma} < 1
\end{align*}
for some $0 < \widehat{\gamma} < 1$ and $\| \cdot \|$ being any matrix norm.
\end{theorem}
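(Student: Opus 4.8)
The plan is to upgrade the monotone convergence $X_k\to\widehat{X}$ from Theorems~\ref{thm:monotonical_increasing} and \ref{thm:monotonical_decreasing} to a linear rate by showing that the iteration map $F$, with $X_{k+1}=F(X_k)$, is continuously Fr\'echet differentiable in a neighborhood of $\widehat{X}$ with $F'(\widehat{X})=-\mathcal{L}_{\widehat{X}}^{-1}\Psi_{\widehat{X}}$, and then invoking the standard fact that such a fixed-point iteration contracts in a suitable norm once the spectral radius of its derivative at the fixed point is below $1$.

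First I would make the implicit relation \eqref{eq:IFT_RXk} rigorous near $\widehat{X}$. Since $R_c(\widehat{X})=R+\Pi_{22}(\widehat{X})>0$, the maps $A_c,G_c,H_c$ and hence $(X_k,X_{k+1})\mapsto\mathcal{R}(X_k,X_{k+1})$ are real-analytic on a neighborhood of $(\widehat{X},\widehat{X})$ in $\mathcal{H}^n\times\mathcal{H}^n$, and by \eqref{eq:R_k+1_prime} the partial Fr\'echet derivative of $\mathcal{R}$ in its second argument at $(\widehat{X},\widehat{X})$ equals the Lyapunov operator $\mathcal{L}_{\widehat{X}}$. The hypothesis $\rho(\mathcal{L}_{\widehat{X}}^{-1}\Psi_{\widehat{X}})<1$ presupposes that $\mathcal{L}_{\widehat{X}}$ is invertible; because $\widehat{M}:=\widehat{A}-\widehat{G}\widehat{X}$ is a real matrix with $\sigma(\widehat{M})\subset\mathbb{C}_-\cup i\mathbb{R}$ by Theorems~\ref{thm:monotonical_increasing}(iii) and \ref{thm:monotonical_decreasing}(iii), and the eigenvalues of $Z\mapsto\widehat{M}^{\top}Z+Z\widehat{M}$ are precisely the pairwise sums $\mu_i+\mu_j$ of eigenvalues of $\widehat{M}$, invertibility of $\mathcal{L}_{\widehat{X}}$ forces $\sigma(\widehat{M})\cap i\mathbb{R}=\emptyset$, i.e. $\widehat{M}$ is in fact stable. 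The implicit function theorem in the Banach space $\mathcal{H}^n$ then yields a unique $C^1$ (indeed real-analytic) solution branch $F$ near $\widehat{X}$ with $F(\widehat{X})=\widehat{X}$, $\mathcal{R}(X,F(X))=0$, and, by \eqref{eq3.18}, $F'(\widehat{X})=-\mathcal{L}_{\widehat{X}}^{-1}\Psi_{\widehat{X}}$; since stability of $\widehat{M}$ is an open condition, $A_c(X)-G_c(X)F(X)$ stays stable for $X$ near $\widehat{X}$, so by the local uniqueness in the implicit function theorem $F$ coincides with the FP--CARE\_SDA map, which selects the stabilizing PSD CARE solution. From $X_k\to\widehat{X}$ we get $X_k$ inside this neighborhood for all $k\ge k_0$.

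Now set $\mathcal{T}:=F'(\widehat{X})$, so $\rho(\mathcal{T})=\rho(\mathcal{L}_{\widehat{X}}^{-1}\Psi_{\widehat{X}})<1$, and fix $\widehat{\gamma}$ with $\rho(\mathcal{T})<\widehat{\gamma}<1$. By the usual construction there is a vector norm $\|\cdot\|_\star$ on $\mathcal{H}^n$ whose induced operator norm satisfies $\|\mathcal{T}\|_\star\le\tfrac{1}{2}(\rho(\mathcal{T})+\widehat{\gamma})<\widehat{\gamma}$; by continuity of $F'$, shrink to a convex $\|\cdot\|_\star$-ball $\mathcal{N}$ around $\widehat{X}$ on which $\|F'(X)\|_\star\le\widehat{\gamma}$, and enlarge $k_0$ so that $X_k\in\mathcal{N}$ for all $k\ge k_0$. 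For $k\ge k_0$ the segment from $\widehat{X}$ to $X_k$ lies in $\mathcal{N}$, so the mean-value inequality gives
\[
\|X_{k+1}-\widehat{X}\|_\star=\|F(X_k)-F(\widehat{X})\|_\star\le\Big(\sup_{0\le t\le 1}\|F'(\widehat{X}+t(X_k-\widehat{X}))\|_\star\Big)\|X_k-\widehat{X}\|_\star\le\widehat{\gamma}\,\|X_k-\widehat{X}\|_\star .
\]
Iterating, $\|X_k-\widehat{X}\|_\star\le\widehat{\gamma}^{\,k-k_0}\,\|X_{k_0}-\widehat{X}\|_\star$ for $k\ge k_0$, whence $\limsup_{k\to\infty}\sqrt[k]{\|X_k-\widehat{X}\|_\star}\le\widehat{\gamma}$. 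Because $\mathcal{H}^n$ is finite-dimensional, $\|\cdot\|\le c\,\|\cdot\|_\star$ for some constant $c$ and every matrix norm $\|\cdot\|$, so the same root bound holds for $\|\cdot\|$; letting $\widehat{\gamma}\downarrow\rho(\mathcal{T})$ completes the proof.

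The main obstacle is the first paragraph: one has to certify that the iteration map $F$ --- which at each step picks the \emph{stabilizing} PSD CARE solution --- is genuinely continuously Fr\'echet differentiable at $\widehat{X}$ with derivative the operator $-\mathcal{L}_{\widehat{X}}^{-1}\Psi_{\widehat{X}}$ built from \eqref{eq:R_k+1_prime} and \eqref{eq3.18}. This hinges on the invertibility of $\mathcal{L}_{\widehat{X}}$, which the theorem's hypothesis tacitly assumes and which, as noted, promotes the semi-stability $\sigma(\widehat{M})\subset\mathbb{C}_-\cup i\mathbb{R}$ of Theorems~\ref{thm:monotonical_increasing}--\ref{thm:monotonical_decreasing} to genuine stability of $\widehat{M}=\widehat{A}-\widehat{G}\widehat{X}$; this stability is exactly what makes the stabilizing branch locally unique and smooth, so that the implicit function theorem applies. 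Everything after that --- passing to a norm in which $\mathcal{T}$ is a contraction, the mean-value estimate, and the root test --- is routine.
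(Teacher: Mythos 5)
Your proof follows essentially the same route as the paper's: define the iteration map $F$ with $X_{k+1}=F(X_k)$ via the implicit function theorem, identify $F'(\widehat{X})=-\mathcal{L}_{\widehat{X}}^{-1}\Psi_{\widehat{X}}$, and exploit $\rho(\mathcal{L}_{\widehat{X}}^{-1}\Psi_{\widehat{X}})<1$ to produce an adapted norm in which the iteration contracts, followed by the root test and norm equivalence. Your write-up is in fact more careful than the paper's at two points the paper glosses over --- you justify the invertibility of $\mathcal{L}_{\widehat{X}}$ (and hence the genuine stability of $\widehat{A}-\widehat{G}\widehat{X}$) needed to apply the implicit function theorem, and you use the mean-value inequality on a neighborhood where $\|F'\|_{\star}\le\widehat{\gamma}$ instead of iterating the first-order expansion while silently discarding the $o(\|X_k-\widehat{X}\|)$ remainder.
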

\begin{proof}
From implicit function theorem, there is a function $F$ such that
\begin{align}
     X_{k+1} = F(X_k), \quad \widehat{X} = F(\widehat{X}). \label{eq:fun_F}
\end{align}
Then
\begin{align*}
     X_{k+1} - \widehat{X} = F(X_{k}) - F(\widehat{X}) =  F^{\prime}(\widehat{X}) ( X_k - \widehat{X}) + o(\| X_k - \widehat{X} \|).
\end{align*}
By implicit function Theorem, we have
\begin{align}
     -F^{\prime}(\widehat{X})(Z) = \mathcal{L}_{\widehat{X}}^{-1} \Psi_{\widehat{X}}(Z). \label{eq:IFT_Fprime}
\end{align}
Since $\rho(\mathcal{L}_{\widehat{X}}^{-1} \Psi_{\widehat{X}}) < 1$, there exists a matrix norm $\| \cdot \hat{\|}$ and a constant $M$ such that 
$\| \cdot \| \leq M \| \cdot \hat{\|}$ and
\begin{align}
     \| \mathcal{L}_{\widehat{X}}^{-1} \Psi_{\widehat{X}} \hat{\|} < 1. \label{eq:nrm_lt_1}
\end{align}
Therefore, it implies
\begin{align*}
     \| X_{k+1} - \widehat{X} \| \leq \| F^{\prime}(\widehat{X})^k \| \| X_0 - \widehat{X} \| \leq M \| F^{\prime}(\widehat{X})^k \hat{\|} \| X_0 - \widehat{X} \|.
\end{align*}
That is
\begin{align*}
     \sqrt[k]{\| X_{k+1} - \widehat{X} \|} \leq M^{1/k} \| F^{\prime}(\widehat{X}) \hat{\|} \| X_0 - \widehat{X} \|^{1/k}.
\end{align*}
Hence, $\limsup_{k \to \infty} \sqrt[k]{\| X_k - \widehat{X} \|} \leq \widehat{\gamma} < 1$ for some $\widehat{\gamma} < 1$.
\end{proof}

\section{Newton  method and Modified Newton method} \label{sec:Newton_mNewton}
Newton's method for solving the SCARE \eqref{eq:Stoc-ARE-N} has been studied in \cite{dahi:2001}. The associated Newton's iteration is
\begin{align}
    X_{k+1} = X_k - (\mathcal{R}_{X_k}^{\prime})^{-1} \mathcal{R}(X_k), \ \mbox{ for } k = 0, 1, 2, \ldots, \label{eq:Newton_iteration}
\end{align}
provided that the Fr\`{e}chet  derivative $\mathcal{R}_{X_k}^{\prime}$ are all invertible. 
The iteration in \eqref{eq:Newton_iteration} is equivalent to solve $X_{k+1}$ of the nonlinear matrix equation
\begin{subequations}
\begin{align}
    A_{X_k}^{\top} X + X A_{X_k} + \Pi_{X_k}(X) = - M_{X_k}, \label{eq:equ_Newton_iter}
\end{align}
where
\begin{align}
    A_{X_k} &= A - B R_{k}^{-1} S_{X_k}^{\top}= A_k - G_k X_k, \label{eq:NT-AXk} \\
    \Pi_{X_k}(X) &= \begin{bmatrix} 
        I & - S_{X_k} R_{k}^{-1}
    \end{bmatrix} \Pi(X) \begin{bmatrix}
        I \\ - R_{k}^{-1} S_{X_k}^{\top}
    \end{bmatrix}, \label{eq:NT-PiX} \\
    M_{X_k} &= \begin{bmatrix} 
        I & - S_{X_k} R_{k}^{-1}
    \end{bmatrix} \begin{bmatrix}
        Q & L \\ L^{\top} & R
    \end{bmatrix} \begin{bmatrix}
        I \\ - R_{k}^{-1} S_{X_k}^{\top}
    \end{bmatrix} \label{eq:NT-MXk}
\end{align}
with
\begin{align}
    S_{X_k} = X_k B+L_c(X_k) \equiv X_k B + L_k.
\end{align}
\end{subequations}
The sequence $\{ X_k \}_{k=0}^{\infty}$ computed by Newton's method converges to the maximal solution $\widehat{X}_{+}$ of \eqref{eq:Stoc-ARE-N} under mild conditions.

\begin{theorem}[\cite{dahi:2001}] \label{thm:convergence_Newton}
    Assume that there exists a solution $\widehat{X}$ to $\mathcal{R}(X) \geq0$ and a stabilizing matrix $X_0$ (i.e., $A_{X_0}$ is stable and $\rho((A_{X_0}^{\top} X_0 + X_0 A_{X_0})^{-1}\Pi_{X_0}) < 1$). Then,
    \begin{description}
        \item[(i)] $X_k \geq X_{k+1} \geq \widehat{X}$, $\mathcal{R}(X_k) \leq 0$ for all $k$. 
        \item[(ii)] $\lim_{k \to \infty} X_k = \widehat{X}_+$ is the maximal solution of \eqref{eq:Stoc-ARE-N}.
    \end{description}
\end{theorem}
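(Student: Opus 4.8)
The plan is to reconstruct the standard monotone‑convergence argument for Newton's method on a concave matrix equation, adapted to the stochastic operator $\Pi(\cdot)$; two structural facts carry the whole proof and I would set them up first. First, a resolvent‑positivity statement for the Newton operator. Put $\mathcal{L}_M(Z)\equiv M^{\top}Z+ZM$ and $F_X\equiv -(R+\Pi_{22}(X))^{-1}(XB+L_c(X))^{\top}$, so that $A_{X}=A+BF_X$ in the notation of \eqref{eq:NT-AXk}. A direct computation from \eqref{eq:NT-PiX} shows that the Fr\'echet derivative appearing in \eqref{eq:equ_Newton_iter} is
\begin{align*}
    \mathcal{R}_X^{\prime}(Z)=\mathcal{T}_X(Z)\equiv A_X^{\top}Z+ZA_X+\Pi_X(Z)=(A+BF_X)^{\top}Z+Z(A+BF_X)+\sum_{i=1}^r(A_0^i+B_0^iF_X)^{\top}Z(A_0^i+B_0^iF_X),
\end{align*}
i.e.\ $\mathcal{T}_X$ is exactly the positive Lyapunov generator of condition (c1) with $F=F_X$. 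For such generators, exponential stability is equivalent to $-\mathcal{T}_X^{-1}$ being positivity preserving and to the Neumann‑series criterion $\rho(\mathcal{L}_{A_X}^{-1}\Pi_X)<1$; hence, when $X$ is stabilizing in the sense of the hypothesis, the Newton step \eqref{eq:equ_Newton_iter} has a unique solution and $\mathcal{T}_X(Z)\le 0$ forces $Z\ge 0$, while conversely a Lyapunov inequality $\mathcal{T}_X(Z)\le 0$ with $Z\ge 0$ returns stability of $\mathcal{T}_X$ via the detectability argument of Proposition~\ref{prop:detectable}. Second, I would prove that $\mathcal{R}$ is concave in the L\"owner order on $\{X=X^{\top}:R+\Pi_{22}(X)>0\}$: in \eqref{eq:Stoc-ARE-N} the part $A^{\top}X+XA+Q+\Pi_{11}(X)$ is affine in $X$, and the remaining term $-(XB+L+\Pi_{12}(X))(R+\Pi_{22}(X))^{-1}(XB+L+\Pi_{12}(X))^{\top}$ is the (jointly concave) matrix fractional function of the affine‑in‑$X$ data $XB+L+\Pi_{12}(X)$ and $R+\Pi_{22}(X)>0$. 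Concavity yields $\mathcal{R}(Y)\le\mathcal{R}(X)+\mathcal{R}_X^{\prime}(Y-X)$ for admissible $X,Y$, and taking $X=X_k$, $Y=X_{k+1}$ in the Newton step gives $\mathcal{R}(X_{k+1})\le 0$ at once.

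With these two ingredients I would prove by induction on $k$ that $X_k$ is stabilizing, $\mathcal{R}(X_k)\le 0$, and $X_k\ge\widehat X$. For the base case, $X_0$ stabilizing makes $X_1$ well defined, concavity gives $\mathcal{R}(X_1)\le 0$, and subtracting the Newton equation from the completed‑square expansion of $\mathcal{R}(\widehat X)$ about $X_0$ yields $\mathcal{T}_{X_0}(X_1-\widehat X)=-\mathcal{R}(\widehat X)-\Theta_0$ with $\Theta_0\ge 0$ a PSD quadratic in $X_1-\widehat X$ (using $\Pi\ge 0$); since $\mathcal{R}(\widehat X)\ge 0$, the right‑hand side is $\le 0$, so $X_1\ge\widehat X$ by resolvent positivity. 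In the inductive step, $X_{k+1}\ge\widehat X$ is the same computation with $X_0$ replaced by $X_k$; the monotonicity $X_{k+1}\le X_k$ comes from $\mathcal{T}_{X_k}(X_{k+1}-X_k)=-\mathcal{R}(X_k)\ge 0$ together with resolvent positivity (which for $k\ge 1$ uses $\mathcal{R}(X_k)\le 0$ from the previous step; the initial comparison $X_1\le X_0$ additionally uses $\mathcal{R}(X_0)\le 0$, which one takes as part of the admissibility of the stabilizing initial point). Finally, to propagate the stabilizing property I would combine $\widehat X\le X_{k+1}\le X_k$ with the monotonicity of the coefficient maps (Lemma~\ref{lem:increasing_Omega} and \eqref{eq:assumption_Pi}) and with concavity of $\mathcal{R}$ to obtain $\mathcal{T}_{X_{k+1}}(X_{k+1}-\widehat X)\le 0$ with $X_{k+1}-\widehat X\ge 0$; the detectability argument of Proposition~\ref{prop:detectable} then rules out closed‑loop modes in the closed right half‑plane and certifies that $\mathcal{L}_{F_{X_{k+1}}}$ is exponentially stable, i.e.\ $X_{k+1}$ is stabilizing. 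This closes the induction and establishes (i).

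Part (ii) is then soft analysis. The sequence $\{X_k\}$ is monotonically nonincreasing and bounded below by $\widehat X\ge 0$, hence converges to some $X_\star$ with $\widehat X\le X_\star\le X_0$; on this order interval the coefficient matrices and $(R+\Pi_{22}(X_k))^{-1}$ stay bounded and converge, so passing to the limit in \eqref{eq:equ_Newton_iter} gives $\mathcal{R}(X_\star)=0$. For maximality, any PSD solution $Y$ of \eqref{eq:Stoc-ARE-N} satisfies $\mathcal{R}(Y)=0\ge 0$, so $Y$ may be used in place of $\widehat X$ throughout part (i); hence $X_k\ge Y$ for all $k$ and $X_\star\ge Y$, so $X_\star=\widehat X_+$ is the maximal solution.

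The hard part will be the stochastic term $\Pi(\cdot)$ in exactly the two places where the deterministic theory is cleanest. In the deterministic CARE, $\mathcal{R}_{X_k}^{\prime}$ is an ordinary Lyapunov operator whose resolvent positivity is classical and $\mathcal{R}(X_{k+1})=-(X_{k+1}-X_k)G_k(X_{k+1}-X_k)\le 0$ drops out of a one‑line identity; here $\mathcal{T}_X=\mathcal{L}_{A_X}+\Pi_X$ is a genuinely perturbed generator, so its resolvent positivity must be routed through $\rho(\mathcal{L}_{A_X}^{-1}\Pi_X)<1$ and a Neumann series of positive operators, and — more delicately — \emph{propagating} this spectral condition from $X_k$ to $X_{k+1}$, rather than merely propagating stability of a single matrix $A_{X_k}$, requires the monotonicity estimates of Section~\ref{sec:conv-FP-CARE-SDA} together with a careful treatment of possible imaginary‑axis behaviour, much as in Theorems~\ref{thm:monotonical_increasing}--\ref{thm:monotonical_decreasing}. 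Establishing the L\"owner concavity of $\mathcal{R}$ with the $X$‑dependent weight $R+\Pi_{22}(X)$ in place of a constant matrix is routine but underpins the step $\mathcal{R}(X_{k+1})\le 0$, so it must be carried out carefully.
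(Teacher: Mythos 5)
The paper itself gives no proof of Theorem~\ref{thm:convergence_Newton}: it is quoted verbatim from \cite{dahi:2001}, so there is nothing internal to compare your argument against. Your reconstruction follows exactly the Damm--Hinrichsen template that the citation refers to --- resolvent positivity of the closed-loop generator $\mathcal{T}_X=\mathcal{L}_{A_X}+\Pi_X$, L\"owner concavity of $\mathcal{R}$, an induction establishing $\mathcal{R}(X_k)\le 0$ and $X_k\ge\widehat X$, and maximality by running part (i) with an arbitrary PSD solution in place of $\widehat X$ --- and those pieces are set up correctly. Your remark that the $k=0$ instance of (i) (namely $X_0\ge X_1$ and $\mathcal{R}(X_0)\le 0$) does not follow from ``$X_0$ stabilizing'' alone is also right; in \cite{dahi:2001} the monotonicity and negativity assertions start at $k=1$, and the statement as transcribed in the paper is slightly loose on this point.

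The one genuine gap is the step you yourself flag as ``the hard part'': propagating the \emph{stabilizing} property to $X_{k+1}$. The route you propose --- derive $\mathcal{T}_{X_{k+1}}(X_{k+1}-\widehat X)\le 0$ with $X_{k+1}-\widehat X\ge 0$ and then invoke ``the detectability argument of Proposition~\ref{prop:detectable}'' --- does not close the induction for two reasons. First, it imports hypotheses (${\rm Ker}\,C\subseteq(\bigcap_i{\rm Ker}\,A_0^i)\cap{\rm Ker}\,L$ and detectability of $(C,A)$) that are not among the assumptions of Theorem~\ref{thm:convergence_Newton}. Second, and more importantly, a detectability argument of that type only excludes closed-right-half-plane eigenvalues of the \emph{matrix} $A_{X_{k+1}}$, whereas ``stabilizing'' here means stability of the full perturbed operator, i.e.\ $A_{X_{k+1}}$ stable \emph{and} $\rho(\mathcal{L}_{A_{X_{k+1}}}^{-1}\Pi_{X_{k+1}})<1$; the inequality $\mathcal{T}_{X_{k+1}}(V)\le 0$ with $V\ge 0$ alone is not sufficient for stability of a resolvent-positive operator (constant operators on the kernel of $V$ are not excluded). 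In \cite{dahi:2001} this step is closed entirely inside the theory of resolvent-positive operators, via a Lyapunov-type characterization (existence of $V$ with $\mathcal{T}(V)<0$ in a suitable sense, obtained from $\mathcal{R}(\widehat X)\ge 0$ together with the strict negativity produced by the quadratic remainder of the Newton step), with no detectability hypothesis. Until that operator-level stability transfer is supplied, your induction establishes the order relations but not that each Newton step is well defined, so the proof as written is incomplete at precisely the point you identified.
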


The authors in \cite{dahi:2001} provided the convergence analysis in Theorem~\ref{thm:convergence_Newton}. However, it lacks the numerical methods for solving \eqref{eq:equ_Newton_iter}.
A method of solving \eqref{eq:equ_Newton_iter} is the following fixed-point iteration
\begin{align}
    A_{X_k}^{\top} Y_{j+1} + Y_{j+1} A_{X_k} = - (\Pi_{X_k}(Y_j) + M_{X_k})  \label{eq:Lyapunov} 
\end{align}
for $j = 0, 1, 2, \ldots$ with $Y_0 = X_k$.

The Lyapunov equation in \eqref{eq:Lyapunov} is a special case of CARE. As shown in \cite{hull:2018},  we can find the 
M\"{o}bius transformation $g(z) = \frac{z+\alpha}{z-\alpha}$ with $\alpha > 0$ to transform \eqref{eq:Lyapunov} into a DARE
\begin{subequations} \label{eq:DARE_lyap}
\begin{align}
    Y  = A_d^{\top} Y  A_d + H_d 
\end{align}
with
\begin{align}
    A_d &= I + 2 \alpha ( A_{X_k} - \alpha I)^{-1}, \\
    H_d &=  2 \alpha ( A_{X_k}^{\top} - \alpha I)^{-1} (\Pi_{X_k}(Y_j) + M_{X_k}) ( A_{X_k} - \alpha I)^{-1}.
\end{align}
\end{subequations}
Therefore, the Lyapunov equation \eqref{eq:Lyapunov} can be solved efficiently by L-SDA in Algorithm~\ref{alg:Lyapunov_SDA}. 
\begin{algorithm}
  \begin{algorithmic}[1]
    \REQUIRE Coefficient matrices $C \geq 0$ and $E$ (stable), parameter $\alpha>0$, the stopping tolerance $\varepsilon$.
    \ENSURE The solution $Y$.
    \STATE Compute $E_0 = I + 2 \alpha (E - \alpha I)^{-1}$ and $Y_0 = 2 \alpha (E^{\top} - \alpha I)^{-1} C ( E - \alpha I)^{-1}$.
    \STATE Set $k = 0$.
    \REPEAT
    \STATE Compute $E_{k+1} = E_k^2$ and $Y_{k+1} = Y_k + E_k^{\top} Y_k E_k$.
    \STATE Compute the normalized residual $\delta$ of Lyapunov equation.
    \STATE Set $k = k + 1$.
    \UNTIL{$\delta < \varepsilon$.} 
    \STATE Set $Y = Y_k$.
  \end{algorithmic}
  \caption{L-SDA for solving Lyapunov equation $E^{\top} Y + Y E + C = 0$}
  \label{alg:Lyapunov_SDA}
\end{algorithm}

\begin{algorithm}
\caption{Newton fixed-point Lyapunov (NT-FP-Lyap\_SDA) method for solving SCARE \eqref{eq:Stoc-ARE-N} } \label{alg:Newton fixed-point Lyapunov}
\begin{algorithmic}[1]
\REQUIRE $A, Q \in \mathbb{R}^{n \times n}$, $B, L  \in \mathbb{R}^{n \times m}$, $R = R^{\top} \in \mathbb{R}^{m \times m}$, $A_0^i \in \mathbb{R}^{n \times n}$, $B_0^i \in \mathbb{R}^{n \times m}$ for $i = 1, \ldots, r$, initial $X_0^{\top} = X_0$, and  a tolerance $\varepsilon$.
\ENSURE Solution $X$.
\STATE Set $k = 0$ and $\delta = \infty$.
\WHILE{$\delta>\varepsilon$} \label{alg:outer_NT_while}
\STATE Compute $S_k = X_k B + L + \Pi_{12}(X_k)$,  $R_k = \Pi_{22}(X_k) + R$.
\STATE Compute $A_{X_k} = A - B R_k^{-1} S_k^{\top}$, $P_k = [I, -S_k R_k^{-\top}]^{\top}$ and $M_k =  P_k^{\top} \begin{bmatrix} 
    Q & L \\ L^{\top} & R
\end{bmatrix} P_k$.
\STATE Set $Y_{0} = X_k$, $j=0$ and $\delta_f = \infty$.
\WHILE{$\delta_f > \varepsilon$} \label{alg:inner_NT_FP_while}
\STATE Compute $\Pi_{Y_j} = P_k^{\top} \Pi(Y_{j}) P_k$ and $C_{j} = \Pi_{Y_{j}} + M_k$.
\STATE Use L-SDA to solve the stabilizing solution $Y_{{j+1}}$ of the Lyapunov equation 
\begin{align*}
      A_{X_k}^{\top} Y + Y A_{X_k}  + C_{j}  = 0.
\end{align*}
\STATE Compute the normalized residual $\delta_f$ of \eqref{eq:equ_Newton_iter} with $X = Y_{{j+1}}$ and set $j = j + 1$.
\ENDWHILE \label{alg:inner_NT_FP_end}
\STATE Set $X_{k+1} = Y_{j}$ and compute the normalized residual $\delta$ of \eqref{eq:Stoc-ARE-N} with $X = X_{k+1}$.
\STATE Set $k = k + 1$.
\ENDWHILE \label{alg:outer_NT_end}
\STATE Set $X = X_k$.
\end{algorithmic}
\end{algorithm}

Algorithm~\ref{alg:Newton fixed-point Lyapunov}, called the NT-FP-Lyap\_SDA algorithm,  states the processes of solving SCARE in \eqref{eq:Stoc-ARE-N} by combining Newton's iteration in \eqref{eq:equ_Newton_iter} with fixed-point iteration in \eqref{eq:Lyapunov}. 
Steps ~\ref{alg:outer_NT_while}-\ref{alg:outer_NT_end} are the outer iterations of Newton's method. The inner iterations in Steps~\ref{alg:inner_NT_FP_while}-\ref{alg:inner_NT_FP_end} are the fixed-point iteration to solve the nonlinear matrix equation in \eqref{eq:equ_Newton_iter}.

It is well-known that the fixed-point iteration in \eqref{eq:Lyapunov} has only a linear convergence if it converges. Even if the convergence of Newton's method is quadratic, the overall iterations for solving \eqref{eq:equ_Newton_iter} by \eqref{eq:Lyapunov} can be large. To reduce the iterations, as the proposed method in \cite{guo:2002a}, we modify the Newton's iteration in \eqref{eq:equ_Newton_iter} as
\begin{align}
    A_{X_k}^{\top} X + X A_{X_k} = -\Pi_{X_k}(X_k) - M_{X_k} \label{eq:m_Newton_iter}
\end{align}
and solve it by L-SDA in Algorithm~\ref{alg:Lyapunov_SDA}.  Solving \eqref{eq:m_Newton_iter} is equival to solve \eqref{eq:equ_Newton_iter} by using one iteration of fixed-point iteration in \eqref{eq:Lyapunov} with $Y_0 = X_k$. We state the modified Newton's iteration in Algorithm~\ref{alg:mNewton Lyapunov} and call it as mNT-FP-Lyap\_SDA algorithm.

\begin{algorithm}
\caption{mNT-FP-Lyap\_SDA  method for solving SCARE \eqref{eq:Stoc-ARE-N} } \label{alg:mNewton Lyapunov}
\begin{algorithmic}[1]
\REQUIRE $A, Q \in \mathbb{R}^{n \times n}$, $B, L  \in \mathbb{R}^{n \times m}$, $R = R^{\top} \in \mathbb{R}^{m \times m}$, $A_0^i \in \mathbb{R}^{n \times n}$, $B_0^i \in \mathbb{R}^{n \times m}$ for $i = 1, \ldots, r$, and  a tolerance $\varepsilon$.
\ENSURE Solution $X$.
\STATE Set $k = 0$ and $\delta = \infty$.
\STATE Choose an initial $X_0^{\top} = X_0$.
%
\WHILE{$\delta>\varepsilon$}
\STATE Compute $S_k = X_k B + L + \Pi_{12}(X_k)$,  $R_k = \Pi_{22}(X_k) + R$.
\STATE Compute $A_{X_k} = A - B R_k^{-1} S_k^{\top}$, $P_k = [I, -S_k R_k^{-\top}]^{\top}$ and $M_k =  P_k^{\top} \begin{bmatrix} 
    Q & L \\ L^{\top} & R
\end{bmatrix} P_k$.
\STATE Compute $C_{k} = P_k^{\top} \Pi(X_{k}) P_k + M_k$.
\STATE Use L-SDA to solve the stabilizing solution $X_{{k+1}}$ of the Lyapunov equation 
\begin{align*}
      A_{X_k}^{\top} X + X A_{X_k}  + C_{k}  = 0.
\end{align*}
\STATE Compute the normalized residual $\delta$ of \eqref{eq:Stoc-ARE-N} with $X = X_{k+1}$.
\STATE Set $k = k + 1$.
\ENDWHILE 
\STATE Set $X = X_k$.
\end{algorithmic}
\end{algorithm}

In Theorem~\ref{thm:convergence_Newton}, a stabilizing matrix $X_0$ is needed to guarantee the convergence of Newton's method. However, the initial $X_0$ is generally not easy to find. To tackle this drawback, we integrate FP-CARE\_SDA (FPC) method in Algorithm~\ref{alg:SDA-CARE} into Algorithm~\ref{alg:Newton fixed-point Lyapunov}. We take a few iterations of Algorithm~\ref{alg:SDA-CARE}, e.g. $k = 5$, to obtain $X_k$ and then use $X_k$ as an initial $X_0$ in Algorithm~\ref{alg:Newton fixed-point Lyapunov}. We summarize it in Algorithm~\ref{alg:CARE Newton fixed-point Lyapunov}, called the FPC-NT-FP-Lyap\_SDA method.

\begin{algorithm}
\caption{FPC-NT-FP-Lyap\_SDA method for solving SCARE \eqref{eq:Stoc-ARE-N} } \label{alg:CARE Newton fixed-point Lyapunov}
\begin{algorithmic}[1]
\REQUIRE $A, Q \in \mathbb{R}^{n \times n}$, $B, L  \in \mathbb{R}^{n \times m}$, $R = R^{\top} \in \mathbb{R}^{m \times m}$, $A_0^i \in \mathbb{R}^{n \times n}$, $B_0^i \in \mathbb{R}^{n \times m}$ for $i = 1, \ldots, r$, and  a tolerance $\varepsilon$.
\ENSURE Solution $X$.
\STATE Set $k = 0$ and $\delta = \infty$.
\STATE Choose an initial $X_0^{\top} = X_0$.
\STATE Use FP-CARE\_SDA algorithm (Algorithm~\ref{alg:SDA-CARE}) with initial $X_0$ to compute an approximated $X_k$ satisfied $\| X_k - X_{k-1} \| < 0.01$.
\STATE Use NT-FP-Lyap\_SDA algorithm (Algorithm~\ref{alg:Newton fixed-point Lyapunov}) with initial matrix $X_k$ to solve SCARE \eqref{eq:Stoc-ARE-N}.
%
\end{algorithmic}
\end{algorithm}

\section{Convergence Analysis for Modified Newton Method} \label{sec:conv_mNewton}
In this section, we study the convergence of the modified Newton's method. For the special case that $L = 0$ and $B_0^i = 0$, $i = 1, \ldots, r$, Guo has shown the following result in \cite{guo:2002a}.
\begin{theorem}{{\rm \cite{guo:2002a}}}
    Let $\widehat{X} \ge 0$ be the solution of SCARE \eqref{eq:Stoc-ARE-N} with $L = 0$ and $B_0^i = 0$, $i = 1, \ldots, r$. Assume $X_0 \geq \widehat{X}$ such that $A_0 - G_0 X_0$ is stable and $\mathcal{R}(X_0) \leq 0$. Then $\{ X_k \}_{k=0}^{\infty}$ generated by mNT-FP-Lyap\_SDA algorithm is satisfied $\mathcal{R}(X_k) \leq 0$, $X_0 \geq X_1 \geq \cdots \geq X_k \geq \cdots \geq \widehat{X}$, and $\lim_{k \to \infty} X_k = \widehat{X}_+$, where $\widehat{X}_+$ is the maximal solution of SCARE \eqref{eq:Stoc-ARE-N}.
\end{theorem}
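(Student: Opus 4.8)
The plan is to cash in the drastic simplification that $L=0$ and $B_0^i=0$ impose on the data: then $\Pi_{12}(X)\equiv 0$ and $\Pi_{22}(X)\equiv 0$, so $R_c(X)\equiv R$, $L_c(X)\equiv 0$, $A_c(X)\equiv A$ and $G_c(X)\equiv G:=BR^{-1}B^{\top}$ are \emph{constant}, while $H_c(X)=Q+\Pi_{11}(X)$ with $Q=C^{\top}C\ge 0$. Hence $\mathcal R(X)=A^{\top}X+XA-XGX+Q+\Pi_{11}(X)$, and unwinding Algorithm~\ref{alg:mNewton Lyapunov} (here $S_k=X_kB$, $A_{X_k}=A-GX_k=:\mathcal A_k$, $\Pi_{X_k}(X)=\Pi_{11}(X)$, $M_{X_k}=Q+X_kGX_k$) shows that $X_{k+1}$ is the solution of the Lyapunov equation
\[
   \mathcal A_k^{\top}X_{k+1}+X_{k+1}\mathcal A_k+\bigl(Q+\Pi_{11}(X_k)+X_kGX_k\bigr)=0 ,
\]
that is, the modified Newton step for $\mathcal R(X)=0$ with the linear term $\Pi_{11}$ frozen.

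Next I would record a handful of identities obtained by subtracting this defining equation from its shifted copies and using $\mathcal R(\widehat{X})=0$ together with linearity of $\Pi_{11}$:
\begin{align*}
   \mathcal A_k^{\top}(X_{k+1}-X_k)+(X_{k+1}-X_k)\mathcal A_k &= -\mathcal R(X_k),\\
   \mathcal A_k^{\top}(X_{k+1}-\widehat{X})+(X_{k+1}-\widehat{X})\mathcal A_k &= -(X_k-\widehat{X})G(X_k-\widehat{X})-\Pi_{11}(X_k-\widehat{X}),\\
   \mathcal R(X_{k+1}) &= -(X_{k+1}-X_k)G(X_{k+1}-X_k)+\Pi_{11}(X_{k+1}-X_k),
\end{align*}
together with $\mathcal A_k^{\top}X_k+X_k\mathcal A_k=\mathcal R(X_k)-C^{\top}C-\Pi_{11}(X_k)-X_kGX_k$, which will power the stability argument.

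The heart of the proof is a single induction whose loop invariant is: $\mathcal A_k$ is stable, $X_k\ge\widehat{X}$, and $\mathcal R(X_k)\le 0$; the base case $k=0$ is exactly the hypothesis. In the inductive step, the first identity with $\mathcal R(X_k)\le 0$ and Lemma~\ref{lem3.2} (applied to $-(X_{k+1}-X_k)$) gives $X_{k+1}\le X_k$; the second identity, using $X_k\ge\widehat{X}$ so that $\Pi_{11}(X_k-\widehat{X})\ge 0$, with Lemma~\ref{lem3.2} gives $X_{k+1}\ge\widehat{X}$; the third identity together with $X_{k+1}\le X_k$ (so $\Pi_{11}(X_{k+1}-X_k)\le 0$) gives $\mathcal R(X_{k+1})\le 0$. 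The step I expect to be the main obstacle is re-establishing stability of $\mathcal A_{k+1}=A-GX_{k+1}$, since every Lyapunov comparison above is legitimate only when its coefficient matrix is stable. I would handle it by an eigenvector argument in the spirit of Proposition~\ref{prop:detectable}: if $\mathcal A_{k+1}y=\lambda y$ with $\mathrm{Re}\,\lambda\ge 0$ and $y\neq 0$, then pairing $y$ with $\mathcal A_{k+1}^{\top}X_{k+1}+X_{k+1}\mathcal A_{k+1}=\mathcal R(X_{k+1})-C^{\top}C-\Pi_{11}(X_{k+1})-X_{k+1}GX_{k+1}$ puts $2\,\mathrm{Re}(\lambda)\,y^{*}X_{k+1}y\ge 0$ on the left and a sum of nonpositive terms on the right, forcing each to vanish; in particular $Cy=0$ and $GX_{k+1}y=0$, so $\mathcal A_{k+1}y=Ay=\lambda y$ with $Cy=0$, contradicting detectability of $(C,A)$ (a consequence of (c2)). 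This closes the induction and yields $\mathcal R(X_k)\le 0$ and $X_0\ge X_1\ge\cdots\ge\widehat{X}$ for all $k$.

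Finally, being monotonically nonincreasing and bounded below by $\widehat{X}\ge 0$, the sequence converges to some $\widehat{X}_+\ge\widehat{X}$; letting $k\to\infty$ in the third identity, $X_{k+1}-X_k\to 0$ forces $\mathcal R(\widehat{X}_+)=0$, so $\widehat{X}_+$ is a PSD solution. For maximality I would compare with the full Newton iteration of Section~\ref{sec:Newton_mNewton} started at the same $X_0$: subtracting the Newton equation from the modified Newton equation and using $X_{k+1}^{\mathrm{NT}}\le X_k$, the monotonicity of $\Pi_{11}$, and Lemma~\ref{lem3.2} shows the modified Newton iterate dominates the Newton iterate at each step, so the limit dominates the maximal solution produced by Theorem~\ref{thm:convergence_Newton} while remaining $\le X_0$; a PSD solution that dominates the maximal solution is the maximal solution. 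Equivalently, maximality can be read off directly from the residual-difference identity $\mathcal R(Y_1)-\mathcal R(Y_2)=(A-GY_2)^{\top}(Y_1-Y_2)+(Y_1-Y_2)(A-GY_2)-(Y_1-Y_2)G(Y_1-Y_2)+\Pi_{11}(Y_1-Y_2)$ combined with the spectral bound $\sigma(A-G\widehat{X}_+)\subset\mathbb{C}_-\cup i\mathbb{R}$, obtained by continuity as in Theorem~\ref{thm:monotonical_decreasing}(iii).
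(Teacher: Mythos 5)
Your reduction of the data under $L=0$, $B_0^i=0$ is correct ($R_c\equiv R$, $L_c\equiv 0$, $A_c\equiv A$, $G_c\equiv G$, $\Pi_{X_k}=\Pi_{11}$, $M_{X_k}=Q+X_kGX_k$), your three identities all check out, and the induction (monotone nonincreasing, $X_k\ge\widehat{X}$, $\mathcal R(X_k)\le 0$, stability of $A-GX_{k+1}$ via the eigenvector argument with $Cy=0$ and $GX_{k+1}y=0$) is sound. The paper does not prove this theorem --- it is quoted from \cite{guo:2002a} --- but your argument is in substance the specialization to $L=0$, $B_0^i=0$ of the paper's proof of Theorem~\ref{thm:monotonical_decreasing_mNT}, stripped of the $\Omega(\cdot)$ formalism of Lemma~\ref{lem:increasing_Omega} and replaced by direct Lyapunov identities. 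One genuine improvement over the general theorem: because you establish $X_{k+1}\ge\widehat{X}\ge 0$ \emph{inside} the induction before invoking the stability argument, you do not need the posterior hypothesis ``$X_k\ge 0$ for all $k$'' that the paper must assume in Lemma~\ref{lem:X_to_stable} and Theorem~\ref{thm:monotonical_decreasing_mNT}; that is exactly why the special case can be stated without it. (Your appeal to detectability of $(C,A)$ is the same standing hypothesis the paper uses in Proposition~\ref{prop:detectable} and Lemma~\ref{lem:X_to_stable}, and in this special case the kernel condition on $C$ is not even needed since $A_c\equiv A$.)

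The gap is the final claim that $\widehat{X}_+$ is the \emph{maximal} solution; neither of your two sketches closes it. Your induction only yields $X_k\ge\widehat{X}$ for the \emph{particular} solution $\widehat{X}$ dominated by $X_0$; to get maximality you would need $X_0\ge Y$ for \emph{every} PSD solution $Y$, and the usual way to extract that from ``$A-GX_0$ stable and $\mathcal R(X_0)\le 0$'' is the residual-difference identity --- but there the term $\Pi_{11}(Y-X_0)$ has no sign when $A_0^i\neq 0$, so the deterministic argument does not carry over. Your second route fails for the same reason at the limit: with $Y_2=\widehat{X}_+$ the identity leaves you with $(A-G\widehat{X}_+)^{\top}\Delta+\Delta(A-G\widehat{X}_+)=\Delta G\Delta-\Pi_{11}(\Delta)$, whose right-hand side is indefinite, and $A-G\widehat{X}_+$ is only known to be \emph{marginally} stable, so Lemma~\ref{lem3.2} is not applicable anyway. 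Your first route silently assumes the hypotheses of Theorem~\ref{thm:convergence_Newton} (a stochastically stabilizing $X_0$ with $\rho((A_{X_0}^{\top}X_0+X_0A_{X_0})^{-1}\Pi_{X_0})<1$), which are not granted here, and the step-by-step domination of the Newton iterate is not actually carried out (the closed-loop matrices $A-GX_k^{\mathrm{NT}}$ and $A-GX_k^{\mathrm{mNT}}$ diverge after the first step). Note the paper's own general result, Theorem~\ref{thm:monotonical_decreasing_mNT}, likewise only concludes that the limit is \emph{a} solution $\ge\widehat{X}$; maximality in the quoted theorem should be either cited from \cite{guo:2002a} or obtained under a strengthened hypothesis on $X_0$.
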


Now, we consider the general case, i.e., $B_0^i \neq 0$. Here, we assume that the sequence $\{ X_k \}_{k=0}^{\infty}$ exists and $X_k \geq 0$ for all $k$.
Under this posterior assumption, we have the monotonic convergence of $\{ X_k \}_{k=0}^{\infty}$.

\begin{lemma} \label{lem:X_to_stable}
     Let $C^{\top} C= Q - L R^{-1} L^{\top}$. Assume that 
     ${\rm Ker}C \subseteq \left(\bigcap_{i=1}^r{\rm Ker} A_0^i\right)\bigcap {\rm Ker}L$ and $(C, A)$ is detectable.  Suppose that $\{ X_k \}_{k=0}^{\infty}$ is a sequence generated by mNT-FP-Lyap\_SDA algorithm (Algorithm~\ref{alg:mNewton Lyapunov}). Then $A_k - G_k X_k$ is stable if $X_{k+1} \geq 0$.
\end{lemma}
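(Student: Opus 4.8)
The plan is to observe that the modified Newton step at level $k$ is nothing but a Lyapunov equation with a positive semi-definite right-hand side, and then to read off the stability of $A_{X_k}=A_k-G_kX_k$ from a PSD-solution-plus-detectability argument, with the detectability borrowed from Proposition~\ref{prop:detectable}. The first step is an algebraic identity for the constant term $C_k=P_k^{\top}\Pi(X_k)P_k+M_{X_k}$ used in Algorithm~\ref{alg:mNewton Lyapunov}. Writing $C_k=\bigl[I,\,-S_kR_k^{-1}\bigr]\bigl(\Pi(X_k)+\bigl[\begin{smallmatrix}Q&L\\ L^{\top}&R\end{smallmatrix}\bigr]\bigr)\bigl[I,\,-S_kR_k^{-1}\bigr]^{\top}$ and noting that $\Pi(X_k)+\bigl[\begin{smallmatrix}Q&L\\ L^{\top}&R\end{smallmatrix}\bigr]=\bigl[\begin{smallmatrix}Q_c(X_k)&L_c(X_k)\\ L_c(X_k)^{\top}&R_c(X_k)\end{smallmatrix}\bigr]$, a direct expansion using $S_k=X_kB+L_c(X_k)$ and $R_k=R_c(X_k)$ collapses all the $X_kBR_k^{-1}L_c(X_k)^{\top}$, $L_c(X_k)R_k^{-1}B^{\top}X_k$ and $L_c(X_k)R_k^{-1}L_c(X_k)^{\top}$ cross terms and leaves
\[
C_k=H_k+X_kG_kX_k .
\]
Since $X_k\ge 0$ by the standing (posterior) assumption and $R_k=R+\Pi_{22}(X_k)>0$, we have $G_k=BR_k^{-1}B^{\top}\ge 0$ and $X_kG_kX_k\ge 0$; together with $H_k\ge 0$ from Proposition~\ref{prop:detectable} (whose hypotheses are exactly those of this lemma) this gives $C_k\ge 0$.

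Next I would transfer detectability from the CARE pair to the Lyapunov pair. Proposition~\ref{prop:detectable} also yields that $(H_k,A_k)$ is detectable. Suppose $A_{X_k}y=\lambda y$ with $\mathrm{Re}(\lambda)\ge 0$ and $C_ky=0$. Because $C_k=H_k+X_kG_kX_k$ is a sum of PSD matrices, $y^{*}C_ky=0$ forces $y^{*}H_ky=0$ and $(X_ky)^{*}G_k(X_ky)=\|G_k^{1/2}X_ky\|^2=0$; hence $H_ky=0$ and $G_kX_ky=0$. Consequently $A_ky=(A_k-G_kX_k)y+G_kX_ky=\lambda y$, and detectability of $(H_k,A_k)$ forces $y=0$. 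Thus $(C_k,A_{X_k})$ is detectable.

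Finally I would close with the classical argument. Since the algorithm produced $X_{k+1}\ge 0$ solving $A_{X_k}^{\top}X_{k+1}+X_{k+1}A_{X_k}+C_k=0$ with $C_k\ge 0$ and $(C_k,A_{X_k})$ detectable, $A_{X_k}$ must be stable: if $A_{X_k}y=\lambda y$ with $y\neq 0$ and $\mathrm{Re}(\lambda)\ge 0$, then $2\,\mathrm{Re}(\lambda)\,y^{*}X_{k+1}y=-y^{*}C_ky$, where the left side is $\ge 0$ and the right side is $\le 0$, so $y^{*}C_ky=0$, whence $C_ky=0$, contradicting detectability. Hence every eigenvalue of $A_{X_k}=A_k-G_kX_k$ has negative real part, i.e. $A_k-G_kX_k$ is stable.

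The main obstacle is the bookkeeping in the first step: verifying carefully that the cross terms cancel to leave exactly $H_k+X_kG_kX_k$, and keeping track of $R_k>0$ so that $G_k$, $G_k^{1/2}$ and the inverses are well behaved. The remaining parts — the PSD decomposition $C_k\ge 0$, the detectability transfer, and the Lyapunov eigenvalue estimate — are routine once the identity is in hand, and rely only on Proposition~\ref{prop:detectable} (which itself needs the kernel inclusion $\mathrm{Ker}\,C\subseteq(\bigcap_i\mathrm{Ker}\,A_0^i)\cap\mathrm{Ker}\,L$ and detectability of $(C,A)$, both assumed here).
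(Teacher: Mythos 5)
Your proposal is correct and follows essentially the same route as the paper: you derive the identity $\Pi_{X_k}(X_k)+M_{X_k}=H_k+X_kG_kX_k$ (the paper's equation~\eqref{eq:Pi_k+M_k}), invoke Proposition~\ref{prop:detectable} for $H_k\ge 0$ and detectability of $(H_k,A_k)$, and conclude via the same Lyapunov eigenvalue contradiction using $X_{k+1}\ge 0$. Your intermediate step of isolating the detectability of $(C_k,A_{X_k})$ is only a cosmetic reorganization of the paper's direct contradiction argument.
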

\begin{proof}
From Proposition~\ref{prop:detectable}, $(H_k, A_k)$ is detectable. Let $H_k = C_k^{\top} C_k$.
From \eqref{eq:NT-AXk} and \eqref{eq:Pi_k+M_k}, we have
\begin{align*}
    (A_k - G_k X_k)^{\top} X_{k+1} + X_{k+1} (A_k - G_k X_k) = - (H_k + X_k G_k X_k).
\end{align*}
Assume that $A_k - G_k X_k$ is not stable. Then there exist $y \neq 0$ and $\mbox{Re}(\lambda) \geq 0$ such that $(A_k - G_k X_k) y = \lambda y$ which implies that
\begin{align*}
    0 \leq 2 \mbox{Re}(\lambda) y^{*} X_{k+1} y = - (y^{*} H_k y + y^{*}X_k G_k X_k y). 
\end{align*}
Since $H_k, G_k \geq 0$, we obtain the following $H_k y = 0$ and $G_k X_k y = 0$.
This means that
\begin{align*}
    \lambda y = (A_k - G_kX_k) y = A_k y, \quad y \neq 0, \quad \mbox{Re}(\lambda) \geq 0, \quad \mbox{ and } \quad H_k y = 0,
\end{align*}
which contradicts the detectability of $(H_k, A_k)$.
\end{proof}

From \eqref{eq:mtx_LRQ_c}, \eqref{eq:NT-PiX} and \eqref{eq:NT-MXk}, we have
\begin{align}
     \Pi_{X_k}(X_k) + M_{X_k}  
    &= \begin{bmatrix} 
        I & - S_{X_k} R_c(X_k)^{-1}
    \end{bmatrix} \begin{bmatrix}
        Q_c(X_k) & L_c(X_k) \\ L_c(X_k)^{\top} & R_c(X_k) 
    \end{bmatrix} \begin{bmatrix}
        I \\ - R_c(X_k)^{-1} S_{X_k}^{\top}
    \end{bmatrix} \nonumber \\
    &= \begin{bmatrix} 
        I & - L_c(X_k) R_c(X_k)^{-1}
    \end{bmatrix} \begin{bmatrix}
        Q_c(X_k) & L_c(X_k) \\ L_c(X_k)^{\top} & R_c(X_k) 
    \end{bmatrix} \begin{bmatrix}
        I \\ - R_c(X_k)^{-1} L_c(X_k)^{\top}
    \end{bmatrix} \nonumber \\
    &+ \begin{bmatrix} 
        I & - L_c(X_k) R_c(X_k)^{-1}
    \end{bmatrix} \begin{bmatrix}
        Q_c(X_k) & L_c(X_k) \\ L_c(X_k)^{\top} & R_c(X_k) 
    \end{bmatrix} \begin{bmatrix}
        0 \\ - R_c(X_k)^{-1} B^{\top} X_k
    \end{bmatrix} \nonumber \\
    &+ \begin{bmatrix} 
        0 & - X_k B R_c(X_k)^{-1}
    \end{bmatrix} \begin{bmatrix}
        Q_c(X_k) & L_c(X_k) \\ L_c(X_k)^{\top} & R_c(X_k) 
    \end{bmatrix} \begin{bmatrix}
        I \\ - R_c(X_k)^{-1} S_{X_k}^{\top}
    \end{bmatrix} \nonumber \\
    &= H_c(X_k) + X_k B R_c(X_k)^{-1} B^{\top} X_k. \label{eq:Pi_k+M_k}
\end{align}
Substituting \eqref{eq:NT-AXk} and \eqref{eq:Pi_k+M_k} into \eqref{eq:m_Newton_iter}, the modified Newton's iteration in \eqref{eq:m_Newton_iter} is equivalent to solve the following equation
\begin{align}
    A_k^{\top} X + X A_k - X G_k X + H_k + (X - X_k) G_k (X - X_k) = 0. \label{eq:equiv_mNT}
\end{align}

The matrix $H_c(X)$ in \eqref{eq:mtx_Hc} is PSD and satisfies $H_c(X)\ge H_c(Y)$ for $X\ge Y\ge 0$. Suppose that $\widehat{X}_-\ge 0$ in Theorem \ref{thm:monotonical_increasing} is the minimal PSD solution of SCARE \eqref{eq:Stoc-ARE-N}. From \eqref{eq:lim_H}, the Fr\`{e}chet derivative $H_{c}^{'}(\widehat{X}_-):\mathcal{H}^n\rightarrow \mathcal{H}^n$ of $H_c(X)$ at $\widehat{X}_-\ge 0$ is given by
\begin{align}\label{eq5.3}
 H_{c}^{'}(\widehat{X}_-)(Z)
 =&[I, -L_cR_c^{-1}]\Pi(Z)\left[\begin{array}{c}
      I \\
      -R_c^{-1}L_c^{\top}
 \end{array}\right],
\end{align}
where $Z\in \mathcal{H}^n$ is Hermitian, $R_c=R_c(\widehat{X}_-)$, and $L_c=L_c(\widehat{X}_-)$. Note that if $Z\ge 0$, then $H_{c}^{'}(\widehat{X}_-)(Z)\ge 0$.
In the following theorem, we show the monotonically nondecreasing property if the initial $X_0$ is close enough to the solution $\widehat{X}_-$. 
\begin{theorem}\label{thm5.4}
    Let $\widehat{X}_-\ge 0$ in Theorem \ref{thm:monotonical_increasing} be the minimal PSD solution of SCARE \eqref{eq:Stoc-ARE-N}. Assume that there exists $\alpha >0$ such that  the Fr\`{e}chet derivative of $H_c(X)$ at $X=\widehat{X}_-$ satisfies $H_{c}^{'}(\widehat{X}_-)(Z)\ge \alpha Z$ for all $Z \ge0$. Let $X_0\le \widehat{X}_-$ be close enough to the solution $\widehat{X}_-$ and $\mathcal{R}(X_0)\ge 0$. Suppose that $\{X_k\}_{k=0}^{\infty}$ with $X_k\ge 0$ for all $k$ is a sequence generated by mNT-FP-Lyap\_SDA algorithm (Algorithm~\ref{alg:mNewton Lyapunov}). Then 
 \begin{description}
     \item[(i)] $X_k\le X_{k+1}$, $X_k\le \widehat{X}_-$, $\mathcal{R}(X_k)\ge 0$ for each $k\ge 0$.
     \item[(ii)] $\lim_{k\rightarrow \infty}X_k=\widehat{X}_-$.
 \end{description}
\end{theorem}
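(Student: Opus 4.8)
\emph{Proof plan.} The plan is to prove (i) by induction on $k$, carrying along the assertions $0\le X_0\le X_1\le\dots\le X_k\le\widehat{X}_-$ together with $\mathcal{R}(X_j)\ge 0$ for all $j\le k$, and then to deduce (ii) from monotone boundedness and the minimality of $\widehat{X}_-$ recorded in Remark~\ref{rem3.1}; the base case $k=0$ is exactly the hypotheses on $X_0$. Throughout I abbreviate $A_k,G_k,H_k$ as in \eqref{eq:mtx_AGHR_k}, write $\mathcal{R}_k(Y)\equiv A_k^{\top}Y+YA_k-YG_kY+H_k$ for the frozen CARE residual, so $\mathcal{R}_k(X_k)=\mathcal{R}(X_k)$, and use the representation $\mathcal{R}_k(Y)=\begin{bmatrix}Y&-I\end{bmatrix}\Omega(X_k)\begin{bmatrix}Y\\-I\end{bmatrix}$ coming from \eqref{eq:mtx_Omega} and Remark~\ref{rem:equiv_form_SCARE}. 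By \eqref{eq:Pi_k+M_k}, $X_{k+1}$ is the stabilizing solution of $(A_k-G_kX_k)^{\top}X_{k+1}+X_{k+1}(A_k-G_kX_k)=-(H_k+X_kG_kX_k)$, equivalently of \eqref{eq:equiv_mNT}, so $\mathcal{R}_k(X_{k+1})=-(X_{k+1}-X_k)G_k(X_{k+1}-X_k)\le 0$.

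\emph{Monotonicity.} Subtracting $\mathcal{R}_k(X_k)=\mathcal{R}(X_k)$ from $\mathcal{R}_k(X_{k+1})=-(X_{k+1}-X_k)G_k(X_{k+1}-X_k)$ and expanding the $G_k$-quadratic terms (the cross term $(X_{k+1}-X_k)G_k(X_{k+1}-X_k)$ cancels) gives
\begin{align*}
(A_k-G_kX_k)^{\top}(X_{k+1}-X_k)+(X_{k+1}-X_k)(A_k-G_kX_k)=-\mathcal{R}(X_k).
\end{align*}
Since $X_{k+1}\ge 0$ by the standing posterior assumption, Lemma~\ref{lem:X_to_stable} (valid under the paper's standing detectability assumptions on $(C,A)$, cf.\ Proposition~\ref{prop:detectable}) shows $A_k-G_kX_k$ is stable; as $\mathcal{R}(X_k)\ge 0$, Lemma~\ref{lem3.2} yields $X_{k+1}\ge X_k$. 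This is the routine part of the step.

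\emph{The bounds $X_{k+1}\le\widehat{X}_-$ and $\mathcal{R}(X_{k+1})\ge 0$.} From Lemma~\ref{lem:increasing_Omega} and $X_k\le\widehat{X}_-$ (and $\mathcal{R}(\widehat{X}_-)=0$),
\begin{align*}
\mathcal{R}_k(\widehat{X}_-)=\begin{bmatrix}\widehat{X}_-&-I\end{bmatrix}\Omega(X_k)\begin{bmatrix}\widehat{X}_-\\-I\end{bmatrix}=-D_k,\qquad D_k\equiv\begin{bmatrix}\widehat{X}_-&-I\end{bmatrix}\bigl(\Omega(\widehat{X}_-)-\Omega(X_k)\bigr)\begin{bmatrix}\widehat{X}_-\\-I\end{bmatrix}\ge 0,
\end{align*}
and subtracting the Lyapunov equation for $X_{k+1}$ I obtain
\begin{align*}
(A_k-G_kX_k)^{\top}(\widehat{X}_--X_{k+1})+(\widehat{X}_--X_{k+1})(A_k-G_kX_k)=(\widehat{X}_--X_k)G_k(\widehat{X}_--X_k)-D_k.
\end{align*}
Thus $X_{k+1}\le\widehat{X}_-$ follows from Lemma~\ref{lem3.2} once $D_k\ge(\widehat{X}_--X_k)G_k(\widehat{X}_--X_k)$. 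This is where the coercivity assumption does its work: writing $\Omega(\widehat{X}_-)-\Omega(X_k)=\int_0^1\Omega'\bigl(X_k+t(\widehat{X}_--X_k)\bigr)(\widehat{X}_--X_k)\,dt$ and using the Fréchet formulas \eqref{eq:lim}, the integrand sandwiched by $\begin{bmatrix}\widehat{X}_-&-I\end{bmatrix}(\cdot)\begin{bmatrix}\widehat{X}_-\\-I\end{bmatrix}$ becomes a nonnegative congruence of $\Pi(\widehat{X}_--X_k)$; the hypothesis $H_c'(\widehat{X}_-)(Z)\ge\alpha Z$ for $Z\ge 0$, together with continuity of the data, is then used to bound $D_k$ below by $\alpha'(\widehat{X}_--X_k)$ for some $\alpha'>0$ whenever $X_k$ lies in a sufficiently small ball $\mathcal{B}$ around $\widehat{X}_-$. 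Because $0\le\widehat{X}_--X_k\le\widehat{X}_--X_0$ forces $\|\widehat{X}_--X_k\|_2\le\|\widehat{X}_--X_0\|_2$, while $(\widehat{X}_--X_k)G_k(\widehat{X}_--X_k)\le\|G_k\|\,\|\widehat{X}_--X_0\|_2\,(\widehat{X}_--X_k)$ with $\|G_k\|$ uniformly bounded on the order interval $[X_0,\widehat{X}_-]$, taking $X_0$ close enough to $\widehat{X}_-$ (so the whole orbit stays in $\mathcal{B}$) makes $D_k\ge(\widehat{X}_--X_k)G_k(\widehat{X}_--X_k)$, hence $X_{k+1}\le\widehat{X}_-$. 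The residual sign is obtained identically: $\mathcal{R}(X_{k+1})=\mathcal{R}_k(X_{k+1})+\begin{bmatrix}X_{k+1}&-I\end{bmatrix}\bigl(\Omega(X_{k+1})-\Omega(X_k)\bigr)\begin{bmatrix}X_{k+1}\\-I\end{bmatrix}$, where the second term is $\ge 0$ by Lemma~\ref{lem:increasing_Omega} (as $X_k\le X_{k+1}$), is bounded below by $\alpha'(X_{k+1}-X_k)$ on $\mathcal{B}$ by the same expansion, and therefore dominates the quadratic defect $-\mathcal{R}_k(X_{k+1})=(X_{k+1}-X_k)G_k(X_{k+1}-X_k)$ once $X_0$ is close enough; hence $\mathcal{R}(X_{k+1})\ge 0$ and the induction closes. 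I expect this uniform perturbation estimate --- that the coercivity of $H_c'(\widehat{X}_-)$ propagates, uniformly over a fixed small ball, to the nonnegative operators appearing in $D_k$ and in the residual correction, and there beats the $O(\|X_{k+1}-X_k\|)$-small Riccati defect --- to be the only genuine obstacle; everything else is bookkeeping of the induction.

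\emph{Passage to the limit.} By (i) the sequence $\{X_k\}$ is monotonically nondecreasing and bounded above by $\widehat{X}_-$, so $X_k\to\widehat{X}_\infty$ with $0\le\widehat{X}_\infty\le\widehat{X}_-$ and $X_{k+1}-X_k\to 0$. Letting $k\to\infty$ in $\mathcal{R}_k(X_{k+1})=-(X_{k+1}-X_k)G_k(X_{k+1}-X_k)$, the right side tends to $0$ and, by continuity of $A_c,G_c,H_c$ in $X$, the left side tends to $\mathcal{R}(\widehat{X}_\infty)$; hence $\widehat{X}_\infty$ is a PSD solution of SCARE \eqref{eq:Stoc-ARE-N}. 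By the minimality of $\widehat{X}_-$ (Remark~\ref{rem3.1}), $\widehat{X}_-\le\widehat{X}_\infty$, so $\widehat{X}_\infty=\widehat{X}_-$, which is (ii).
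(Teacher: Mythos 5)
Your proof follows the paper's argument essentially step for step: the same induction carrying $X_k\le X_{k+1}\le\widehat{X}_-$ and $\mathcal{R}(X_k)\ge 0$, the stability of $A_k-G_kX_k$ via Lemma~\ref{lem:X_to_stable}, Lemma~\ref{lem3.2} for each comparison, and the same key estimate (the paper's \eqref{eq5.4}) in which the coercivity $H_c'(\widehat{X}_-)(Z)\ge\alpha Z$ plus closeness of $X_0$ to $\widehat{X}_-$ is used to dominate the quadratic $G_k$ term. The only differences are presentational: the paper derives \eqref{eq5.4} from a first-order expansion of $H_c$ alone rather than your integral representation of $\Omega(\widehat{X}_-)-\Omega(X_k)$ (the uniform perturbation estimate you flag as the crux is handled there at the same level of rigor), and your passage to the limit in (ii) via minimality of $\widehat{X}_-$ is spelled out more explicitly than the paper's.
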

\begin{proof}
    Since $X_{k+1} \geq 0$, from Lemma~\ref{lem:X_to_stable}, $A_k - G_k X_k$ is stable for $k = 0, 1, \ldots$. We first show that 
    \begin{align}\label{eq5.4}
        \begin{bmatrix} Y&I\end{bmatrix}(\Omega(Y)-\Omega(X_{k}))\left[\begin{array}{c}
             Y  \\
            I 
        \end{array}\right]\ge (Y-X_k)G_k(Y-X_k),
    \end{align}
    for $X_0\le X_k\le Y\le \widehat{X}_-$. 
    From the first-order expansion, we have 
    \begin{align*}
        H_c(Y)-H_c(X_k)=H_{c}^{'}(X_k)(Y-X_k)+o(Y-X_k).
    \end{align*}
Using the fact that $H_{c}^{'}(\widehat{X}_-)(Z)\ge \alpha Z$ for $Z \ge 0$, $G_c(\widehat{X}_-)\ge 0$ is bounded and $X_0$ is sufficiently close to $\widehat{X}_-$, by continuity we have 
\begin{align*}
    H_c(Y)-H_c(X_k)\ge \frac{\alpha}{2}(Y-X_k) \text{ and }G_k=G_c(X_k)\text{ is bounded.}
\end{align*}
Hence,
\begin{align*}
    H_c(Y)-H_c(X_k)\ge (Y-X_k)G_k(Y-X_k).
\end{align*}
From Lemma \ref{lem:increasing_Omega}, we have $\Omega(Y)-\Omega(X_k)\ge 0$, and then
\begin{align*}
    \begin{bmatrix} Y&I\end{bmatrix}(\Omega(Y)-\Omega(X_{k}))\left[\begin{array}{c}
             Y  \\
            I 
        \end{array}\right]\ge \begin{bmatrix} Y&I\end{bmatrix}\left[\begin{array}{cc}
             0&0  \\
            0& H_c(Y)-H_c(X_k)
        \end{array}\right]\left[\begin{array}{c}
             Y  \\
            I 
        \end{array}\right]\ge(Y-X_k)G_k(Y-X_k).
\end{align*}
This shows that \eqref{eq5.4}.

Now, we prove assertion (i) by induction. For $k=0$, we already have $X_0\le \widehat{X}_-$ and $\mathcal{R}(X_0)\ge 0$. From \eqref{eq:Pi_k+M_k}, we have
\begin{align*}
      ( A_{0}^{\top} - X_{0} G_{0})(X_{1} - X_0) + ( X_{1} - X_0)( A_{0} - G_{0} X_{0}) =-\mathcal{R}(X_0)\le 0.
\end{align*} 
Since $A_0-G_0X_0$ is stable, by Lemma \ref{lem3.2}, we obtain that  $X_0\le X_1$. We now assume that assertion (i) is true for $k\ge 0$. From \eqref{eq:equiv_mNT}, we obtain that 
\begin{align*}
     & ( A_{k}^{\top} - X_{k} G_{k})(\widehat{X}_- - X_{k+1}) + ( \widehat{X}_- - X_{k+1})( A_{k} - G_{k} X_{k})  \nonumber\\
     &= \begin{bmatrix} \widehat{X}_- & -I \end{bmatrix} \Omega(X_{k}) \begin{bmatrix}
        \widehat{X}_-\\
     -I  \end{bmatrix}+(\widehat{X}_--X_{k})G_{k}(\widehat{X}_--X_{k}).
\end{align*} 
Using the fact that $\widehat{X}_-$ is a solution of SCARE, it follows from \eqref{eq5.4} that
\begin{align*}
    \begin{bmatrix} \widehat{X}_- & -I \end{bmatrix} \Omega(X_{k}) \begin{bmatrix}
        \widehat{X}_-\\
     -I  \end{bmatrix}+(\widehat{X}_--X_{k})G_{k}(\widehat{X}_--X_{k})\le 0.
\end{align*}
Therefore, $X_{k+1}\le \widehat{X}_-$ by Lemma \ref{lem3.2}.
Now, we show that $X_{k+1}\le X_{k+2}$ and $\mathcal{R}(X_{k+1})\ge 0$. Since 
\begin{align*}
    [X_{k+1},-I]\Omega(X_k)\begin{bmatrix}
        X_{k+1}\\
     -I  \end{bmatrix}+(X_{k+1} - X_k) G_k (X_{k+1} - X_k)= 0
\end{align*}
and $X_{k}\le X_{k+1}\le \widehat{X}_-$, we have
\begin{align*}
    \mathcal{R}(X_{k+1})&\equiv \begin{bmatrix} X_{k+1} & -I \end{bmatrix} \Omega(X_{k+1}) \begin{bmatrix}
        X_{k+1}\\
     -I  \end{bmatrix}\\&=
     [X_{k+1},-I](\Omega(X_{k+1})-\Omega(X_k))\begin{bmatrix}
        X_{k+1}\\
     -I  \end{bmatrix}- (X_{k+1} - X_k) G_k (X_{k+1} - X_k).
\end{align*}
From \eqref{eq5.4}, we have $\mathcal{R}(X_{k+1})\ge 0$. Then we obtain that 
\begin{align*}
     & ( A_{k+1}^{\top} - X_{k+1} G_{k+1})(X_{k+2} - X_{k+1}) + ( X_{k+2} - X_{k+1})( A_{k+1} - G_{k+1} X_{k+1})  \nonumber\\
     &=-\begin{bmatrix} X_{k+1} & -I \end{bmatrix} \Omega(X_{k+1}) \begin{bmatrix}
        X_{k+1}\\
     -I  \end{bmatrix}=-\mathcal{R}(X_{k+1})\le 0.
\end{align*}
Hence, $X_{k+1}\le X_{k+2}.$ The induction process is complete.

(ii). From the assertion (i), we have the sequence $\{X_k\}_{k=0}^{\infty}$ is monotonically nondecreasing and bounded above by $\widehat{X}_-$. This implies that $\lim_{k\rightarrow \infty}X_k=\widehat{X}_-$.   
\end{proof}

Next, we show the monotonically nonincreasing property and the convergence of $\{ X_k \}_{k=0}^{\infty}$ as follows.

\begin{theorem} \label{thm:monotonical_decreasing_mNT}
      Let $\widehat{X} \ge 0$ be the solution of SCARE \eqref{eq:Stoc-ARE-N}.  Let $X_0\ge \widehat{X}$ such that $A_0-G_0X_0$ is stable and $\mathcal{R}(X_0) \leq 0$.
Suppose that $\{ X_k \}_{k=0}^{\infty}$ with $X_k\geq 0$ for all $k$ is a sequence generated by mNT-FP-Lyap\_SDA algorithm (Algorithm~\ref{alg:mNewton Lyapunov}). 
 Then 
 \begin{description}
     \item[(i)] $X_k\ge X_{k+1}$, $X_k\ge \widehat{X}$, $\mathcal{R}(X_k)\le 0$ for each $k\ge 0$.
     \item[(ii)] $\lim_{k\rightarrow \infty}X_k=\widehat{X}_+$, where $\widehat{X}_+\ge \widehat{X}$ is a solution of SCARE \eqref{eq:Stoc-ARE-N}.
     \item[(iii)] $\sigma\left(A_c(\widehat{X}_+)-G_c(\widehat{X}_+)\widehat{X}_+\right)\subset \mathbb{C}_-\bigcup i\mathbb{R}$.
 \end{description} 
\end{theorem}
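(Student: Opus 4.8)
The plan is to follow closely the proof of Theorem~\ref{thm:monotonical_decreasing}, with the CARE‑solving step replaced by the modified‑Newton/Lyapunov step \eqref{eq:m_Newton_iter}, exploiting its equivalent form \eqref{eq:equiv_mNT}, the monotonicity of $\Omega$ in Lemma~\ref{lem:increasing_Omega}, the stability criterion in Lemma~\ref{lem:X_to_stable}, and the representation $\mathcal{R}(X)=\begin{bmatrix}X&-I\end{bmatrix}\Omega(X)\begin{bmatrix}X\\-I\end{bmatrix}$ from Remark~\ref{rem:equiv_form_SCARE}. All of (i) is proved by a single induction on $k$ with hypothesis ``$X_k\ge\widehat{X}$ and $\mathcal{R}(X_k)\le 0$''; the base case $k=0$ is exactly the standing assumption on $X_0$. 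Throughout, the posterior assumption $X_k\ge 0$ together with the detectability hypotheses underlying Lemma~\ref{lem:X_to_stable} (these are the standing hypotheses of Proposition~\ref{prop:detectable}) guarantee that $A_{X_k}=A_k-G_kX_k$ is stable at each step.

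For the inductive step I first record two Lyapunov‑type identities. Writing $A_{X_k}^{\top}W+WA_{X_k}$ for $W\in\{X_k,\widehat{X}\}$, combining with the fact that $X_{k+1}$ solves $A_{X_k}^{\top}X+XA_{X_k}=-(H_k+X_kG_kX_k)$ (which is \eqref{eq:m_Newton_iter} rewritten via \eqref{eq:Pi_k+M_k}), and using $\mathcal{R}(X_k)=\begin{bmatrix}X_k&-I\end{bmatrix}\Omega(X_k)\begin{bmatrix}X_k\\-I\end{bmatrix}$, a short computation gives
\begin{align*}
A_{X_k}^{\top}(X_{k+1}-X_k)+(X_{k+1}-X_k)A_{X_k}&=-\mathcal{R}(X_k),\\
A_{X_k}^{\top}(\widehat{X}-X_{k+1})+(\widehat{X}-X_{k+1})A_{X_k}&=\begin{bmatrix}\widehat{X}&-I\end{bmatrix}\Omega(X_k)\begin{bmatrix}\widehat{X}\\-I\end{bmatrix}+(\widehat{X}-X_k)G_k(\widehat{X}-X_k).
\end{align*}
Then I deduce, in this order: (a) $X_{k+1}\le X_k$, from the first identity, $\mathcal{R}(X_k)\le 0$ and Lemma~\ref{lem3.2}; (b) $X_{k+1}\ge\widehat{X}$, because $X_k\ge\widehat{X}\ge 0$ and Lemma~\ref{lem:increasing_Omega} give $\begin{bmatrix}\widehat{X}&-I\end{bmatrix}\Omega(X_k)\begin{bmatrix}\widehat{X}\\-I\end{bmatrix}\ge\begin{bmatrix}\widehat{X}&-I\end{bmatrix}\Omega(\widehat{X})\begin{bmatrix}\widehat{X}\\-I\end{bmatrix}=\mathcal{R}(\widehat{X})=0$, so the right‑hand side of the second identity is PSD and Lemma~\ref{lem3.2} applies; (c) $\mathcal{R}(X_{k+1})\le 0$, since \eqref{eq:equiv_mNT} at $X=X_{k+1}$ reads $\begin{bmatrix}X_{k+1}&-I\end{bmatrix}\Omega(X_k)\begin{bmatrix}X_{k+1}\\-I\end{bmatrix}=-(X_{k+1}-X_k)G_k(X_{k+1}-X_k)\le 0$, and from (a),(b) we have $X_k\ge X_{k+1}\ge\widehat{X}\ge 0$, so Lemma~\ref{lem:increasing_Omega} yields $\mathcal{R}(X_{k+1})=\begin{bmatrix}X_{k+1}&-I\end{bmatrix}\Omega(X_{k+1})\begin{bmatrix}X_{k+1}\\-I\end{bmatrix}\le\begin{bmatrix}X_{k+1}&-I\end{bmatrix}\Omega(X_k)\begin{bmatrix}X_{k+1}\\-I\end{bmatrix}\le 0$. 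This closes the induction and establishes (i).

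For (ii), the sequence is nonincreasing (by (i)) and bounded below by $\widehat{X}$, so $\widehat{X}_+:=\lim_kX_k\ge\widehat{X}$ exists; letting $k\to\infty$ in \eqref{eq:equiv_mNT} and using continuity of $A_c,G_c,H_c$ in $X$ and $X_{k+1}-X_k\to 0$ (which annihilates the correction term $(X_{k+1}-X_k)G_k(X_{k+1}-X_k)$) gives $\mathcal{R}(\widehat{X}_+)=0$, i.e.\ $\widehat{X}_+$ solves SCARE~\eqref{eq:Stoc-ARE-N}. For (iii), Lemma~\ref{lem:X_to_stable} makes every $A_k-G_kX_k$ stable (here $X_{k+1}\ge 0$), and continuity of the spectrum in the matrix entries forces $\sigma\!\left(A_c(\widehat{X}_+)-G_c(\widehat{X}_+)\widehat{X}_+\right)\subset\mathbb{C}_-\cup i\mathbb{R}$; these last two arguments are verbatim those used for Theorems~\ref{thm:monotonical_increasing}(iii) and \ref{thm:monotonical_decreasing}(iii).

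The main obstacle is step (c): one must control the \emph{true} Riccati residual $\mathcal{R}(X_{k+1})$, not merely the frozen quantity $\begin{bmatrix}X_{k+1}&-I\end{bmatrix}\Omega(X_k)\begin{bmatrix}X_{k+1}\\-I\end{bmatrix}$ that the iteration makes nonpositive. What saves the argument is that the modified‑Newton correction $(X_{k+1}-X_k)G_k(X_{k+1}-X_k)$ in \eqref{eq:equiv_mNT} is PSD and that, once the monotonicity $X_{k+1}\le X_k$ is in hand, Lemma~\ref{lem:increasing_Omega} lets one descend from $\Omega(X_k)$ to $\Omega(X_{k+1})$ without flipping the sign — so the ordering (a)$\to$(b)$\to$(c) of the deductions is essential. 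The other mild point is that, because $X_{k+1}$ solves a Lyapunov equation rather than a CARE, stability of $A_{X_k}$ cannot be read off from a ``stabilizing solution'' property as in Theorem~\ref{thm:monotonical_decreasing}; it has to be imported from Lemma~\ref{lem:X_to_stable}, which is precisely why the posterior assumption $X_k\ge 0$ for all $k$ (as in Guo's original result and in Theorem~\ref{thm5.4}) is required.
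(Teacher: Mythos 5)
Your proposal is correct and follows essentially the same route as the paper's proof: the same two Lyapunov identities derived from \eqref{eq:Pi_k+M_k} and \eqref{eq:equiv_mNT}, the same use of Lemma~\ref{lem:X_to_stable} for stability of $A_k-G_kX_k$, Lemma~\ref{lem:increasing_Omega} to pass from $\Omega(X_k)$ to $\Omega(\widehat{X})$ or $\Omega(X_{k+1})$, and Lemma~\ref{lem3.2} to extract the sign of the differences, with only a harmless reshuffling of which inequality is established at which index in the induction. The limit and spectral arguments for (ii) and (iii) also coincide with the paper's.
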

\begin{proof}
(i). Since $X_{k+1} \geq 0$, from Lemma~\ref{lem:X_to_stable}, $A_k - G_k X_k$ is stable  for $k = 0, 1, \ldots$.
Now, we prove by induction that for each $k\ge 0$,
\begin{align}\label{eq5.2}
    X_k \ge X_{k+1},\ \ X_k\ge \widehat{X} \text{ and }\mathcal{R}(X_k)\le 0.
\end{align}
For $k=0$, we already have $X_0\ge \widehat{X}$ and $\mathcal{R}(X_0)\le 0$. 
From \eqref{eq:Pi_k+M_k}, we have
\begin{align*}
      ( A_{0}^{\top} - X_{0} G_{0})(X_{1} - X_0) + ( X_{1} - X_0)( A_{0} - G_{0} X_{0}) =-\mathcal{R}(X_0)\ge 0.
\end{align*} 
Since $A_0 - G_0 X_0$ is stable, by Lemma \ref{lem3.2}, we obtain that  $X_0\ge X_1$. We now assume that \eqref{eq5.2} is true for $k\ge 0$. From \eqref{eq:equiv_mNT} and Lemma \ref{lem:increasing_Omega}, we obtain that 
\begin{align*}
     & ( A_{k}^{\top} - X_{k} G_{k})(\widehat{X} - X_{k+1}) + ( \widehat{X} - X_{k+1})( A_{k} - G_{k} X_{k})  
     = \begin{bmatrix} \widehat{X} & -I \end{bmatrix} \Omega(X_{k}) \begin{bmatrix}
        \widehat{X}\\
     -I  \end{bmatrix}+(\widehat{X}-X_{k})G_{k}(\widehat{X}-X_{k})\\
     &\ge \begin{bmatrix} \widehat{X} & -I \end{bmatrix} \Omega(\widehat{X}) \begin{bmatrix}
        \widehat{X}\\
     -I  \end{bmatrix}+(\widehat{X}-X_{k})G_{k}(\widehat{X}-X_{k}) 
     =(\widehat{X}-X_{k})G_{k}(\widehat{X}-X_{k})\ge 0.
\end{align*} 
Therefore, $X_{k+1}\ge \widehat{X}$ by Lemma \ref{lem3.2}.
Now, we show that $X_{k+1}\ge X_{k+2}$ and $\mathcal{R}(X_{k+1})\le 0$.   From Lemma \ref{lem:increasing_Omega}, $X_k\ge X_{k+1}$ and $X_{k+1}$ satisfies \eqref{eq:equiv_mNT}, we have
\begin{align*}
\mathcal{R}(X_{k+1})&\equiv A_{k+1}^{\top} X_{k+1} + X_{k+1} A_{k+1} - X_{k+1} G_{k+1} X_{k+1} + H_{k+1} 
=\begin{bmatrix} X_{k+1} & -I \end{bmatrix} \Omega(X_{k+1}) \begin{bmatrix}
        X_{k+1}\\
     -I  \end{bmatrix}\\&\le\begin{bmatrix} X_{k+1} & -I \end{bmatrix} \Omega(X_{k}) \begin{bmatrix}
        X_{k+1}\\
     -I  \end{bmatrix} 
     = - (X_{k+1} - X_{k}) G_{k} (X_{k+1} - X_k)\le 0.    
\end{align*}
Then we obtain that 
\begin{align*}
     & ( A_{k+1}^{\top} - X_{k+1} G_{k+1})(X_{k+2} - X_{k+1}) + ( X_{k+2} - X_{k+1})( A_{k+1} - G_{k+1} X_{k+1})  \nonumber\\
     &= \begin{bmatrix} X_{k+2} & -I \end{bmatrix} \Omega(X_{k+1}) \begin{bmatrix}
        X_{k+2}\\
     -I  \end{bmatrix}-\begin{bmatrix} X_{k+1} & -I \end{bmatrix} \Omega(X_{k+1}) \begin{bmatrix}
        X_{k+1}\\
     -I  \end{bmatrix}+(X_{k+2}-X_{k+1})G_{k+1}(X_{k+2}-X_{k+1})\\
     &=-\begin{bmatrix} X_{k+1} & -I \end{bmatrix} \Omega(X_{k+1}) \begin{bmatrix}
        X_{k+1}\\
     -I  \end{bmatrix}=-\mathcal{R}(X_{k+1})\ge 0.
\end{align*}
Hence, $X_{k+1}\ge X_{k+2}$. The induction process is complete. 

(ii). From the assertion (i), we have the sequence $\{X_k\}_{k=0}^{\infty}$ is monotonically nonincreasing and bounded below by $\widehat{X}$. This implies that $\lim_{k\rightarrow \infty}X_k=\widehat{X}_+$, where $\widehat{X}_+$ is a solution of SCARE \eqref{eq:Stoc-ARE-N}. (iii). Since $A_k-G_kX_{k}$ is stable for each $k$,  by continuity $\sigma\left(A_c(\widehat{X}_+)-G_c(\widehat{X}_+)\widehat{X}_+\right)\subset \mathbb{C}_-\bigcup i\mathbb{R}$.
\end{proof}

In practice, solving Lyapunov equation by Algorithm~\ref{alg:Lyapunov_SDA} outperforms solving CARE by SDA algorithm.
However, it is difficult to find an initial matrix $X_0$ satisfying the conditions in Theorem~\ref{thm5.4} or Theorem~\ref{thm:monotonical_decreasing_mNT}.  
Moreover, comparing the Lyapunov equation in \eqref{eq:m_Newton_iter} with the CARE in \eqref{eq:kth-SARE_CARE}, the CARE is more approximated the SCARE \eqref{eq:Stoc-ARE-N} than the Lyapunov equation. 
The convergence of FP-CARE\_SDA algorithm will be better than that of mNT-FP-Lyap\_SDA algorithm as shown in Figure~\ref{fig:spectial_X0_convergence}.

To preserve the advantage and tackle the drawbacks of mNT-FP-Lyap\_SDA algorithm, we proposed a practical algorithm that integrates FP-CARE\_SDA and mNT-FP-Lyap\_SDA algorithms. 
If we ignore the quadratic term $(X - X_k) G_k (X - X_k)$ in \eqref{eq:equiv_mNT}, then \eqref{eq:equiv_mNT} is equal the CARE in \eqref{eq:kth-SARE_CARE}. 
Assume that $X_k \geq 0$ in \eqref{eq:equiv_mNT} is closed to the nonnegative solution of the SCARE \eqref{eq:Stoc-ARE-N}. 
The norm of this  quadratic term will be small, which means that FP-CARE\_SDA and mNT-FP-Lyap\_SDA algorithms will have the similar convergence. Furthermore, from Theorem~\ref{thm:monotonical_increasing}, $\{X_k\}_{k=0}^{\infty}$ produced by FP-CARE\_SDA with $X_0 = 0$ is monotonically nondecreasing and $\mathcal{R}(X_k) \geq 0$ for all $k\geq0$.
Therefore, we can use  FP-CARE\_SDA method in Algorithm~\ref{alg:SDA-CARE} with $X_0 = 0$ to obtain $X_k$, which is close enough to $\widehat{X}_{-}$ and $\mathcal{R}(X_k) \geq 0$. Based on Theorem~\ref{thm5.4}, such $X_k$ can be used as an initial matrix of the mNT-FP-Lyap\_SDA algorithm to improve the convergence of the mNT-FP-Lyap\_SDA algorithm. We call this integrating method as the FPC-mNT-FP-Lyap\_SDA method and state in Algorithm~\ref{alg:CARE mNewton Lyapunov}.


\begin{algorithm}
\caption{FPC-mNT-FP-Lyap\_SDA  method for solving SCARE \eqref{eq:Stoc-ARE-N} } \label{alg:CARE mNewton Lyapunov}
\begin{algorithmic}[1]
\REQUIRE $A, Q \in \mathbb{R}^{n \times n}$, $B, L  \in \mathbb{R}^{n \times m}$, $R = R^{\top} \in \mathbb{R}^{m \times m}$, $A_0^i \in \mathbb{R}^{n \times n}$, $B_0^i \in \mathbb{R}^{n \times m}$ for $i = 1, \ldots, r$, and  a tolerance $\varepsilon$.
\ENSURE Solution $X$.
\STATE Set $k = 0$ and $\delta = \infty$.
\STATE Choose an initial $X_0^{\top} = X_0$.
\STATE Use FP-CARE\_SDA algorithm (Algorithm~\ref{alg:SDA-CARE}) with initial $X_0$ to compute an approximated $X_k$ satisfied $\| X_k - X_{k-1} \| < 0.01$.
\STATE Use mNT-FP-Lyap\_SDA algorithm (Algorithm~\ref{alg:mNewton Lyapunov}) with initial matrix $X_k$ to solve SCARE \eqref{eq:Stoc-ARE-N}.
\end{algorithmic}
\end{algorithm}

\section{Numerical experiments} \label{sec:numerical}
In this section, we construct eight examples of SCAREs, in which $A$, $B$, $Q$, $R$ come from the references and $A_0^1, \ldots, A_0^r$, $B_0^1, \ldots, B_0^r$ are constructed in this paper. 
In the first four examples, we find an initial $X_0$ satisfying the conditions in Theorems~\ref{thm:monotonical_decreasing} and \ref{thm:monotonical_decreasing_mNT} to numerically validate 
$\{ X_k \}_{k=0}^{\infty}$ produced by FP-CARE\_SDA and mNT-FP-Lyap\_SDA algorithms to be monotonically nonincreasing and compare the convergence of the FP-CARE\_SDA, NT-FP-Lyap\_SDA and mNT-FP-Lyap\_SDA algorithms. 
Such initial $X_0$ is difficult to find in Examples~\ref{example:6} - \ref{example:Quadrotor}, then  we use $X_0 = 0$ as the initial matrix. Theorem~\ref{thm:monotonical_increasing} shows $\{ X_k \}_{k=0}^{\infty}$ to be monotonically nondecreasing for the FP-CARE\_SDA algorithm. Numerical results show the failure of the convergence for the NT-FP-Lyap\_SDA and mNT-FP-Lyap\_SDA algorithms. However, the FPC-NT-FP-Lyap\_SDA and FPC-mNT-FP-Lyap\_SDA algorithms converge well to demonstrate the robustness of FP-CARE\_SDA algorithm.

We use the normalized residual  
\begin{align*}
    \mbox{NRes}_k \equiv \frac{\| \mathcal{R}(X_k) \|_F}{2\| A X_k \|_F + \| Q \|_F + \| \Pi_{11}(X_k) \|_F + \| \mathcal{B}(X_k) \|_F} 
\end{align*}
to measure the precision of the calculated solutions $X_k$,
where $\mathcal{B}(X) = (XB+L+\Pi_{12}(X))(R+\Pi_{22}(X))^{-1}(XB+L+\Pi_{12}(X))^{\top}$.
In Subsections~\ref{subsec:validation} and \ref{subsec:robustness}, we demonstrate the convergence behaviors of $\mbox{NRes}_k$ for FP-CARE\_SDA (Algorithm~\ref{alg:SDA-CARE}), NT-FP-Lyap\_SDA (Algorithm~\ref{alg:Newton fixed-point Lyapunov}), mNT-FP-Lyap\_SDA (Algorithm~\ref{alg:mNewton Lyapunov}), FPC-NT-FP-Lyap\_SDA (Algorithm~\ref{alg:CARE Newton fixed-point Lyapunov}), FPC-mNT-FP-Lyap\_SDA (Algorithm~\ref{alg:CARE mNewton Lyapunov}), and fixed-point iteration in \eqref{eq:FP-SCARE} proposed by Guo and Liang \cite{guli:2023}.  The efficiency of these algorithms is demonstrated in Subsection~\ref{subsec:efficiency}.

All computations in this section are performed in MATLAB 2023a with an Apple M2 Pro CPU.

\subsection{Numerical Validation of the convergence} \label{subsec:validation}
\begin{example} \label{example:1}
Let
\begin{align*}
     A &= \begin{bmatrix}
         0.9512 & 0 \\  0 & 0.9048
     \end{bmatrix}, \quad B = \begin{bmatrix}
         4.8770& 4.8770 \\ -1.1895 & 3.5690
     \end{bmatrix}, \quad Q = \begin{bmatrix}
         0.005 & 0 \\ 0 & 0.020
     \end{bmatrix}, \quad R = \begin{bmatrix}
         \frac{1}{3} & 0 \\  0 & 3
     \end{bmatrix}, \quad L = 0,\\
     A_0^1 &= \begin{bmatrix}
         -0.1 & 0.1 \\  -0.2 & 0.2
     \end{bmatrix}, \quad A_0^2 = \begin{bmatrix}
         1 &  -0.1 \\ 0.5 & 0
     \end{bmatrix}, \quad A_0^3 = \begin{bmatrix}
         0 & -0.2 \\ 0.2 & 0.5
     \end{bmatrix}, \\ 
     B_0^1 &= \begin{bmatrix}
         0 & -0.1 \\  0.1 & 0
     \end{bmatrix}, \quad B_0^2 = \begin{bmatrix}
         0.5 & 1 \\ -0.1 & 0.2
     \end{bmatrix}, \quad B_0^3 = \begin{bmatrix}
         1 & -1 \\ -0.2 & 1
     \end{bmatrix},
\end{align*}
where $A$, $B$, $Q$ and $R$ are given in Example 2.2 of \cite{abbe:1999b}.
\end{example}

\begin{example} \label{example:12}
Let
\begin{align*}
    A &= \varepsilon \begin{bmatrix}
         \frac{7}{3} & \frac{2}{3} & 0 \\
         \frac{2}{3} & 2 & - \frac{2}{3} \\
         0 & -\frac{2}{3} & \frac{5}{3}
    \end{bmatrix}, \quad B = \frac{1}{\sqrt{\varepsilon}} I_3, \quad A_0^1 =   0.1  \begin{bmatrix}
         0.1 & -0.1 & 0.01 \\  -0.2 & 0.1 & -0.1 \\  0.05 & -0.01 & 0.3
    \end{bmatrix},  \quad R = I_3, \quad L = 0,\\ 
    Q &= \begin{bmatrix}
         (4\varepsilon+4+\varepsilon^{-1})/9 &  2(2\varepsilon-1-\varepsilon^{-1})/9 & 2(2-\varepsilon-\varepsilon^{-1})/9 \\
         2(2\varepsilon-1-\varepsilon^{-1})/9 & (1+4\varepsilon+4/\varepsilon)/9  &    2(-1-\varepsilon+2/\varepsilon)/9 \\
                      2(2-\varepsilon-\varepsilon^{-1})/9 &  2(-1-\varepsilon+2/\varepsilon)/9  &   (4+\varepsilon+4/\varepsilon)/9
    \end{bmatrix}, \quad B_0^1 = 0.1 \begin{bmatrix}
         0 & 0 & 0.2 \\ 0.36 &  -0.6 & 0 \\  0 & -0.95 & -0.032
    \end{bmatrix}
\end{align*}
with $\varepsilon = 0.01$, where $A$, $B$, $Q$ and $R$ are given in Example 12 of \cite{chfl:2005}.
\end{example}

\begin{example} \label{example:Weng}
    Let
    \begin{align*}
        A &= \begin{bmatrix} 0.9512 & 0 \\  0 & 0.9048 \end{bmatrix}, \quad
        B = \begin{bmatrix} 4.8770 & 4.8770 \\ -1.1895& 3.5690 \end{bmatrix}, \quad Q = \begin{bmatrix} 0.0028 & -0.0013 \\ -0.0013 & 0.0190 \end{bmatrix}, \\ 
        R &= \begin{bmatrix} 1/3 & 0 \\ 0 & 3\end{bmatrix}, \quad
        A_0^1  = 6.5 \begin{bmatrix} 0.1 & 0.2 \\ 0.2 & 0.1 \end{bmatrix}, \quad B_0^1 = 6.5 I_2,\quad L = 0,
    \end{align*}
    modified in \cite{weph:2021}.
\end{example}

\begin{example} \label{example:rho_NT_GT_1}
    Let
    \begin{align*}
        A &= \begin{bmatrix} 3- \varepsilon & 1 \\ 4 & 2-\varepsilon \end{bmatrix}, \quad B = \begin{bmatrix} 1 \\ 1 \end{bmatrix}, \quad Q = \begin{bmatrix} 4\varepsilon-11 & 2\varepsilon-5 \\ 2\varepsilon-5 & 2\varepsilon-2 \end{bmatrix}, \quad A_0^1  = \begin{bmatrix} 0.1 & -0.1 \\ -0.2 & 0.1 \end{bmatrix}, \\
        B_0^1  &=  \begin{bmatrix} 0.1 \\ 0 \end{bmatrix}, \quad L  =  \begin{bmatrix} 0 \\ 0 \end{bmatrix}, \quad R = 1,\ \mbox{ with } \ \varepsilon = 5,
    \end{align*}
    where $A$, $B$, $Q$ and $R$ are given Example 11 of \cite{chfl:2005}.
\end{example}

\begin{figure}
\center
\begin{subfigure}[b]{0.42\textwidth}
\center
\includegraphics[width=\textwidth]{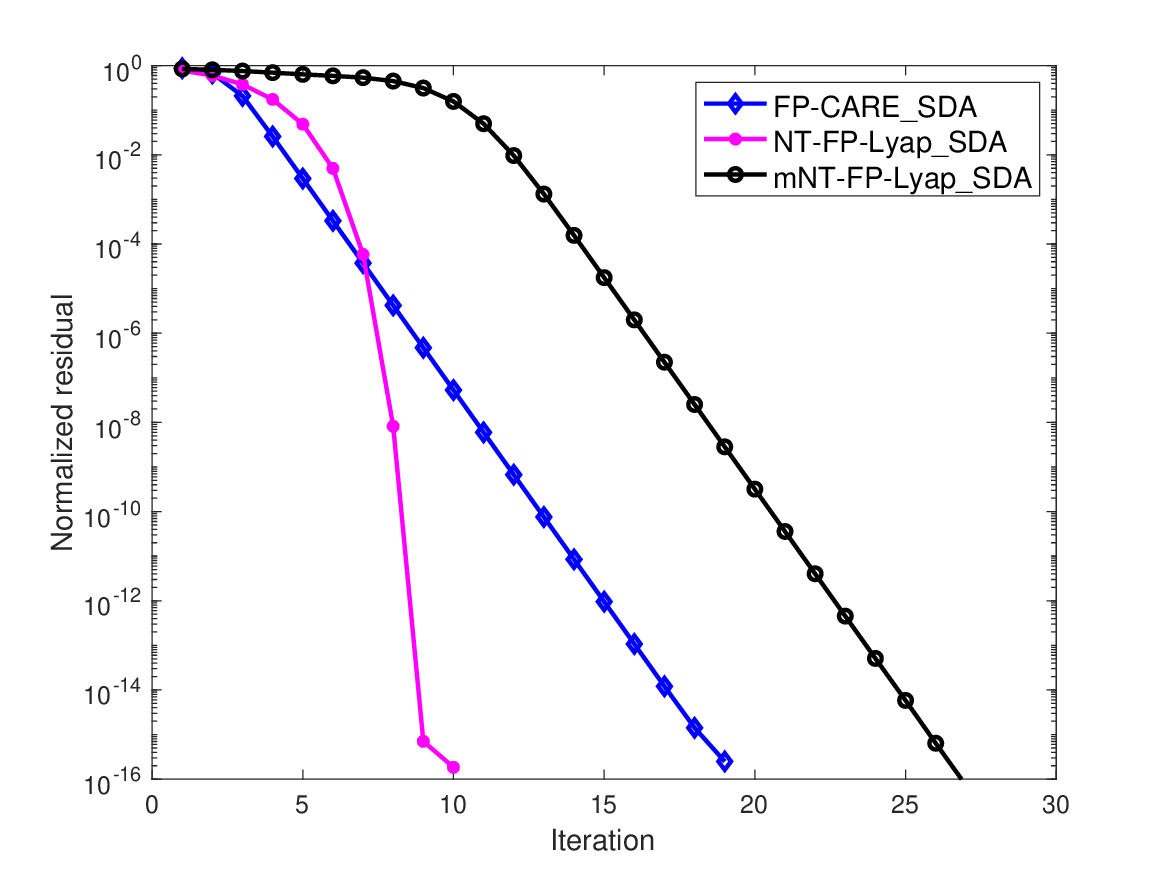}
\caption{Example~\ref{example:1}}
\label{fig:example1_convergence}
\end{subfigure}
\begin{subfigure}[b]{0.42\textwidth}
\center
\includegraphics[width=\textwidth]{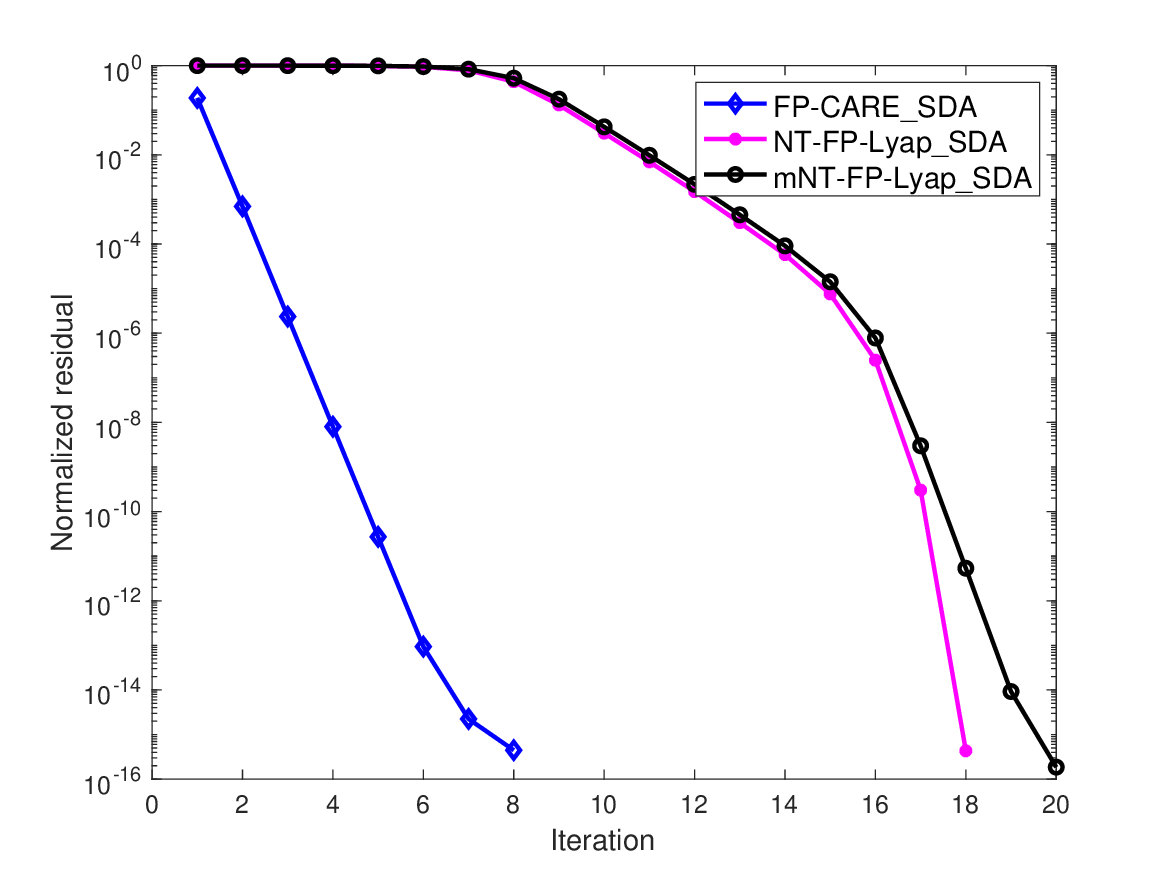}
\caption{Example~\ref{example:12} with $\varepsilon = 0.01$}
\label{fig:example12_convergence}
\end{subfigure}
\begin{subfigure}[b]{0.42\textwidth}
\center
\includegraphics[width=\textwidth]{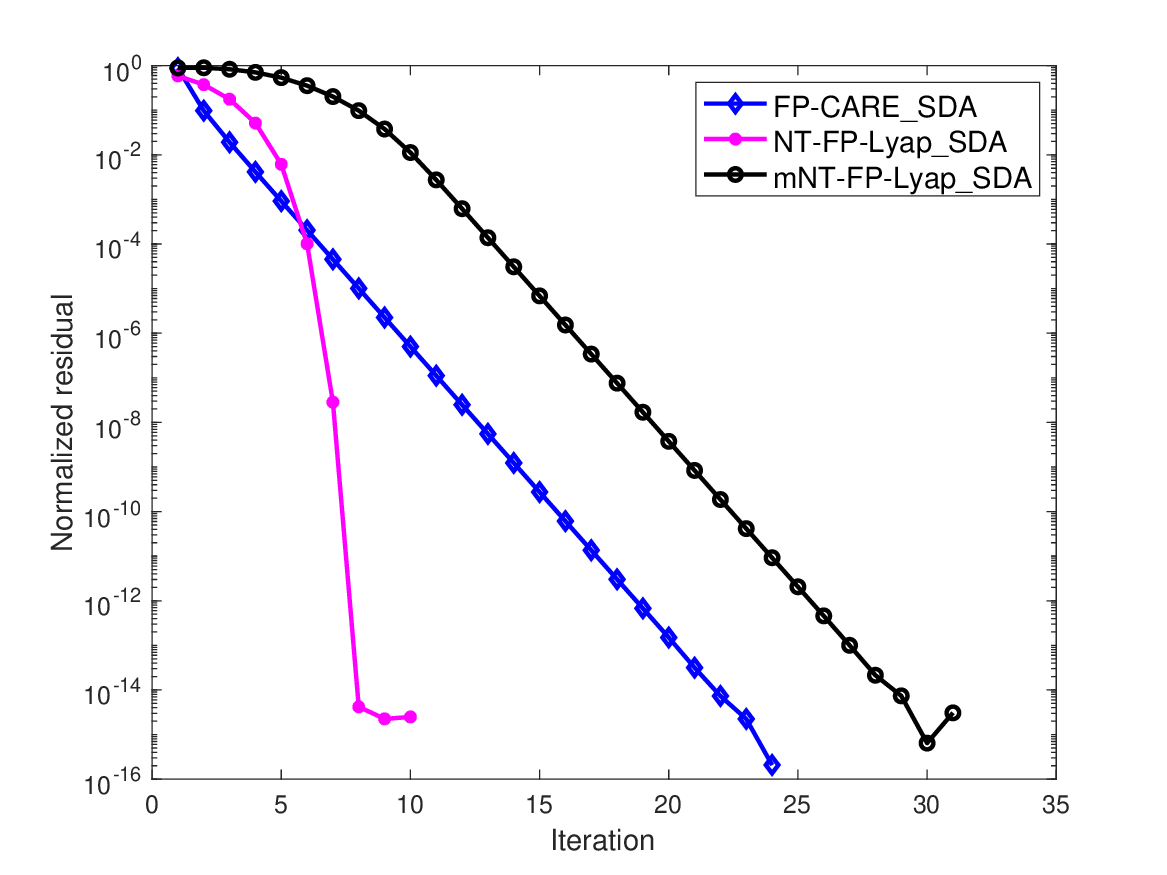}
\caption{Example~\ref{example:Weng}}
\label{fig:example_Weng}
\end{subfigure}
\begin{subfigure}[b]{0.42\textwidth}
\center
\includegraphics[width=\textwidth]{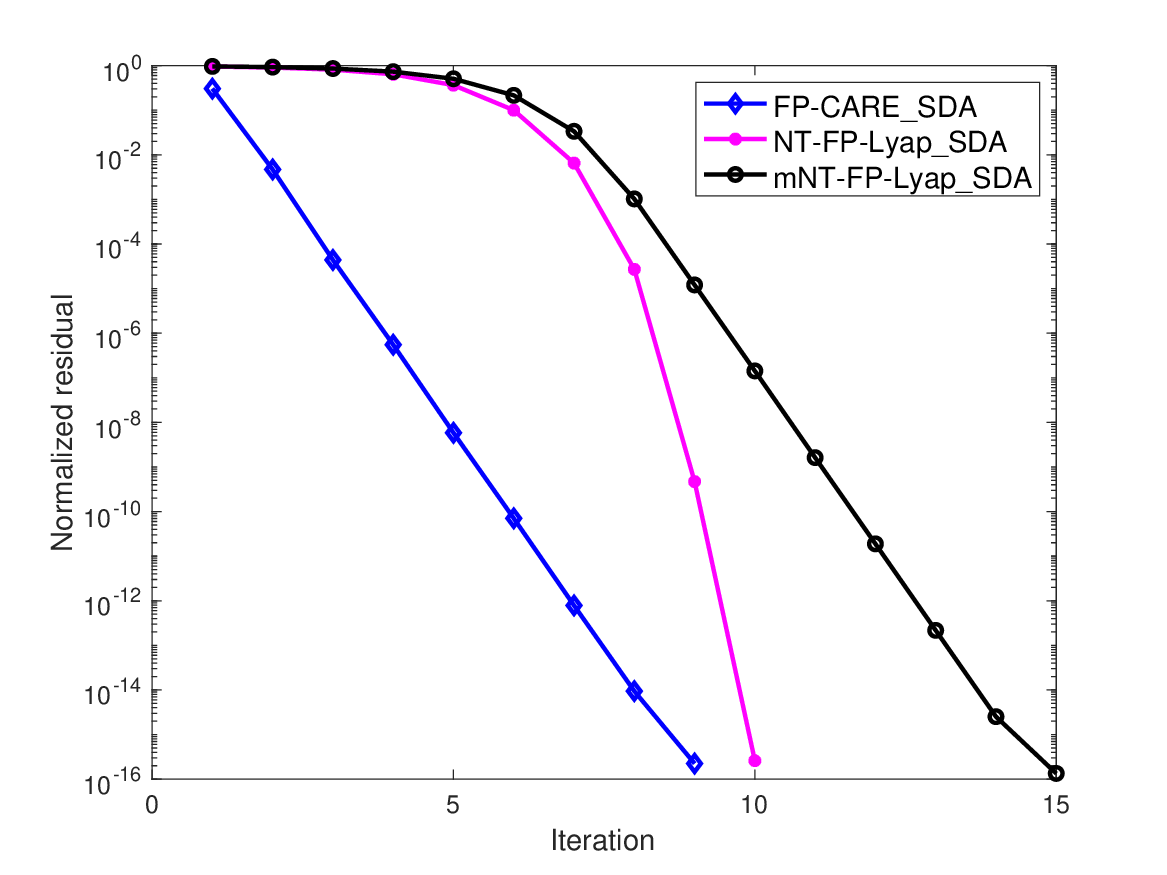}
\caption{Example~\ref{example:rho_NT_GT_1} with $\varepsilon = 5$}
\label{fig:rho_NT_GT_1}
\end{subfigure}
\caption{
 $\mbox{NRes}_k$ for FP-CARE\_SDA, FPC-NT-FP-Lyap\_SDA, FPC-mNT-FP-Lyap\_SDA and fixed-point method.}
\label{fig:spectial_X0_convergence}
\end{figure}

In Examples \ref{example:1} - \ref{example:rho_NT_GT_1}, we firstly take $X_0 = 0$ as the initial matrix for FP-CARE\_SDA, NT-FP-Lyap\_SDA and mNT-FP-Lyap\_SDA algorithms. FP-CARE\_SDA has a monotonically nondecreasing convergence as shown in Theorem~\ref{thm:monotonical_increasing}. The numerical result shows that (i) the produced sequence converges to the solution $\widehat{X}_{-} \geq 0$ of the SCARE; (ii) NT-FP-Lyap\_SDA and mNT-FP-Lyap\_SDA algorithms fail to converge to $\widehat{X}_{-}$. 

Next, we choose an initial $X_0$ with $\mathcal{R}(X_0) \leq 0$, $X_0 \geq \widehat{X}$, $A_0 - G_0 X_0$ being stable and $\rho(\mathcal{L}_{A_{X_0}}^{-1} \Pi_{X_0}) < 1$ so that $\{ X_k \}_{k=0}^{\infty}$ produced by the FP-CARE\_SDA, NT-FP-Lyap SDA and mNT-FP-Lyap SDA algorithms are monotonically nonincreasing and converge to the solution $\widehat{X}_{+}$ of the SCARE.  Using the same initial $X_0$ for FP-CARE\_SDA, NT-FP-Lyap\_SDA and mNT-FP-Lyap\_SDA algorithms, the normalized residual $\mbox{NRes}_k$  of the algorithms are shown in Figure~\ref{fig:spectial_X0_convergence}. Numerical results show that $\widehat{X}_{+} = \widehat{X}{-}$ for these four examples.

%
%

The numerical results in Figure~\ref{fig:spectial_X0_convergence} show that the convergence of the FP-CARE\_SDA algorithm is obviously better than that of the mNT-FP-Lyap\_SDA algorithm. 
Furthermore, the normalized residuals at the first few iterations of the FP-CARE\_SDA algorithm are less than those of the NT-FP-Lyap\_SDA and mNT-FP-Lyap\_SDA algorithms. 
These indicate that we can use a few iterations of FP-CARE\_SDA algorithm to produce a good initial matrix $X_0$ for the NT-FP-Lyap\_SDA and mNT-FP-Lyap\_SDA algorithms, which are, called FPC-NT-FP-Lyap\_SDA and FPC-mNT-FP-Lyap\_SDA, respectively, used to improve the convergence and efficiency of the algorithms. 

\subsection{Robustness of FP-CARE\_SDA algorithm} \label{subsec:robustness}

Due to the difficulty in selecting the initial matrix for Theorems~\ref{thm:monotonical_decreasing} and \ref{thm:monotonical_decreasing_mNT}, based on Theorem~\ref{thm:monotonical_increasing}, we take $X_0 = 0$ as the initial matrix so that the FP-CARE\_SDA algorithm has monotonically nondecreasing convergence. Numerical results show that the NT-FP-Lyap\_SDA and mNT-FP-Lyap\_SDA algorithms with $X_0 = 0$ fail to converge. In the following, we introduce four typical practical examples and discuss the convergence of the FPC-NT-FP-Lyap\_SDA and FPC-mNT-FP-Lyap\_SDA algorithms for these examples to demonstrate the robustness of FP-CARE\_SDA algorithm.

\begin{example} \label{example:6}
The coefficient matrices, $A$, $B$, $Q$, $R$, and $L$ of a mathematical model of position and velocity controls for a string of high-speed vehicles \cite{abbe:1999a} can be described as, 
\begin{align*}
     A &= \begin{bmatrix}
          C & D \\ & \ddots & \ddots \\
          & & C & D \\
          & & & C & -1 \\
          & & & & -1
     \end{bmatrix} \in \mathbb{R}^{(2m-1) \times (2m-1)}, \quad C = \begin{bmatrix}
         -1 & 0 \\ 1 & 0
     \end{bmatrix}, \quad D = \begin{bmatrix}
         0 & 0 \\ -1 & 0
     \end{bmatrix}, \\
     B &= [b_{ij}] \in \mathbb{R}^{(2m-1) \times m}, \quad b_{ij} = \begin{cases} 
        1,  & \mbox{ for } i = 1, 3, 5, \ldots, 2m-1, j = (i+1)/2,\\
        0, & \mbox{ otherwise},
        \end{cases} \\
     Q &= [q_{ij} ]\in \mathbb{R}^{(2m-1) \times (2m-1)}, \quad q_{ij} = \begin{cases}
     10, & \mbox{ for } i = 2, 4, 6, \ldots, 2m-1, \ j = i, \\
     0, & \mbox{ otherwise},
     \end{cases} \\
     R &= I_m, \quad L = 0_{(2m-1)\times m},  \\ 
\end{align*}
where $m$ is the number of vehicles.
Furthermore, the multiplicative white noises are 
\begin{align*}
    A_0^i &= 0.1 \times i \times \frac{\| A \|_{\infty}}{\| \widehat{A}_0^i \|_{\infty}} \widehat{A}_0^i \text{ with }\widehat{A}_0^i = \mbox{\texttt{wgn}}(2m-1,2m-1,8\times i),   \mbox{ for } i = 1, \ldots, 5, \\
     B_0^i &= 0.15 \times i \times \frac{\| B \|_{\infty}}{\| \widehat{B}_0^i \|_{\infty}} \widehat{B}_0^i\text{ with }\widehat{B}_0^i = \mbox{\texttt{wgn}}(2m-1,m,3\times i),   \mbox{ for } i = 1, \ldots, 5, 
\end{align*}
where \texttt{wgn} is a MATLAB built-in function used to generate white Gaussian noise.  
\end{example}

\begin{example} \label{example:3}
The differential SDRE with impact angle guidance strategies models the 3D missile/target interception engagement  \cite{lwhxwl:2023,nakm:2021a}. The nonlinear dynamics can be expressed as in \eqref{eq:dyna_SSDC}, where the system, the control and the white noise matrix are given by 
\begin{subequations} \label{eq6.1}
\begin{align}
    A(x)&=
    \begin{bmatrix}
    0 & 1 &0 & 0 & 0
    \\ 0 &\frac{-2\dot{r}}{r} & 0 & \frac{-1}{2}\dot{\psi}\sin(2x_{1}+2\theta_{f}) & \frac{g_{1}}{z_{a}}
    \\ 0 & 0 & 0 & 1 & 0
    \\0 &2\dot{\psi}\tan(x_{1}+\theta_{f}) &0 &\frac{-2\dot{r}}{r} &\frac{g_{2}}{z_{a}}
    \\0 &0 &0 &0 &-\eta
    \end{bmatrix}\ \
    B(x)=
    \begin{bmatrix}
    0&0\\
    \frac{-\cos\theta_{M}}{r} &0 \\
    0&0\\
    \frac{\sin\theta_{M}\sin\psi_{M}}{r\cos\theta} &\frac{-\cos\psi_{M}}{r\cos\theta}\\
        0 &0 
    \end{bmatrix},\nonumber\\
     A_0^i &= 0.2 \times i \times  \frac{\| A \|_{\infty}}{\| \widehat{A}_0^i \|_{\infty}} \widehat{A}_0^i \ \text{ with }\ \widehat{A}_0^i = \mbox{\texttt{wgn}}(5,5,8\times i), \mbox{ for } i = 1, \ldots, 4, \label{eq6.1a}\\
     B_0^i &= 0.1 \times i \times  \frac{\| B \|_{\infty}}{\| \widehat{B}_0^i \|_{\infty}} \widehat{B}_0^i \ \text{ with } \ \widehat{B}_0^i = \mbox{\texttt{wgn}}(5,2,3\times i), \mbox{ for } i = 1, \ldots, 4, \label{eq6.1b}
\end{align}
\end{subequations}
in which $r$ measures the distance between missile and target, $\{ \psi, \theta\}$ (respective to $\{ \psi_{M}, \theta_M\}$) are the azimuth and elevation angles (respective to missile) corresponding to the initial frame (respective the line-of-sight (LOS) frame). 
Furthermore, the state vector $x = [x_1, x_2, x_3, x_4, x_5]^{\top}$ is defined by $x_1 = \theta - \theta_f$, $x_2 = \dot{x}_1$, $x_3 = \psi - \psi_f$, $x_4 = \dot{x}_3$ with $\theta_f$ and $\psi_f$ being prescribed final angles, $x_5$ is a slow varying stable auxiliary variable governed by $\dot{x}_5 = -\eta x_5$, $\eta > 0$, $g_1$ and $g_2$ are highly nonlinear functions on $r$, $\theta$, $\psi$, $\theta_f$, $\psi_f$, $\theta_T$, $\psi_T$, $a_T^z$ and $a_T^y$ (see \cite{nakm:2021a} for details), where $\{\psi_T, \theta_T\}$ are the azimuth and elevation angles of the target to the LOS frame, $a_T^z$ and $a_T^y$ are the lateral accelerations for the target, and $(u_1, u_2) = (a_M^z, a_M^y)$ is the control vector for the maneuverability of missile. 

For a fixed state $x$ at the state-dependent technique of SDRE \cite{lwhxwl:2023}, the coefficient matrices of the SCARE \eqref{eq:SCARE_gen} associated with the optimization problem \eqref{eq:cost_fun} under the nonlinear dynamics for the missile/target engagement have the forms.
 
\begin{align*}
     A &= \begin{bmatrix}
          0 & 1          & 0 & 0           & 0 \\  
          0 & 0.0696 & 0 & -0.0307 & -1.91 \times 10^{-4} \\ 
          0 & 0          & 0 & 1           & 0 \\ 
          0 & 0.123   & 0 & 0.0696 & 6.13 \times 10^{-4} \\
          0 & 0         & 0 & 0           & -0.1
     \end{bmatrix}, \quad B = \begin{bmatrix}
          0 & 0 \\ -9.13 \times 10^{-5} & 0 \\ 0 & 0 \\ 2.42 \times 10^{-5} & -1.30 \times 10^{-4} \\ 0 & 0
     \end{bmatrix}, \\
     Q &= {\rm diag}(1000, 1000, 1000, 1000, 0),  \quad R = I_2, \quad L = 0, \quad A_0^i \text{ and }B_0^i\text{ are as in \eqref{eq6.1}.}
\end{align*}
\end{example}


\begin{figure}
\center

\begin{subfigure}[b]{0.42\textwidth}
\center
\includegraphics[width=\textwidth]{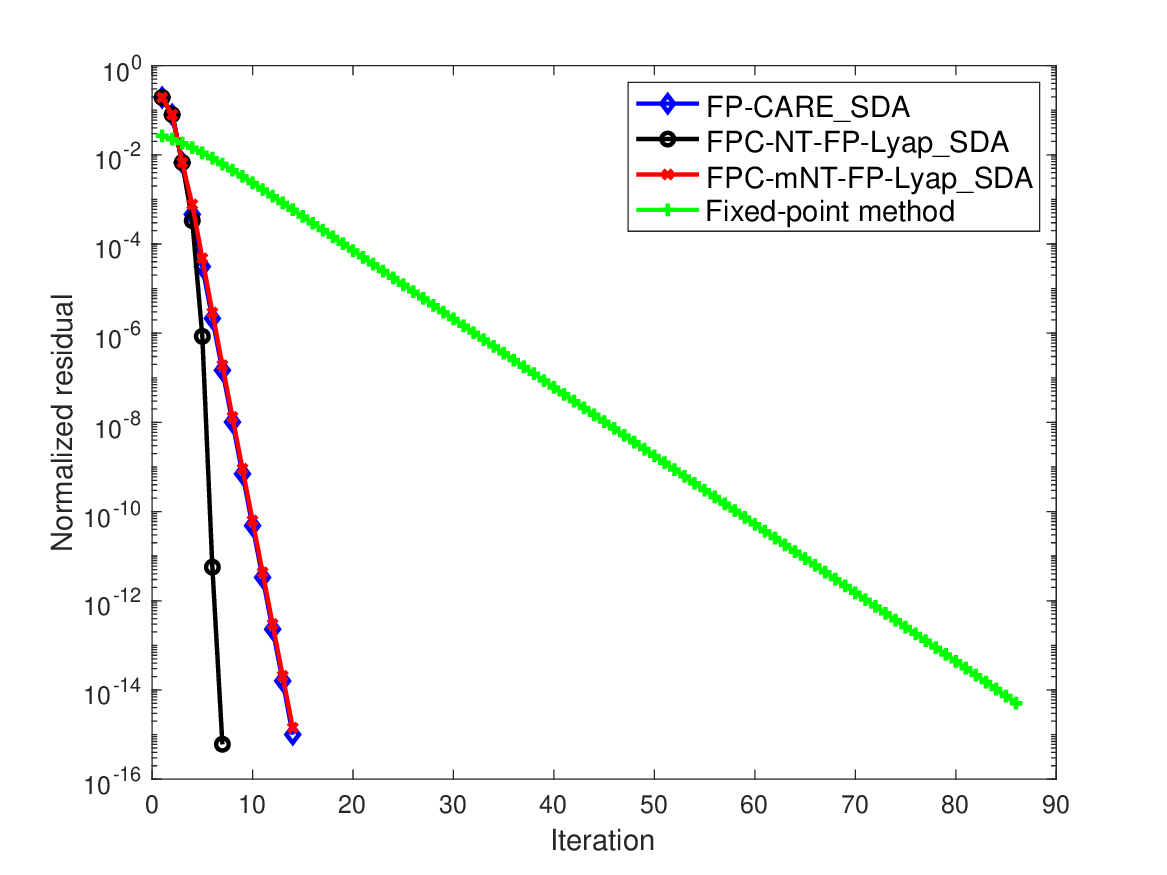}
\caption{Example~\ref{example:6} with $m = 100$}
\label{fig:example6_convergence}
\end{subfigure}
\begin{subfigure}[b]{0.42\textwidth}
\center
\includegraphics[width=\textwidth]{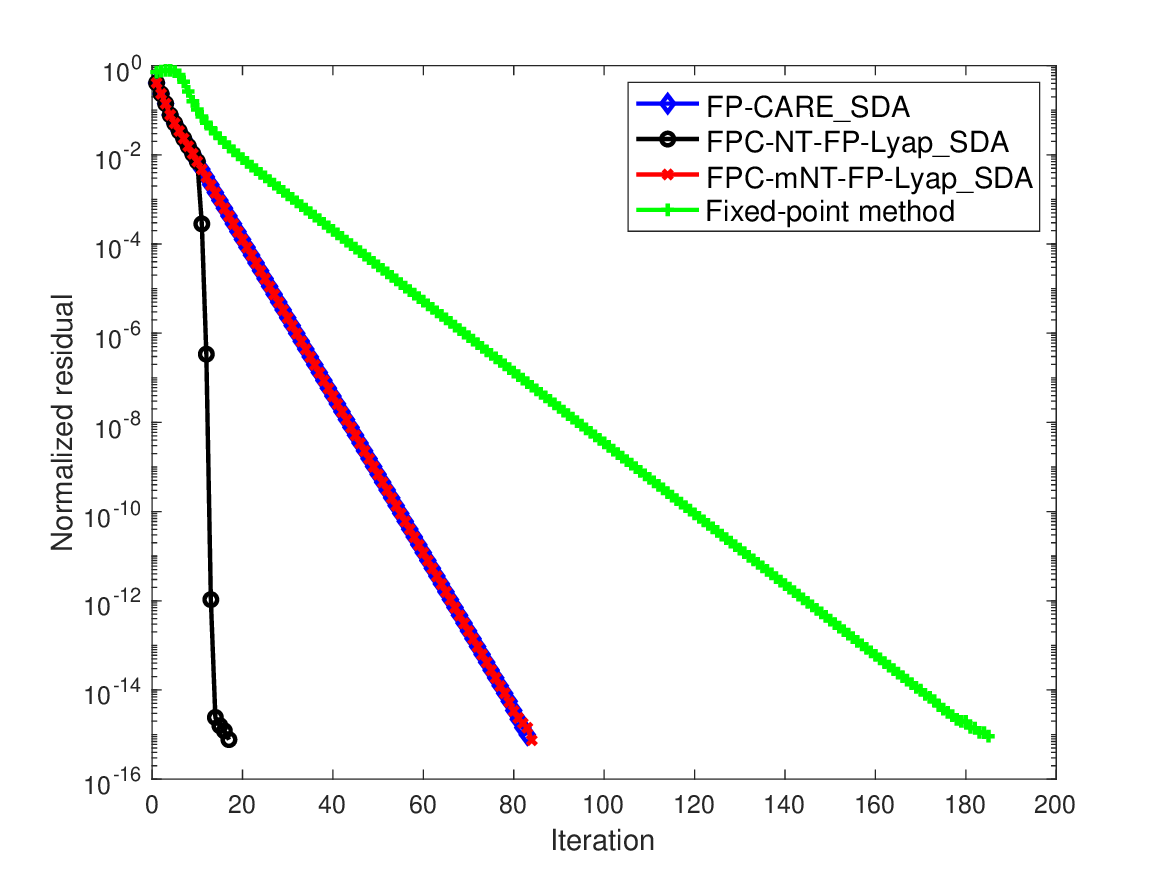}
\caption{Example~\ref{example:3}}
\label{fig:example3_convergence}
\end{subfigure}
\begin{subfigure}[b]{0.42\textwidth}
\center
\includegraphics[width=\textwidth]{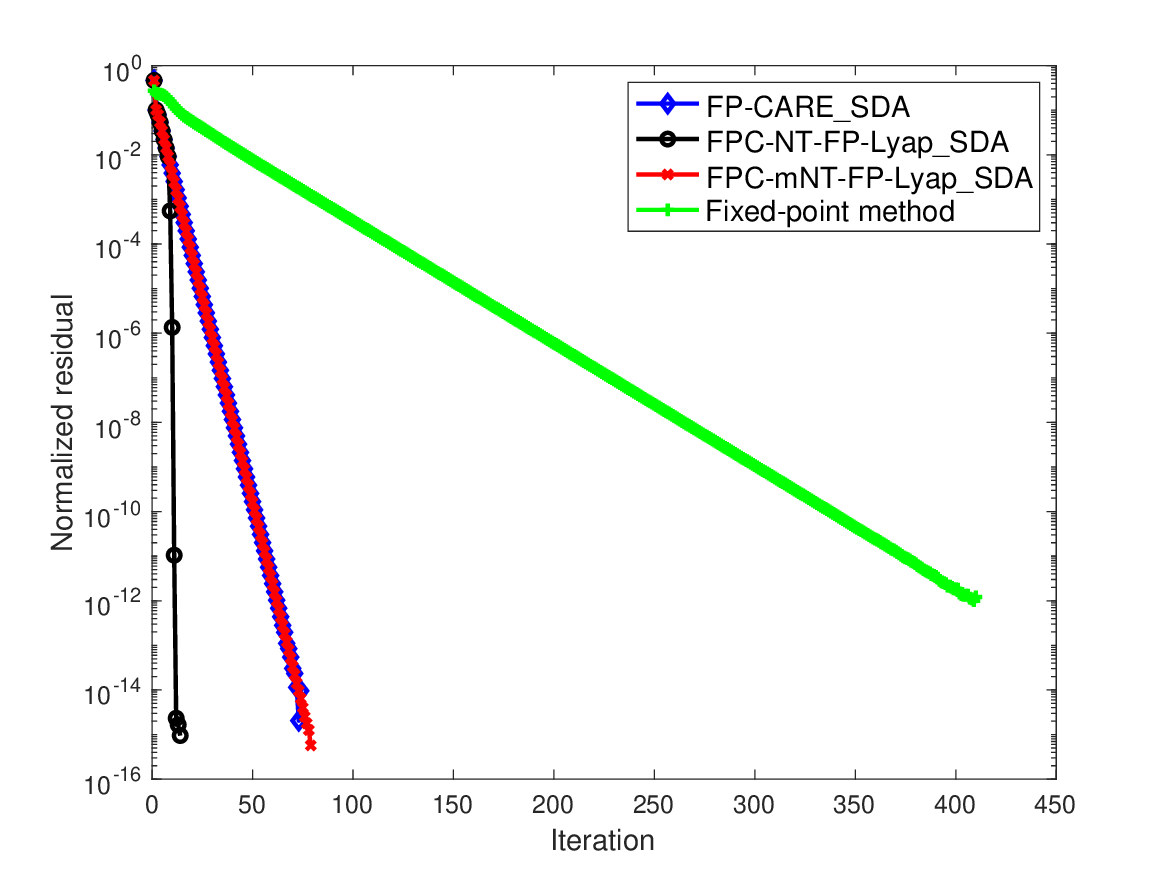}
\caption{Example~\ref{example:F16}}
\label{fig:exampleF16_convergence}
\end{subfigure}
\begin{subfigure}[b]{0.42\textwidth}
\center
\includegraphics[width=\textwidth]{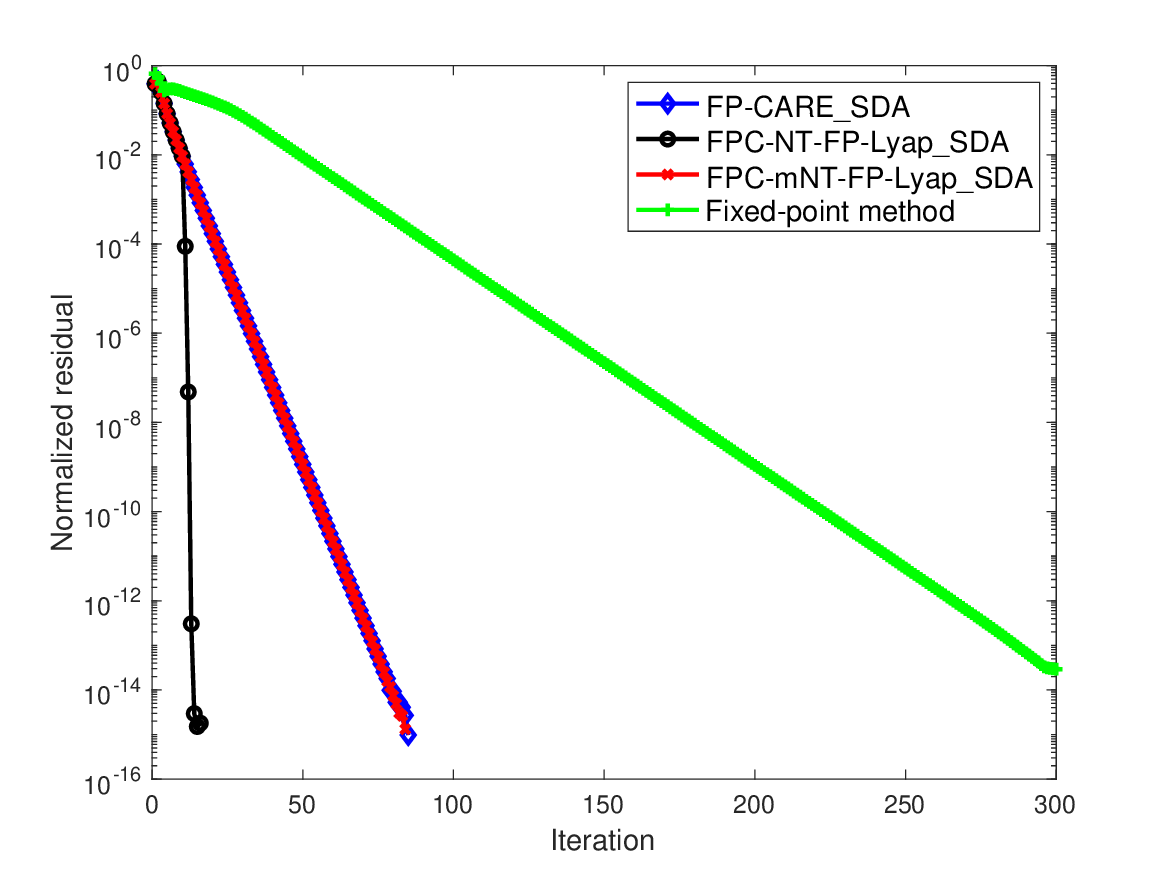}
\caption{Example~\ref{example:Quadrotor}}
\label{fig:exampleQuadrotor_convergence}
\end{subfigure}
%
\caption{
Normalized residual $\mbox{NRes}_k$ for FP-CARE\_SDA, FPC-NT-FP-Lyap\_SDA, FPC-mNT-FP-Lyap\_SDA and fixed-point method.}
\label{fig:1_k2_convergence}
\end{figure}

\begin{example}{\cite{cpws:2022}} \label{example:F16}
Finite-time SDRE of F16 aircraft flight control system \cite{cpws:2022} can be described as
\begin{align*}
      \dot{\mathbf{x}}\equiv \begin{bmatrix}\dot{u}\\
      \dot{v}\\
      \dot{w}\\
      \dot{p}\\
      \dot{q}\\
      \dot{r}\end{bmatrix} = & \begin{bmatrix}
           (g \sin\theta)/u & 0 & 0 & 0 & -w & v \\
           (-g\sin \phi \cos \theta)/u & 0 & 0 & w & 0 & -u \\
           (-g\cos \phi \cos \theta)/u & 0 & 0 & -v & u & 0 \\
           0 & 0 & 0 & c_1q/2 & (c_1p+c_2r)/2 & c_2 q/2 \\
           0 & 0 & 0 & c_3 p + c_4r/2 & 0 & c_4p/2 - c_3 r \\
           0 & 0 & 0 & c_5q/2 & (c_5p+c_6r)/2 & c_6 q/2 
      \end{bmatrix}\begin{bmatrix}u\\
      v\\
      w\\
      p\\
      q\\
      r\end{bmatrix}\\ &+\begin{bmatrix}
          1/m & 0 & 0 & 0 \\
          0 & 0 & 0 & 0 \\
          0 & 0 & 0 & 0 \\
          0 & c_{lp} & 0 & c_{np} \\
          c_{mq} Z_{TP} & 0 & c_{mq} & 0 \\
          0 & c_{lr} & 0 & c_{nr}
      \end{bmatrix} \begin{bmatrix}F_T\\
      L\\
      M\\
      N\end{bmatrix}
      \equiv A(\mathbf{x}) \mathbf{x} + B(\mathbf{x}) \mathbf{u},
\end{align*}
where $g$ is the gravity force, $m$ is the aircraft
mass, $(u, v, w)$, $(p, q, r)$ are the aircraft velocity and angular velocity vectors, respectively, for roll $\phi$, pitch $\theta$ and yaw $\psi$ angles, $c$'s parameters are the suitable combinations of coefficients of the aircraft inertial matrix, the control vector $\mathbf{u}=[F_T,L,M,N]^{\top}$ consists of the thrust, the resulted rolling, pitch and yawing moments from the layout of ailerons, elevators and rudders, $Z_{TP}$ is the position of the thrust point.

For a fixed state $\mathbf{x}$, the coefficient matrices of the SCARE \eqref{eq:SCARE_gen} with the optimization problem \eqref{eq:F_X*} has the forms
    \begin{align*}
        A &= \begin{bmatrix}
            3.958 \times 10^{-5}  &  0 &  0 & 0 & -5.866 & -6.985 \\
            2.116\times 10^{-4}  & 0 & 0 & 5.866  &  0  &  -84.66 \\
            -0.1158  & 0  & 0 & 6.985  & 84.66  &  0 \\
            0 & 0 & 0 & 1.791\times 10^{-4} & 4.303 \times 10^{-3}  &  -5.006 \times 10^{-3} \\
            0 & 0 & 0 & -5.329 \times 10^{-3} &  0  &  -4.259 \times 10^{-2} \\
            0 & 0 & 0 & -4.769 \times 10^{-3} &  3.253 \times 10^{-2} & -1.791 \times 10^{-4}
        \end{bmatrix},  \\ 
        B &= \begin{bmatrix}
            1.076 \times 10^{-4} & 0 & 0 & 0 \\
            0 & 0 & 0 & 0 \\ 
            0 & 0 & 0 & 0 \\
            0 & 7.780 \times 10^{-5} & 0 & 7.780 \times 10^{-5} \\
            3.964 \times 10^{-6} &  0  &   1.321 \times 10^{-5} & 0 \\
            0 & 1.211 \times 10^{-6} & 0 & 1.171 \times 10^{-5}
        \end{bmatrix},\\ 
        Q &= 5000 I_6, \quad R = 2 \times 10^{-4} I_4, \quad L = 0,\\
        A_0^i &= 0.012 \times i \times  \frac{\| A \|_{\infty}}{\| \widehat{A}_0^i \|_{\infty}} \widehat{A}_0^i \ \text{ with }\ \widehat{A}_0^i = \mbox{\texttt{wgn}}(6,6,100 \times i), \mbox{ for } i = 1, 2, 3, \\
        B_0^i &= 0.012 \times i \times  \frac{\| B \|_{\infty}}{\| \widehat{B}_0^i \|_{\infty}} \widehat{B}_0^i \ \text{ with }\ \widehat{B}_0^i = \mbox{\texttt{wgn}}(6,4,40 \times i), \mbox{ for } i = 1, 2, 3.
    \end{align*} 
\end{example}

\begin{example}{\cite{chhu:2022}} \label{example:Quadrotor}
    The SDRE optimal control design for quadrotors for enhancing robustness against unmodeled disturbances \cite{chhu:2022} can be described as in \eqref{eq:dyna_SSDC} and \eqref{eq:cost_fun} with
    $\mathbf{x} = [u, v, w, p, q, r, \phi, \theta, z_a]^{\top}$, $\mathbf{e} = \mathbf{x} - \mathbf{x}_f$, 
    \begin{align*}
        A(\mathbf{x}) &= \begin{bmatrix}
            0 & \frac{r}{2} & -\frac{q}{2} & 0 & -\frac{w}{2} & \frac{v}{2} & 0 & -\frac{g(\sin \theta)}{\theta} & 0 \\
            -\frac{r}{2} & 0 & \frac{p}{2} & \frac{w}{2} & 0 & -\frac{u}{2} & \frac{g(1+\cos \theta) \sin \phi}{2\phi} & -\frac{g(1-\cos \theta) \sin \phi}{2\theta} & 0 \\
            \frac{q}{2} & -\frac{p}{2} & 0 & -\frac{v}{2} & \frac{u}{2} & 0 & -\frac{2g}{\phi}\sin^2(\frac{\phi}{2})\cos^2(\frac{\theta}{2}) & -\frac{2g}{\theta} \sin^2(\frac{\theta}{2}) \cos^2(\frac{\phi}{2}) & \frac{g}{z_a} \\
            0 & 0& 0 & 0 & \frac{c_1r}{2} & \frac{c_1q}{2} & 0 & 0 & 0 \\
            0 & 0 & 0 & \frac{c_2r}{2} & 0 & \frac{c_2p}{2} & 0 & 0 & 0 \\
            0 & 0 & 0 & \frac{c_3q}{2} & \frac{c_3p}{2} & 0 & 0 & 0 & 0 \\
            0 & 0 & 0 & 1 & \frac{\sin \phi \tan \theta}{3} & \frac{\tan \theta(1 + \cos \phi)}{3} & \frac{\alpha_1}{3\phi} & \frac{\alpha_2}{3\theta} & 0 \\
            0 & 0 & 0 & 0 & \frac{1+\cos \phi}{2} & -\frac{\sin \phi}{2} & -\frac{\alpha_3}{2\phi} & 0 & 0 \\
            0 & 0 & 0 & 0 & 0 & 0 & 0 & 0 & -\eta
        \end{bmatrix}, \\
        B  &= \begin{bmatrix}
            0 & 0 & -\frac{1}{m} & 0 & 0 & 0 & 0 & 0 & 0 \\
            0 & 0 & 0 & \frac{1}{I_x} & 0 & 0 & 0& 0 & 0 \\
            0 & 0 & 0 & 0 & \frac{1}{I_y} & 0 & 0 & 0 & 0 \\
            0 & 0 & 0 & 0 & 0 & \frac{1}{I_z} & 0 & 0 & 0 
        \end{bmatrix}^{\top},\\ 
        Q &= \mbox{\rm diag}(2000, 2000, 3000, 10, 10, 100, 0, 0, 0), \ R = I_4, \ L = 0, \\
        A_0^i &= 0.025 \times i \times  \frac{\| A \|_{\infty}}{\| \widehat{A}_0^i \|_{\infty}} \widehat{A}_0^i\ \text{ with }\ \widehat{A}_0^i = \mbox{\texttt{wgn}}(9,9,10\times i), \mbox{ for } i = 1, 2, 3, \\ 
        B_0^i &= 0.01 \times i \times  \frac{\| B \|_{\infty}}{\| \widehat{B}_0^i \|_{\infty}} \widehat{B}_0^i \ \text{ with } \ 
      \widehat{B}_0^i = \mbox{\texttt{wgn}}(9,4,4\times i), \mbox{ for } i = 1, 2, 3, 
    \end{align*}
    where $\alpha_1 = q\tan \theta \sin \phi - r \tan \theta + r \tan \theta \cos \phi$, $\alpha_2 = q \tan \theta \sin \phi + r \tan \theta \cos \phi$, and $\alpha_3 = q(1-\cos \phi) + r \sin \phi$, 
    $g$ is the gravity force, $m$ is the quadrotor
mass, $(u,v,w)$ and $(p,q,r)$ are the velocity and the angular velocity on the body-fixed frame, respectively, for roll $\phi$, pitch $\theta$ and yaw $\psi$ angles,
    $z_a$ is a slow varying stable auxiliary variable governed by $\dot{z}_a = -\eta z_a$, $\eta > 0$, $c_1 = \frac{I_y - I_z}{I_x}$, $c_2 = \frac{I_z - I_x}{I_y}$, $c_3 =  \frac{I_x - I_y}{I_z}$, and $I_x$, $I_y$, $I_z$,  are initial parameters.
    
    For a fixed state $\mathbf{x}$ with $m = 1$,  $I_x = I_y = 0.01466$, and $I_z = 0.02848$, the coefficient matrix $A \equiv A(\mathbf{x})$ has the form
    \begin{small}
    \begin{align*}
        A &= \begin{bmatrix}
            0 & -8.208\mbox{e-}4  & -1.047\mbox{e-}2     &       0 & -1.234\mbox{e-}4 &  1.178  &          0 & -9.8000     &       0 \\
   8.208\mbox{e-}4     &       0  & -1.603\mbox{e-}3 &  1.234\mbox{e-}4    &        0  & 2.203\mbox{e-}2  & 9.800 & -5.436\mbox{e-}4     &       0 \\
   1.047\mbox{e-}2 &  1.603\mbox{e-}3    &        0 & -1.178 & -2.203\mbox{e-}2    &        0    &        0     &       0  &  9.820\mbox{e-}1 \\
            0     &       0     &       0     &       0  & 7.738\mbox{e-}4 &  -9.871\mbox{e-}3  &          0      &      0      &      0 \\
            0      &      0     &       0  & -7.738\mbox{e-}4    &        0  & -1.511\mbox{e-}3 &           0     &       0      &      0 \\
            0      &      0     &       0      &      0     &       0     &       0    &        0     &       0     &       0 \\
            0     &       0     &       0  & 1.000 &  1.386\mbox{e-}8 &  2.499\mbox{e-}4  & 2.617\mbox{e-}6 & -5.464\mbox{e-}4    &        0 \\
            0     &       0     &       0      &      0  & 1.000    &        0  & -9.650\mbox{e-}3      &      0      &      0 \\
            0    &        0    &        0     &       0       &     0      &      0    &        0       &     0  & -0.100 
        \end{bmatrix}.
    \end{align*}
    \end{small}

\end{example}

In Examples~\ref{example:6} -  \ref{example:Quadrotor}, we use $X_0 = 0$ as the initial matrix so that $\{ X_k \}_{k=0}^{\infty}$ is monotonically non-decreasing for the FP-CARE\_SDA algorithm.
The associated convergence of $\mbox{NRes}_k$ for these examples is presented in Figure~\ref{fig:1_k2_convergence}.  Numerical results show that
\begin{itemize}
    %
    %
    %
    \item It is well known that Newton's method (NT-FP-Lyap\_SDA algorithm) is highly dependent on the initial choice $X_0$. 
    When $X_0 = 0$ or $X_0\geq 0$ is randomly constructed, 
    NT-FP-Lyap\_SDA does not converge to $\widehat{X} \geq 0$ for these four examples. 
    However, in Figure~\ref{fig:1_k2_convergence}, after a few $m$ iterations of FP-CARE\_SDA for obtaining $X_m$ with $\| X_m - X_{m-1} \|_2 \leq 0.01$,  $X_m$ is closed to $\widehat{X}$ and Newton’s method with initial $X_m$ (i.e. FPC-NT-FP-Lyap\_SDA algorithm) has quadratic convergence. 
    \item Numerical results in Figure~\ref{fig:1_k2_convergence} show that the fixed-point method, FP-CARE\_SDA and FPC-mNT-FP-Lyap\_SDA algorithms have linear convergence. But, the iteration number of the fixed-point method is obviously larger than that of the other methods.
    Comparing the results in Figure~\ref{fig:1_k2_convergence} with those in Figure~\ref{fig:spectial_X0_convergence}, the iteration number of mNT-FP-Lyap\_SDA algorithm in Figure~\ref{fig:spectial_X0_convergence} is obviously larger than that of FP-CARE\_SDA algorithm. 
    As shown in \eqref{eq:equiv_mNT}, the quadratic term $(X-X_k)G_k(X-X_k)$ is small when $X_m$ is used as an initial matrix of mNT-FP-Lyap\_SDA algorithm.
    We can expect that the mNT-FP-Lyap\_SDA algorithm with initial $X_m$ (i.e. FPC-mNT-FP-Lyap\_SDA algorithm) has the same convergent behavior of the FP-CARE\_SDA algorithm as presented in Figure~\ref{fig:1_k2_convergence}. 
    Without such initial $X_m$ (i.e., with $X_0 = 0$ or a randomly initial $X_0$), mNT-FP-Lyap\_SDA algorithm will not converge for Examples~\ref{example:6} -  \ref{example:Quadrotor}. This demonstrates the importance and robustness of the FP-CARE\_SDA algorithm.
    %
    %
    \item For Examples~\ref{example:3} -  \ref{example:Quadrotor}, 
    FPC-mNT-FP-Lyap\_SDA algorithm produces monotonically non-decreasing sequences and has a linear convergence. The sequence produced by FPC-NT-FP-Lyap\_SDA algorithm does not contain any monotonic property. 
\end{itemize}

From the results in Figures~\ref{fig:spectial_X0_convergence} and \ref{fig:1_k2_convergence}, 
the FP-CARE\_SDA algorithm has a linear convergence. Moreover, the FP-CARE\_SDA algorithm also provides a good initial matrix for the NT-FP-Lyap\_SDA algorithm and the modified Newton's iteration in \eqref{eq:m_Newton_iter} so that the FPC-NT-FP-Lyap\_SDA and FPC-mNT-FP-Lyap\_SDA algorithms have quadratic and linear convergence, respectively. 
These two algorithms can not compute the solution $\widehat{X}$ without such an initial matrix. Therefore, our proposed FP-CARE\_SDA algorithm is a reliable and robust algorithm.

\subsection{Efficiency of algorithms} \label{subsec:efficiency}
In each iteration of the FP-CARE\_SDA (Algorithm~\ref{alg:SDA-CARE}), FPC-NT-FP-Lyap\_SDA (Algorithm~\ref{alg:CARE Newton fixed-point Lyapunov}), and FPC-mNT-FP-Lyap\_SDA (Algorithm~\ref{alg:CARE mNewton Lyapunov}), solving the CARE and Lyapunov equation are the most computationally intensive steps.  To compare the efficiency of these methods, we first show the total numbers of solving CARE \eqref{eq:kth-SARE_CARE}, Lyapunov equation \eqref{eq:Lyapunov}/\eqref{eq:m_Newton_iter}, and fixed-point iteration \eqref{eq:FP-SCARE} for each method in Table~\ref{tab:no_CARE_Lyap}. The third and fifth columns show that we only need a few iterations of the FP-CARE\_SDA algorithm to get a good initial matrix in FPC-NT-FP-Lyap\_SDA and FPC-mNT-FP-Lyap\_SDA algorithms. 

\begin{table}[h]
  \centering
  \begin{tabular}{|c|c|c|c|c|c|c|c|c|} \hline
    & FP-CARE\_SDA & \multicolumn{2}{ c| }{FPC-NT-FP-Lyap\_SDA} & \multicolumn{2}{ c| }{FPC-mNT-FP-Lyap\_SDA} & FP \\ \hline 
    & solving \eqref{eq:kth-SARE_CARE} & solving \eqref{eq:kth-SARE_CARE} & solving \eqref{eq:Lyapunov} & solving \eqref{eq:kth-SARE_CARE} & solving \eqref{eq:m_Newton_iter} & solving \eqref{eq:FP-SCARE} \\ \hline 
    %
    %
    Exa. \ref{example:6} & 14 & 3 & 16 & 3 & 11 & 86 \\
    %
    Exa. \ref{example:3} & 83 & 10 & 111 & 10 & 74 & 185 \\
    %
    %
    Exa. \ref{example:F16} &74 & 8 & 114 & 8 & 71 & 410 \\
    Exa. \ref{example:Quadrotor} & 85 & 10 & 107 & 10 & 74 & 300 \\ \hline
  \end{tabular}
  \caption{Number of solving CARE \eqref{eq:kth-SARE_CARE}, Lyapunov equation \eqref{eq:Lyapunov}/\eqref{eq:m_Newton_iter}, and fixed-point (FP) iteration \eqref{eq:FP-SCARE}.}
  \label{tab:no_CARE_Lyap}
\end{table}

\begin{table}[h]
  \centering
  \begin{tabular}{|c|c|c|c|c|c|c|c|c|} \hline
    & \multicolumn{4}{ c| }{CPU times (seconds)} \\ \cline{2-5} 
    & FP-CARE\_SDA & FPC-NT-FP-Lyap\_SDA & FPC-mNT-FP-Lyap\_SDA & FP \\ \hline 
    %
    %
    Exa. \ref{example:6} & 7.947e-1 & 8.112e-1& 5.729e-1  & 5.790  \\
    Exa. \ref{example:3} & 6.151e-3 & 4.632e-3 & 3.928e-3  & 9.634e-3\\
    %
    %
    Exa. \ref{example:F16} & 5.527e-3 & 4.897e-3 & 3.816e-3  & 2.041e-2 \\
    Exa. \ref{example:Quadrotor} & 1.217e-2 & 7.759e-3 & 6.627e-3  & 3.804e-2 \\ \hline
  \end{tabular}
  \caption{CPU times for solving SCARE \eqref{eq:Stoc-ARE-N}.}
  \label{tab:CPU_time}
\end{table}

\begin{itemize}
\item The fourth and sixth columns of Table \ref{tab:no_CARE_Lyap} show that the FPC-NT-FP-Lyap\_SDA algorithm still has a quadratic convergence, the convergence of the fixed-point iteration \eqref{eq:Lyapunov} for  the FPC-NT-FP-Lyap\_SDA algorithm is linear so that the total number of solving Lyapunov equations is larger than that for the FPC-mNT-FP-Lyap\_SDA algorithm. As shown in Table~\ref{tab:CPU_time}, the performance of the FPC-mNT-FP-Lyap\_SDA algorithm is better than that of the FPC-NT-FP-Lyap\_SDA algorithm. 
\item Sum of the numbers in the fifth and sixth columns of Table~\ref{tab:no_CARE_Lyap} is almost equal to the number in the second column. However, the computational cost for solving CARE \eqref{eq:kth-SARE_CARE} by SDA in \cite{hull:2018} is larger than for solving Lyapunov equation~\eqref{eq:m_Newton_iter} by L-SDA in Algorithm~\ref{alg:Lyapunov_SDA}. This indicates that the FPC-mNT-FP-Lyap\_SDA algorithm outperforms the FP-CARE\_SDA algorithm as shown in the comparison of the CPU time in Table~\ref{tab:CPU_time}. 
\item The dimension of the coefficient matrix for the linear system of the fixed-point iteration in \eqref{eq:FP-SCARE} is $n(r+1)$. Compared to the other three algorithms, in which matrix dimensions are $n$, the fixed-point iteration needs more computational cost to solve the linear systems. On the other hand, as shown in Figures~\ref{fig:spectial_X0_convergence}-\ref{fig:1_k2_convergence} and the last column in Table~\ref{tab:no_CARE_Lyap}, the fixed-point iteration needs a large iteration number to get the solution. These lead to the performance of the fixed-point iteration being worse than that of the other three algorithms as shown in Table~\ref{tab:CPU_time}. 
\end{itemize}

Summarized above results, we can see that the FPC-mNT-FP-Lyap\_SDA algorithm outperforms the FP-CARE\_SDA, FPC-NT-FP-Lyap\_SDA algorithms, and fixed-point iteration. 

\section{Conclusions}
We propose a reliable FP-CARE\_SDA for solving the SCARE \eqref{eq:SCARE_gen} and prove  its monotonically non-decreasing convergence with $X_0 = 0$, its monotonically non-increasing convergence with $\mathcal{R}(X_0) \leq 0$ and $A_0 -G_0X_0$ being stable.  
Furthermore, we propose the mNT-FP-Lyap\_SDA algorithm to be used to accelerate the convergence with the FP-CARE\_SDA as a robust initial step and prove its monotonically non-decreasing convergence under the assumption that the resulting $X_k \geq 0$.
Numerical experiments of real-world practical applications from the 3D missile/target engagement, the F16 aircraft flight control, and the quadrotor optimal control show the robustness of our proposed FPC-mNT-FP-Lyap\_SDA algorithm.

\section{Appendix}
In \cite{guli:2023}, the authors proposed a fixed-point iteration to solve stochastic discrete-time algebraic Riccati equations (SDAREs) and proved the convergence of the fixed-point iteration. Moreover, the M\"{o}bius transformation is applied to transform the SCARE \eqref{eq:Stoc-ARE-N} into a SDARE so that the proposed fixed-point iteration can be applied to solve the associated solution $X$. Now, we state the fixed-point iteration for solving SCARE. Let 
\begin{align*}
      \widehat{A} = A - B R^{-1} L^{\top}, \quad \widehat{B} = B R^{-1/2}, \quad \widehat{C}^{\top} \widehat{C} = Q - L R^{-1} L^{\top},
\end{align*}
$\Pi$ and $\widehat{\Pi} \in \mathbb{R}^{n(r+1) \times n(r+1)}$ be the permutations satisfying 
\begin{align*}
     \Pi^{\top}( X \otimes I_r) \Pi &= I_r \otimes X, \\
     \mbox{diag}\left( X, X \otimes I_r \right) &= \widehat{\Pi}^{\top} \left( X \otimes I_{r+1} \right) \widehat{\Pi}, 
\end{align*}
and
\begin{align*}
    \mathcal{A} = \Pi \left( \begin{bmatrix}
          A_0^1 \\ \vdots \\ A_0^r 
    \end{bmatrix} - \begin{bmatrix}
          B_0^1 \\ \vdots \\ B_0^r
    \end{bmatrix} R^{-1} L^{\top}\right), \quad \mathcal{B} = \Pi \begin{bmatrix}
          B_0^1 \\ \vdots \\ B_0^r
    \end{bmatrix} R^{-/12}.
\end{align*}
Give $\gamma > 0$ so that $\widehat{A}_{\gamma} \equiv \widehat{A} - \gamma I_n$ is nonsingular. Define $Z_{\gamma} = \widehat{C} \widehat{A}_{\gamma}^{-1} \widehat{B}$. By M\"{o}bius transformation, we have
\begin{subequations} \label{eq:mtx_FP}
\begin{align}
      E_{\gamma} &= \widehat{\Pi}\begin{bmatrix}
           \widehat{A}_{\gamma} + 2 \gamma I_n + \widehat{B} Z_{\gamma}^{\top} \widehat{C} \\
           \sqrt{2\gamma} (\mathcal{A} + \mathcal{B} Z_{\gamma}^{\top}\widehat{C})  
      \end{bmatrix} (I_n + \widehat{A}_{\gamma}^{-1} \widehat{B}Z_{\gamma}^{\top} \widehat{C})^{-1} \widehat{A}_{\gamma}^{-1} \in \mathbb{R}^{n(r+1) \times n}, \\
      H_{\gamma} &= 2 \gamma \widehat{A}_{\gamma}^{-\top} \widehat{C}^{\top} ( I_{\ell} + Z_{\gamma} Z_{\gamma}^{\top})^{-1} \widehat{C} \widehat{A}_{\gamma}^{-1} \in \mathbb{R}^{n \times n}, \\
      G_{\gamma} &= \widehat{\Pi} \begin{bmatrix}
            \sqrt{2\gamma} \widehat{A}_{\gamma}^{-1} \widehat{B} \\
            \mathcal{A} \widehat{A}_{\gamma}^{-1}\widehat{B} - \mathcal{B}
      \end{bmatrix} ( I_m + Z_{\gamma}^{\top} Z_{\gamma})^{-1} \begin{bmatrix}
            \sqrt{2\gamma} \widehat{A}_{\gamma}^{-1} \widehat{B} \\
            \mathcal{A} \widehat{A}_{\gamma}^{-1} \widehat{B} - \mathcal{B}
      \end{bmatrix}^{\top} \widehat{\Pi}^{\top} \in \mathbb{R}^{n(r+1) \times n(r+1)}.
\end{align}
\end{subequations}
Then, the fixed-point iteration in \cite{guli:2023} is given as
\begin{align}
     X_1 & = H_{\gamma}, \nonumber \\
     X_{k+1} &= E_{\gamma}^{\top}(X_k \otimes I_{r+1}) \left(I_{n(r+1)} + G_{\gamma}(X_k \otimes I_{r+1})\right)^{-1} E_{\gamma} + H_{\gamma} \label{eq:FP-SCARE}
\end{align}
for $k = 1, 2, \ldots$.

Each iteration in \eqref{eq:FP-SCARE} only needs to compute $(X_k \otimes I_{r+1}) \left(I_{n(r+1)} + G_{\gamma}(X_k \otimes I_{r+1})\right)^{-1} E_{\gamma}$  by the direct method. However, the dimension of the coefficient matrix is enlarged to $n(r+1) \times n(r+1)$. It needs to have more and more computational cost as $r$ is large. On the other hand, the iteration can not preserve the symmetric property of the solution.

\section*{Acknowledgments}
This work was partially supported by the National Science and Technology Council (NSTC), the National Center for Theoretical Sciences, and the Nanjing Center for Applied Mathematics.  T.-M. Huang, Y.-C. Kuo, and W.-W. Lin were partially supported by
NSTC 110-2115-M-003-012-MY3, 110-2115-M-390-002-MY3 and 112-2115-M-A49-010-, respectively.


\bibliographystyle{abbrv}
\bibliography{strings,research_papers}

\end{document}